\newcommand{\sphere}{\real/\integer}
\newcommand{\real}{\mathbb{R}}
\newcommand{\integer}{\mathbb{Z}}
\newcommand{\NN}{\mathbb{N}}
\newcommand{\KK}{\mathcal{K}(n, \varepsilon)}
\newcommand{\Kk}[1]{\mathcal{K}_{#1}(n, \varepsilon)}
\newcommand{\Kp}[1]{\mathcal{K}'_{#1}(n, \varepsilon)}
\newcommand{\nonneg}{\mathbb{N}}
\newtheorem{thm}{Theorem}[section]
\newtheorem{cor}[thm]{Corollary}
\newtheorem{conj}[thm]{Conjecture}
\newtheorem{lem}[thm]{Lemma}
\newtheorem{prop}[thm]{Proposition}
\newtheorem{ques}[thm]{Question}
\theoremstyle{defn}
\newtheorem{defn}[thm]{Definition}
\theoremstyle{remark}
\newtheorem{rem}[thm]{Remark}
\numberwithin{equation}{section}
\begin{document}

\author[H.~Wen]{Haomin Wen$^+$} \address{ Department of
  Mathematics, University of Pennsylvania, Philadelphia, PA 19104-6395
USA} \email{weh@math.upenn.edu} \thanks{Supported in part by NSF grant DMS 10-03679}
\title{Simple Riemannian surfaces are scattering rigid}
\maketitle

\begin{abstract}
Scattering rigidity of a Riemannian manifold allows one to
tell the metric of a manifold with boundary by looking at the
directions of geodesics at the boundary. Lens rigidity allows one to
tell the metric of a manifold with boundary from the same information
plus the length of geodesics. There are a variety of results about
lens rigidity but very little is known for scattering rigidity. We will
discuss the subtle difference between these two types of rigidities
and prove that they are equivalent for two-dimensional simple manifolds
with boundaries. In particular, this implies that two-dimensional
simple manifolds (such as the flat disk) are scattering rigid since
they are lens/boundary rigid (Pestov--Uhlmann, 2005).
\end{abstract}

\section{Introduction}
\subsection{The invisible Eaton lens}
The invisible Eaton lens \cite{eaton-1952, kerker-1969, hannay-haeusser-1993} is a gradient-index (GRIN) lens that looks like the vacuum from the outside,
but has an infinite refractive index at the center.
The refractive index $n$ of the invisible Eaton lens is given by
\begin{align*}
    \sqrt{n} = \frac{1}{n r} + \sqrt{\frac{1}{n^2 r^2} - 1}.
\end{align*}
One can think the Eaton lens as a Riemannian manifold 
with the conformally flat metric $n^2 g_0$ on the unit disk.
The metric has a singularity at the center,
and the trajectories of light will be geodesics in that Riemannian manifold.

As can be seen from Figure \ref{fig:eaton},
the direction of each light ray when entering the lens is
the same as the direction of the light ray when leaving the lens.
Hence there is no refraction visible from the outside,
(even though each light ray makes a complete circuit inside the Eaton lens,)
and thus this Eaton lens is invisible.

\begin{figure}[h]
    \center
    \includegraphics[width=0.20 \textwidth]{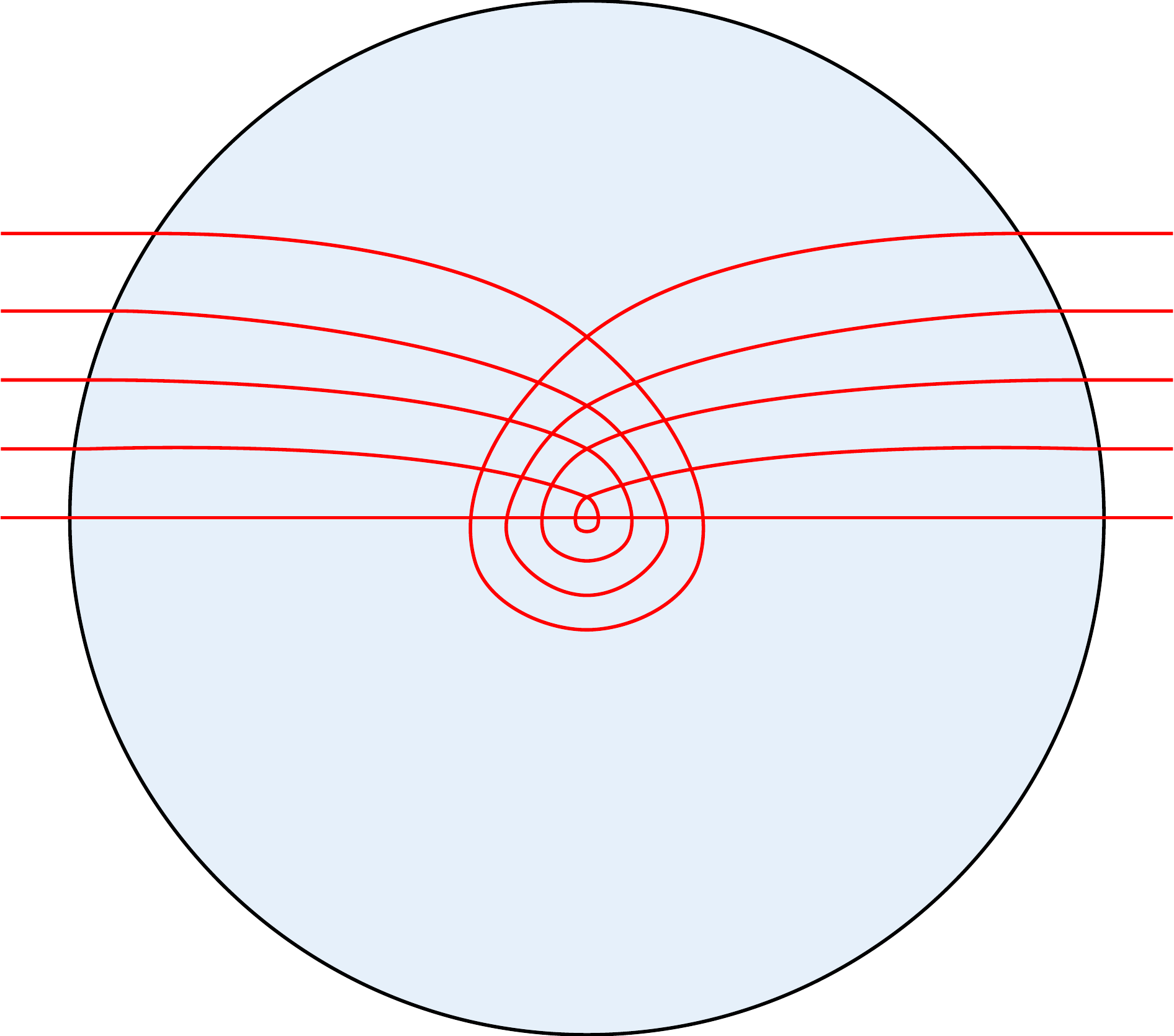}
    \caption{Trajectories of light in an invisible Eaton lens}
    \label{fig:eaton}
\end{figure}

\begin{ques}
  Can we have an invisible lens without singularities?
  \label{ques:inv}
\end{ques}

\subsection{Scattering rigidity and lens rigidity}
Question \ref{ques:inv} is equivalent to asking if flat balls are scattering rigid.
Simply put,
a Riemannian manifold $M$ is scattering rigid if $M$ is determined by its scattering data (see below) up to isometries which leave the boundary fixed.

Let $\pi : \Omega M \rightarrow M$ be the unit tangent bundle of $M$
and $\Omega_x M$ be the set of unit tangent vectors at $x$ for any $x \in M$.
Let $\partial \Omega M$ be the boundary of the unit tangent bundle of $M$.
In other words, $\partial \Omega M = \bigcup_{x \in \partial M} \Omega_x M$.
For each $x \in \partial M$,
let $\nu_M(x)$ be the unit normal vector of $M$ pointing inwards at $x$.
Then put
$\partial_+ \Omega_x M = \{X \in \Omega_x M: (X, \nu_M(x))_{g_M} > 0\}$,
$\partial_0 \Omega_x M = \{X \in \Omega_x M: (X, \nu_M(x))_{g_M} = 0\}$,
and $\partial_- \Omega_x M = \{X \in \Omega_x M: (X, \nu_M(x))_{g_M} < 0\}$.
Also,
write
$\partial_+ \Omega M = \bigcup_{x \in \partial M} \partial_+ \Omega_x M$, 
$\partial_0 \Omega M = \bigcup_{x \in \partial M} \partial_0 \Omega_x M$, 
and $\partial_- \Omega M = \bigcup_{x \in \partial M} \partial_- \Omega_x M$.

For each $X \in \partial_+ \Omega M$,
there is a geodesic $\gamma_X$ whose initial tangent vector is $X$.
Extend the geodesic as long as possible until it touches the boundary $\partial M$ again.
Let $\tau_X := \ell(\gamma_X)$, the length of $\gamma_X$.

If the geodesic $\gamma_X$ is of finite length,
call its tangent vector at the other end point $\alpha_M(X)$.
(See Figure \ref{fig:scattering_relation}.)
The map $\alpha_M : \partial_+ \Omega M \rightarrow \partial \Omega M$
defined above is called the \emph{scattering relation} of $M$.
Note that $\alpha_M(X)$ will be undefined if $\gamma_X$ is of infinite length.
\begin{figure}[h]
    \center
    \includegraphics[width=0.25\textwidth]{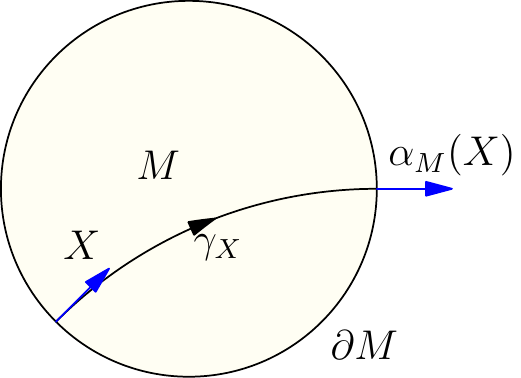}
    \caption{The scattering map $\alpha_M$}
    \label{fig:scattering_relation}
\end{figure}

Suppose that we have two Riemannian manifolds $(M, g_M)$, $(N, g_N)$
and an isometry $h : \partial M \rightarrow \partial N$ between their boundaries.
Then there is a natural bundle map $\varphi : \partial \Omega M \rightarrow \partial \Omega N$ defined as 
\begin{align}
  \varphi(a X + b \nu_M(x)) = a h_*(X) + b \nu_N(h(x))
  \label{eq:phi}
\end{align}
for any unit vector $X$ based at $x$ tangent to $\partial M$ and real numbers $a$ and $b$ such that $a^2 + b^2 = 1$.
$M$ and $N$ are said to have the same \emph{scattering data rel $h$} if $\varphi \circ \alpha_M = \alpha_N \circ \varphi$.
If we also have $\ell(\gamma_X) = \ell(\gamma_{\varphi(X)})$,
then we say $M$ and $N$ have the same \emph{lens data rel $h$}.
\begin{defn}
  We say a Riemannian manifold $M$ is \emph{scattering rigid} (resp. \emph{lens rigid}) if for any Riemannian manifold $N$ which has the same scattering data as $M$ (resp. lens data) rel $h$,
  (where $h : \partial M \rightarrow \partial N$ is an isometry,)
  we can always extend $h$ to an isometry from $M$ to $N$.
\end{defn}
We will omit ``rel $h$'' when $h$ is clear from the context or the specific choice of $h$ does not matter.
\begin{ques}[Equivalent to Question \ref{ques:inv}]
  Are flat balls scattering rigid?
\end{ques}
\begin{rem}
  Theorem \ref{thm:simple} (below) shows that 2-D flat disks are scattering rigid. Flat balls (of any dimensions) are known (Gromov \cite{gromov1983filling}) to be lens rigid.
\end{rem}
\subsection{Simple manifolds}
\begin{defn}
  A compact Riemannian manifold with boundary is \emph{simple}
  if
  \begin{enumerate}
    \item its boundary is strictly convex,
    \item there is a unique minimizing geodesic connecting any pair of points on the boundary,
    \item the manifold has no conjugate points.
  \end{enumerate}
\end{defn}
\begin{rem}
  Note that simple manifolds are topological balls.
\end{rem}
\begin{conj}[Michel \cite{michel-1981}]
  Simple Riemannian manifolds are lens (boundary) rigid.
  \label{conj:simple}
\end{conj}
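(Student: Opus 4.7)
My plan is to reduce lens rigidity to boundary rigidity (recovery of the metric from the boundary distance function $d_M \colon \partial M \times \partial M \to \mathbb{R}$) and then to attack boundary rigidity via integral geometry. For a simple manifold every pair of boundary points is joined by a unique minimizing geodesic, so the exit time $\tau_X$ coincides with $d_M(\pi(X), \pi(\alpha_M(X)))$, and the scattering relation $\alpha_M$ is recovered from the tangential gradients of $d_M|_{\partial M \times \partial M}$ together with strict convexity; conversely $d_M|_{\partial M \times \partial M}$ is recovered from lens data by minimizing $\tau_X$ over incoming directions. Hence it suffices to show that $d_M|_{\partial M \times \partial M}$ determines $(M, g_M)$ up to a boundary-fixing isometry, which is Michel's original formulation.

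Given a second simple manifold $(N, g_N)$ and an isometry $h \colon \partial M \to \partial N$ preserving the boundary distance function, I would first construct a candidate diffeomorphism $F \colon M \to N$ extending $h$. Strict convexity of $\partial M$ makes the boundary-normal chart $(t, y) \mapsto \exp_y(t \nu_M(y))$ diffeomorphic onto a collar of $\partial M$; pairing this with the analogous chart for $N$ gives $F$ near the boundary. Since $M$ and $N$ are topological balls and have no conjugate points, one can extend $F$ globally by interpolating with geodesic polar coordinates from an interior reference point. The heart of the argument is then to show $F^* g_N = g_M$.

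The standard strategy is to linearize. For a one-parameter family $g_s$ of simple metrics sharing the boundary distance function, the variation $\dot g = \partial_s g_s|_{s=0}$ integrates to zero along every boundary-to-boundary geodesic, so $I_2 \dot g = 0$, where $I_2$ is the geodesic X-ray transform on symmetric $2$-tensors. Solenoidal injectivity of $I_2$ then forces $\dot g = \mathcal{L}_V g$ for some vector field $V$ vanishing on $\partial M$, which is gauged away by the flow of $V$ to yield infinitesimal rigidity. A continuity argument in the space of simple metrics with fixed boundary distance function then upgrades this to nonlinear rigidity.

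The main obstacle is solenoidal injectivity of $I_2$ for simple manifolds of dimension $\geq 3$: this is itself an open problem, known only under supplementary hypotheses such as real-analyticity, nonpositive curvature, or a suitable strictly convex foliation by hypersurfaces. A secondary obstacle is the passage from infinitesimal to nonlinear rigidity, which requires controlling the deformation space of simple metrics with a prescribed boundary distance function and ruling out degenerations to non-simple geometries. These are precisely the reasons Michel's conjecture remains open in general; in dimension two the full chain has been completed by Pestov and Uhlmann, and it is this two-dimensional input that the present paper will use to derive scattering rigidity from lens rigidity.
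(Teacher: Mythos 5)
The statement you were asked to prove is labeled as a \emph{conjecture} in the paper (Michel's conjecture), and the paper offers no proof of it: it is cited as an open problem, with Pestov--Uhlmann (Theorem~\ref{thm:pu}) supplying the two-dimensional case and the present paper building only on that two-dimensional result. Your write-up correctly recognizes this. You lay out the standard integral-geometric strategy (reduce lens rigidity to boundary rigidity via the scattering relation and exit times; build a candidate diffeomorphism from boundary-normal coordinates; linearize to the geodesic X-ray transform $I_2$ on symmetric $2$-tensors; invoke solenoidal injectivity to get infinitesimal rigidity; upgrade by a continuity/deformation argument), and you correctly identify the two genuine gaps that keep this from being a proof in dimension $\ge 3$: solenoidal injectivity of $I_2$ for general simple manifolds is itself open, and the passage from infinitesimal to nonlinear rigidity requires control of the deformation space that is not currently available. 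Declining to paper over these gaps is the right call; a ``proof'' that silently assumed either would be wrong. The only thing worth adding is that the reduction in your first paragraph (lens data $\Leftrightarrow$ boundary distance function for simple manifolds) is standard and correct, and it is exactly the sense in which the paper treats ``lens rigid'' and ``boundary rigid'' as interchangeable for simple manifolds.
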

\begin{thm}
  [Pestov--Uhlmann \cite{pestov-uhlmann-2005}]
    Simple Riemannian surfaces are lens (boundary) rigid.
    \label{thm:pu}
\end{thm}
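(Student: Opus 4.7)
The plan is to reduce lens rigidity to boundary rigidity and then exploit techniques specific to two-dimensional inverse problems. First I would observe that on a simple surface the lens data and the boundary distance function $d_g|_{\partial M \times \partial M}$ carry equivalent information: every pair of boundary points is joined by a unique minimizing geodesic whose length and endpoint tangent vectors are recorded in the lens data, while conversely the boundary distance function together with its tangential gradients recovers those tangent vectors (and hence the scattering relation). So it suffices to show that the boundary distance function determines a simple Riemannian surface up to boundary-fixing isometry.

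Next, using the boundary isometry $h$ I would pull $g_N$ back to $M$ so that both metrics live on the same topological disk with the same induced boundary metric and the same boundary distance function. In isothermal coordinates each metric becomes a conformal multiple of a common background, and the problem splits naturally into identifying the conformal class and then fixing the conformal factor. To pin down the conformal class I would recover the Dirichlet-to-Neumann map of the Laplace-Beltrami operator $\Delta_g$ from the scattering relation, using holomorphic integrating factors along geodesics built from the fiberwise Hilbert transform on the unit tangent bundle; the Lassas-Uhlmann theorem on the 2D Calder\'on problem then determines the conformal class. With both metrics now in a common conformal class, the conformal factor is pinned down by the boundary distance function via injectivity of the geodesic X-ray transform on functions on simple surfaces (Mukhometov).

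The main obstacle will be extracting the Dirichlet-to-Neumann map from the purely geometric scattering relation. This is the technical heart of the Pestov--Uhlmann argument and relies on a surjectivity property of the fiberwise Hilbert transform that is special to dimension two, together with a careful analysis of how holomorphic functions on the Riemann surface $(M, g)$ interact with the geodesic flow. One must show that every harmonic function with prescribed boundary trace arises as an appropriate fiber integral of a function transported by the geodesic flow, and that this transport depends only on data encoded in $\alpha_M$. Once this bridge is built, the remaining conformal and X-ray transform steps are comparatively standard.
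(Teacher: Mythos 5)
This is a cited result (Pestov--Uhlmann, 2005); the present paper does not prove it, so there is no internal proof to compare against. Your outline is nevertheless a reasonably faithful high-level reconstruction of the actual Pestov--Uhlmann argument. The chain you describe --- equivalence of lens data and boundary distance function on a simple surface, reduction in isothermal coordinates to recovering a conformal class and then a conformal factor, recovery of the Dirichlet-to-Neumann map of $\Delta_g$ from the scattering relation via the fiberwise Hilbert transform and holomorphic integrating factors, the Lassas--Uhlmann theorem to fix the conformal class, and Mukhometov's injectivity of the geodesic X-ray transform on functions to fix the conformal factor --- is exactly their route, and you correctly identify the DN-map recovery as the technical heart.

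What your sketch leaves as a black box, and where a full proof would require real work, is precisely that step: one needs the Pestov--Uhlmann commutator identity relating the geodesic vector field on $\Omega M$ to the fiberwise Hilbert transform, plus the solvability and smoothness theory for the transport equation $Xu = -f$ with boundary data on $\partial_+\Omega M$, and a surjectivity statement for the map sending boundary data to boundary traces of fiberwise-averaged solutions. You gesture at all of this, but nothing in the proposal shows that the holomorphic integrating factors exist with the required regularity or that the resulting operator from boundary traces is onto the range needed to reconstruct the DN map; those are the two lemmas a referee would demand. As written this is a correct roadmap through the published proof rather than an independent argument, which is a fine thing to present for a theorem the paper only cites, but it should not be mistaken for a self-contained proof.
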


\begin{rem}
  A Riemannian manifold is \emph{boundary rigid} if its metric is determined by the distance function between boundary points.
  The above statements are originally about boundary rigidity,
  which is equivalent to lens rigidity when the manifold is simple.
  Theorem \ref{thm:pu} confirms the conjecture for surfaces.
  There are a variety of results in higher dimensions (Besson--Courtois--Gallot \cite{MR1354289}, Burago--Ivanov \cite{burago2013area, MR2630062}, Croke--Kleiner \cite{MR1645381}, Michel \cite{michel-1981}),
  but it is still largely open.
\end{rem}

Our result extends Theorem \ref{thm:pu} to scattering rigidity.

\begin{thm}
    Simple Riemannian surfaces are scattering rigid.
    \label{thm:simple}
\end{thm}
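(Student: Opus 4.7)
The plan is to use Pestov--Uhlmann's result (Theorem \ref{thm:pu}) as a black box, so it suffices to prove the following: if two simple Riemannian surfaces $M$ and $N$ have the same scattering data rel $h$, then they automatically have the same lens data rel $h$. That is, I want to show $\tau_X = \tau_{\varphi(X)}$ for every $X \in \partial_+ \Omega M$, and then Theorem \ref{thm:pu} extends $h$ to an isometry $M\to N$.

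The engine of the reduction is the classical first variation of arc length. Parametrize $\partial_+\Omega M$ by $(x,\theta)$, where $x$ is an arc-length coordinate on $\partial M$ (which is a circle, since simple surfaces are topological disks) and $\theta\in(0,\pi)$ is the angle that $X$ makes with a chosen oriented tangent direction on $\partial M$. Writing $\alpha_M(x,\theta)=(x',\theta')$, the first variation formula reads
\[
d\tau_M \;=\; \cos\theta'\, dx' \;-\; \cos\theta\, dx
\]
as a 1-form on $\partial_+\Omega M$, with $x'$ and $\theta'$ considered as functions of $(x,\theta)$ via $\alpha_M$. Because $h$ is an isometry and $\varphi$ preserves the angle with $\partial M$ by its very definition \eqref{eq:phi}, the right-hand side is the $\varphi$-pullback of its counterpart on $\partial_+\Omega N$. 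Combined with the hypothesis $\alpha_N\circ\varphi = \varphi\circ\alpha_M$, this yields $d\tau_M = d(\tau_N\circ\varphi)$ on $\partial_+\Omega M$.

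Connectedness of $\partial_+\Omega M\cong S^1\times(0,\pi)$ then forces $\tau_M - \tau_N\circ\varphi \equiv c$ for a single constant $c$. To pin down $c=0$, I would let $\theta\to 0^+$: by strict convexity of $\partial M$, a geodesic entering at a grazing angle returns to $\partial M$ almost immediately, so $\tau_M(x,\theta)\to 0$, and likewise $\tau_N(\varphi(x,\theta))\to 0$. Hence $c=0$, lens data agree, and Theorem \ref{thm:pu} does the rest.

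The main difficulty I expect is bookkeeping of signs: $\theta'$ is the exit angle with respect to an orientation of $\partial M$ near $x'$, and because $\alpha_M(X)\in\partial_-\Omega M$ rather than $\partial_+\Omega M$ one must be careful that the sign of $\cos\theta'$ in the variation formula transports correctly under $\varphi$ (both on the base point and on the angle). A secondary technical point is rigorously checking $\tau\to 0$ as $\theta\to 0^+$, which relies essentially on strict convexity of $\partial M$ and on working in dimension two, where $\partial_+\Omega M$ is just two-dimensional. Beyond those, the argument is a clean generating-function observation: the genuine mathematical depth sits inside Pestov--Uhlmann rather than in this reduction.
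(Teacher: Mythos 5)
There is a genuine gap, and it is precisely the gap the paper was written to close. Your reduction to the first variation formula is correct as far as it goes: it shows that $\tau_N\circ\varphi - \tau_M$ is a constant $L$ on $\partial_+\Omega M$ (the paper says the same thing). The problem is the claim that $\tau_N(\varphi(x,\theta))\to 0$ as $\theta\to 0^+$. You justify this by ``strict convexity of $\partial M$,'' but that only controls $\tau_M$. To conclude the same for $\tau_N$ you would need $\partial N$ to be strictly convex (or at least that grazing geodesics in $N$ are short), and this is \emph{not} given: in the definition of scattering rigidity, $N$ is an arbitrary compact Riemannian manifold with boundary isometric to $\partial M$ and the same scattering relation --- you are not allowed to assume $N$ is simple. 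Convexity of the boundary is a local property, and the paper points out (via the invisible Eaton lens) that local scattering data do not determine it: the Eaton lens has a totally geodesic boundary with closed geodesics running along it, so $\tau_N\to L>0$ is exactly what one must rule out, not something one can read off the first variation. Figure \ref{fig:ct} in the paper gives a pair with identical scattering data but different lens data, showing the implication ``same scattering data $\Rightarrow$ same lens data'' genuinely fails without using simplicity of $M$ in a deeper way than the first variation.

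The actual proof accepts that $L\ge 0$ may be positive and shows $L>0$ leads to a contradiction. If $L>0$, then for each boundary-tangent direction $Y\in\partial_0\Omega N$ the grazing geodesics limit to a closed geodesic $\gamma_Y$ of length $L$ tangent to $\partial N$. The paper first shows (Proposition \ref{prop:disk}) that $N$ is a topological disk, then studies the projectivized unit tangent vector field $P\circ\tilde\gamma_Y$ as a knot in the solid torus $P\Omega N$. Proposition \ref{prop:trivial} shows this knot is null-homotopic; the torus swept out by these curves over $x\in\partial N$ is shown to be compressible, yielding Proposition \ref{prop:trivial'} that $P\circ\tilde\gamma_Y$ is isotopically trivial. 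On the other hand, Theorem \ref{thm:knot} (and its refinement \ref{thm:knot_2}), proved via a new family of knot invariants $W_g$ built from crossing data of immersed plane curves, shows the projectivized unit tangent field of an immersed curve without self-tangencies is always a nontrivial knot. The contradiction forces $L=0$, after which Pestov--Uhlmann applies as you intended. So your ``generating function'' observation is step one of the paper, but the ``genuine mathematical depth'' is not all inside Pestov--Uhlmann: Sections \ref{topology}--\ref{proof} and the appendix are needed to rule out the $L>0$ scenario that your proposal silently assumes away.
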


\begin{rem}
  Simple Riemannian manifolds do not have trapped geodesics
  and trapped geodesics often make this type of rigidity problems much harder.
  Amazingly,
  the first (and the only one before this one) known result (Croke \cite{croke-2011}) of scattering rigidity
  is for the flat product metric on $\mathbb{S} \times D^n$,
  which has trapped geodesics.
\end{rem}

To get Theorem \ref{thm:simple} from Theorem \ref{thm:pu},
it suffices to show that $M$ and $N$ have the same lens data if they have the same scattering data, assuming that $M$ is simple.
Note that this is not true in general without the assumption that $M$ is simple. (See Figure \ref{fig:ct}.)
\begin{figure}[h]
    \centering
    \begin{subfigure}[b]{0.12\textwidth}
      \centering
      \includegraphics[width=\textwidth]{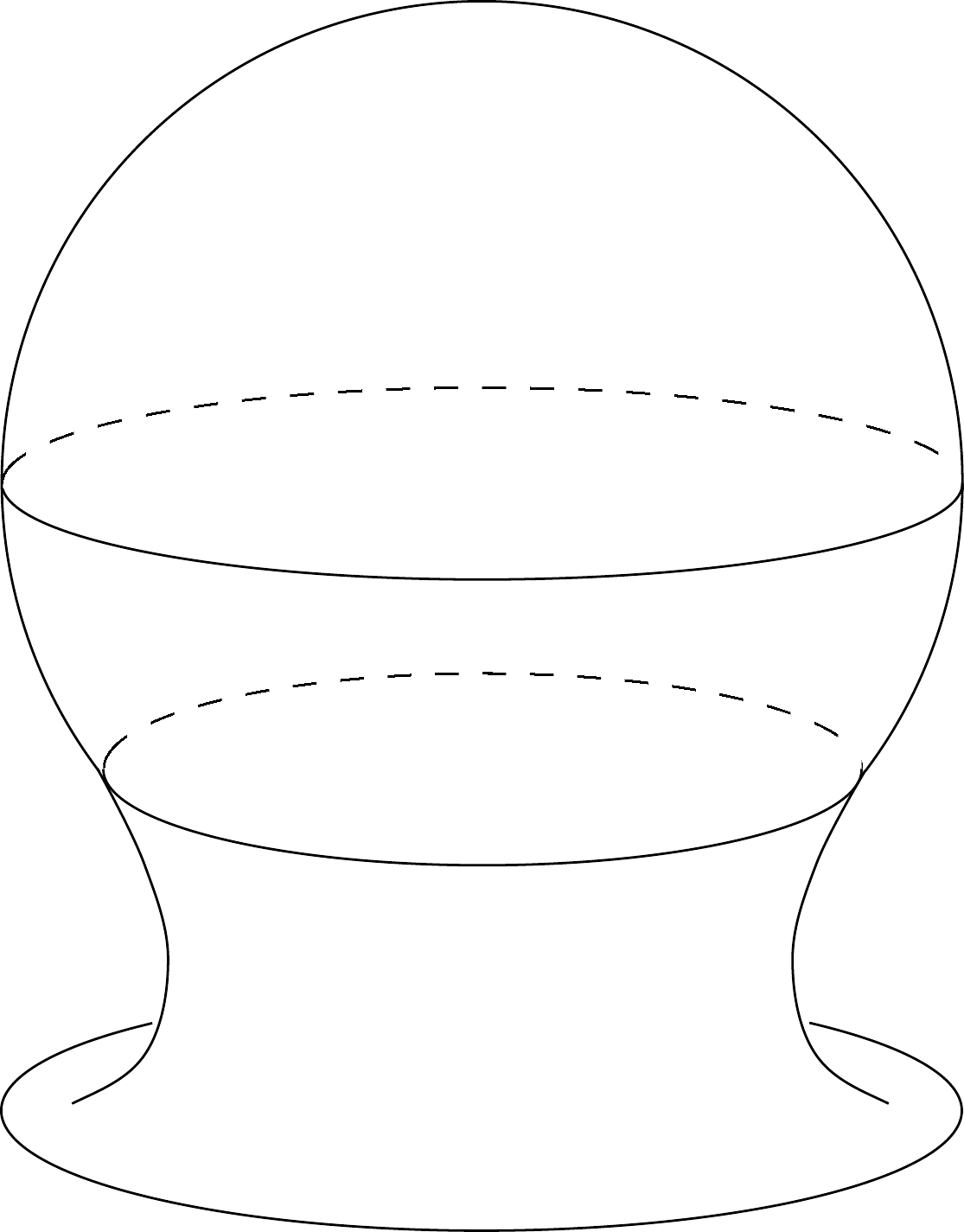}
      \caption{}
      \label{c1}
    \end{subfigure}%
    \quad
    \quad
    \quad
    \quad
    \begin{subfigure}[b]{0.12\textwidth}
      \centering
      \includegraphics[width=\textwidth]{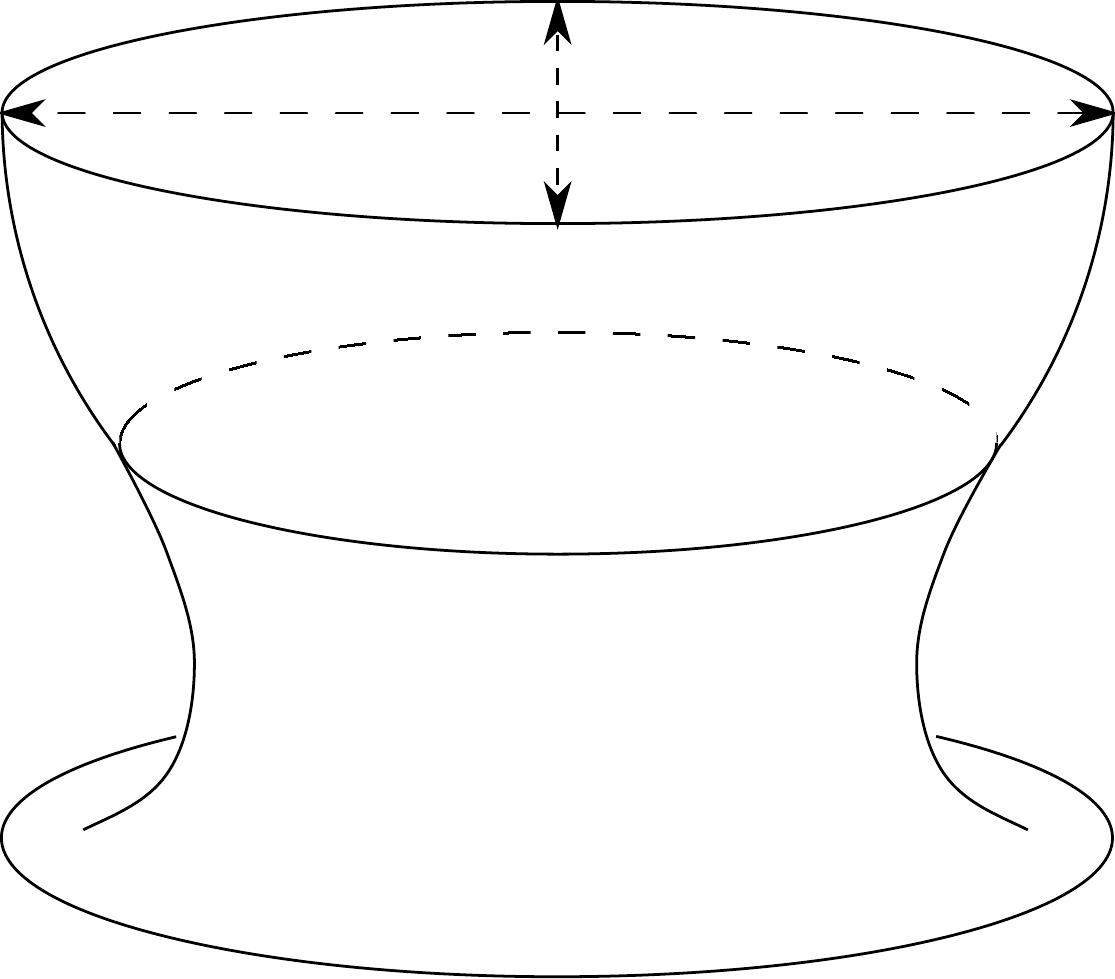}
      \caption{}
      \label{c2}
    \end{subfigure}
    \caption{\ref{c2} is obtained from \ref{c1} by removing the upper hemisphere and identifying antipodal points in the top boundary component. \ref{c1} and \ref{c2} have the same scattering data but different lens data.}
    \label{fig:ct}
\end{figure}

By the first variation of arc length, $\ell(\gamma_{\varphi(X)}) - \ell(\gamma_X)$ is equal to a constant $L \ge 0$.
If $L > 0$, then $\gamma_{\varphi(X)}$ converges to a closed geodesic of length $L$ as $X$ converges to a vector $X_0$ tangent to the boundary. (See Figure \ref{fig:closed}.) We will call this closed geodesic $\gamma_{X_0}$.
\begin{figure}[h]
  \centering
  \begin{subfigure}[b]{0.20\textwidth}
    \centering
    \includegraphics[width=\textwidth]{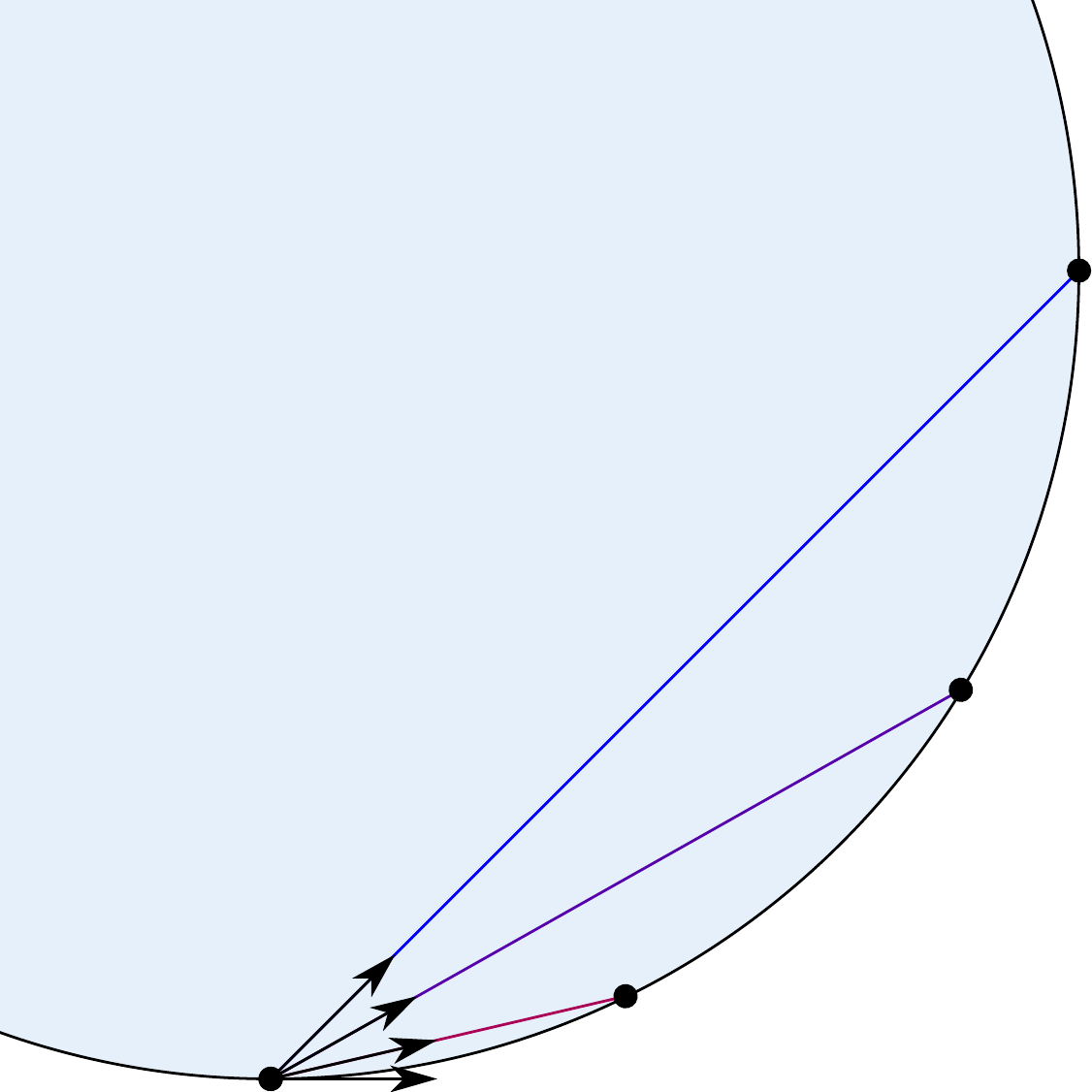}
    \caption{$\gamma_X$ in $M$}
  \end{subfigure}%
  \quad
  \begin{subfigure}[b]{0.20\textwidth}
    \centering
    \includegraphics[width=\textwidth]{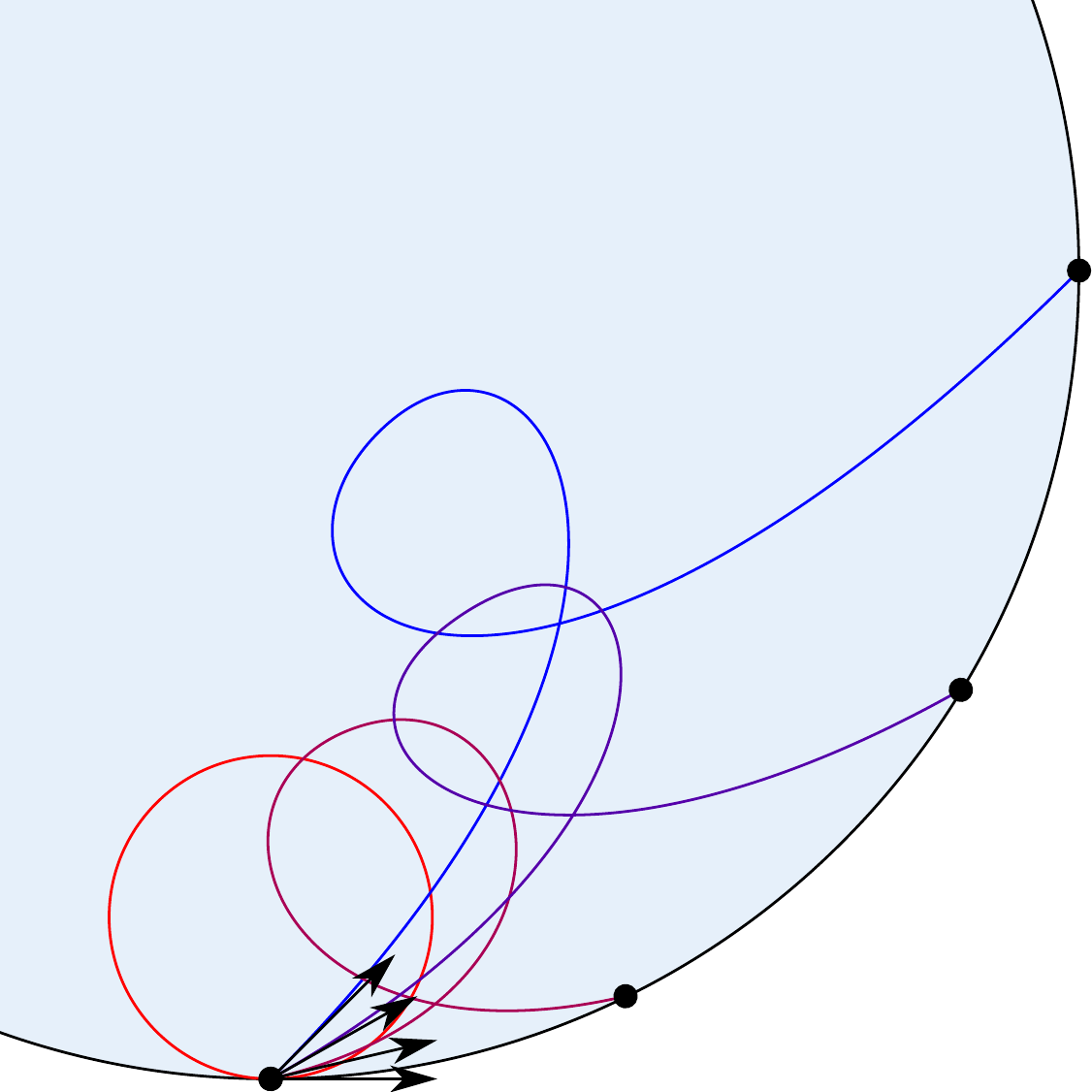}
    \caption{$\gamma_{\varphi(X)}$ in $N$}
  \end{subfigure}
  \caption{Closed geodesics?}
  \label{fig:closed}
\end{figure}

At first glance, this is very unlikely to happen,
since one expects $\partial N$ to be convex
as $\partial M$ is convex.
However,
the convexity of the boundary of a manifold,
being a local property,
is \textbf{not} determined by local scattering data
as illustrated by the invisible Eaton lens.
The boundary of the invisible Eaton lens is actually totally geodesic,
and we have closed geodesics running along the boundary.
The trickiest part of the proof is to get rid of these closed geodesics
using knot theory, (which only works in dimension $2$ so far).

\subsection{Scheme of the proof}
As explained in the previous section,
we need to close the gap between lens rigidity and scattering rigidity, that is, to show $L = 0$. Recall that $L = \ell(\gamma_{\varphi(X)}) - \ell(\gamma_X)$, the difference between the lengths of corresponding geodesics in $M$ and $N$, where $M$ and $N$ are two Riemannian manifolds with the same scattering data rel $h : \partial M \rightarrow \partial N$.

In section \ref{topology}, we will prove that $N$ is homeomorphic a disk.

Pick any $x \in \partial N$.
If $L > 0$,
then there is a closed geodesic $\gamma_x$ of length $L$ which is
tangent to $\partial N$ at $x$.
There are two such closed geodesics for each $x$,
but we can choose $\gamma_x$ properly such that
$\gamma_x$ moves continuously as $x$ moves.
In this section,
we will assume that $\gamma_x$ has multiplicity $1$.
The actual proof will be more complicated due to the possibility of higher multiplicities,
but the idea of the proof is the same.

The paper will study the isotopy type of the \emph{projectivized unit tangent vector field}
$P \circ \tilde\gamma_x : \sphere \rightarrow P \Omega N$ of $\gamma_x$ where
$\tilde\gamma_x : \sphere \rightarrow \Omega N$ is the unit tangent vector field of $\gamma_x$ (see \eqref{eq:unit_tangent} in section \ref{sec:knot}),
$P \Omega N = \Omega N / \{(x, \xi) \sim (x, -\xi)\}$ is the \emph{projectivized unit tangent bundle} of $N$,
and $P : \Omega N \rightarrow P \Omega N$ is the corresponding quotient map.

In section \ref{sec:knot}
we shall define a family of knot invariants
for contractible knots embedded in $P \Omega N$,
and then use those invariants to prove Theorem \ref{thm:knot},
which is interesting on its own.
\begin{thm}
    $P \circ \tilde{\gamma}$ is an isotopically non-trivial knot in $P \Omega N$
    for any smooth immersed curve $\gamma : \sphere \rightarrow N$ without self-tangencies.
    \label{thm:knot}
\end{thm}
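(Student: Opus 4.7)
The plan is to split on the free homotopy class of $P\circ\tilde\gamma$ in $\pi_1(P\Omega N)$. Since $N$ is a topological disk by section \ref{topology}, it is parallelizable and $P\Omega N$ is a solid torus, making $\pi_1(P\Omega N)\cong \integer$ with generator the fiber class. If $\gamma$ has turning number $n$ in any global frame on $N$, then $\tilde\gamma$ winds $n$ times around the fiber of $\Omega N$; the fiberwise 2-fold cover $\Omega N \to P\Omega N$ doubles winding numbers, so $P\circ\tilde\gamma$ represents $2n\in\integer$. Whenever $n\neq 0$, the knot $P\circ\tilde\gamma$ is not null-homotopic in $P\Omega N$ and therefore cannot be isotopic to any knot contained in a coordinate ball, which settles this case.

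For $n=0$, the Whitney--Graustein theorem rules out the possibility that $\gamma$ is embedded (embedded closed curves in a disk have turning number $\pm 1$), so $\gamma$ must possess at least one transverse self-intersection. Here I would invoke the knot invariants for contractible knots constructed in section \ref{sec:knot}. The natural candidate to try first is a linking-number invariant: the preimage of $P\circ\tilde\gamma$ under the double cover $\Omega N \to P\Omega N$ is the two-component link $\tilde\gamma \sqcup (-\tilde\gamma)\subset \Omega N$, whose components are embedded and disjoint precisely because $\gamma$ has no self-tangencies (direct or reverse) and are both null-homologous because $n=0$, so $\operatorname{lk}(\tilde\gamma,-\tilde\gamma)$ is well-defined. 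Any isotopically trivial knot in $P\Omega N$ bounds a disk, lifts to a split unlink, and has zero linking, so showing $\operatorname{lk}(\tilde\gamma,-\tilde\gamma)\neq 0$ suffices.

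This linking number can be computed as a signed sum of local contributions at the double points of $\gamma$, with the sign at each double point determined by the angle between the two tangent directions (well defined because, by hypothesis, these angles neither coincide nor differ by $\pi$). The main obstacle I foresee is ruling out cancellation of these signs. I expect a Whitney-style identity, relating the turning number, the signed double-point count, and this linking number, to force nontriviality when $n=0$ and at least one double point is present; configurations in which the raw linking vanishes would be caught by the other members of the family of invariants in section \ref{sec:knot} (presumably finer lifts in the spirit of Arnold's $J^\pm$/$St$ invariants). Verifying isotopy-invariance of these combinatorial quantities under the generic codimension-one singularities of one-parameter families of immersed curves — triple points must be crossed freely, while self-tangencies, where the invariants may jump, are forbidden by hypothesis — is the other point at which care will be required.
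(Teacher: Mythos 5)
There is a genuine gap, and also a misreading of the setup. First, Theorem~\ref{thm:knot} is stated for an arbitrary surface $N$; the paper explicitly redefines $N$ at the start of section~\ref{sec:knot} to be any Riemannian surface, orientable or not. You instead assume $N$ is a disk (citing section~\ref{topology}, which concerns the particular $N$ from the rigidity problem, not the $N$ of this theorem), and your computation of $\pi_1(P\Omega N)\cong\integer$ relies on that. Proposition~\ref{prop:embedded} in the paper is what handles embedded $\gamma$ in full generality, treating $\mathbb{S}^2$ and $\real P^2$ separately from all other surfaces via the fibration exact sequence.

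The more serious gap is in the contractible (turning number zero) case. You propose the total linking number $\operatorname{lk}(\tilde\gamma,-\tilde\gamma)$ and then, rightly, worry about cancellation; but you stop at ``I expect a Whitney-style identity'' plus a vague appeal to Arnold-style refinements, which is precisely the hard part. The paper closes this gap with two concrete ideas you do not have. First, it refines the count by \emph{type}: each crossing of $\beta=P\circ\tilde\gamma$ is assigned the unoriented free homotopy class $g$ of its smoothing, giving a family of invariants $W_g$ that are verified to be isotopy-invariant via Reidemeister moves in a polygonal model (Theorem~\ref{thm:inv}). Second, it observes that for $\beta=P\circ\tilde\gamma$ \emph{every} crossing is positive (Lemma~\ref{lem:positive}), so there is no cancellation at all within any fixed type $g$; what remains is to exhibit a single crossing of nontrivial type. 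This is done by tracking $\gamma$ from a basepoint until its first self-intersection at $(p,q)$; the loop $\gamma|_{[p,q]}$ is then a simple closed curve with one corner, and smoothing the corner shows the type of that crossing is the class of the unit tangent field of an \emph{embedded} closed curve, which is nontrivial by Proposition~\ref{prop:embedded}. Hence $W_g(\beta)\ge 1$ while $W_g(\text{unknot})=0$. Without the type-$g$ refinement, the positivity lemma, and the innermost-crossing trick, your argument does not establish nontriviality when the raw linking number is zero.
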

\begin{rem}
  Theorem \ref{thm:knot} is purely knot-theoretical as it involves nether scattering data nor lens data. It is a bit surprising that this simple fact was not known before even for plane curves.
  Actually, it would be a completely different story if the projectivization were dropped: Chmutov--Goryunov--Murakami \cite{MR1776110} showed that every knot type in $\Omega \mathbb{R}^2$ (including the trivial type) is realized by the unit tangent vector field along an immersed plane curve.
\end{rem}

Notice that the union of $P \circ \tilde\gamma_x$ for all $x \in \partial N$ is
a torus immersed in $P \Omega N$.
We can perturb the immersion to an embedding.
Then we can prove that the torus is compressible by showing that $P \circ \tilde\gamma_x$
is contractible.
(Actually,
any embedded torus in $P \Omega N$ is compressible.)
Next, we can show that the other generator of the fundamental group of the torus
is not contractible in $P \circ \tilde\gamma_x$.
It follows that $P \circ \tilde\gamma_x$ bounds an embedded disk,
which contradicts Theorem \ref{thm:knot}.
Therefore, there is no such closed geodesics.

In the actual proof, we shall prove Theorem \ref{thm:simple} in section \ref{proof}
using a similar contradiction without the assumption on the multiplicity.

\subsection{Acknowledgements}
Many thanks to my advisor Christopher Croke for introducing me this subject
and teaching me the techniques in this field.
Many ideas in this paper stem from discussions with him.
\section{Topology of $N$}
\label{topology}
Through out the paper (except in section \ref{sec:knot}), $M$ and $N$ will be two Riemannian surfaces with the same scattering data rel $h : \partial M \rightarrow \partial N$
where $h$ is an isometry.
Also, $M$ is assumed to be simple.
$\varphi : \partial \Omega M \rightarrow \partial \Omega N$
is the induced bundle map defined in \eqref{eq:phi}.
We aim to prove the following result in this section

\begin{prop}
  $N$ is homeomorphic to a 2-disk if $M$ is simple.
  \label{prop:disk}
\end{prop}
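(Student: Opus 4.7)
The plan is to combine the scattering-data hypothesis with the simplicity of $M$ to show that every geodesic of $N$ starting from $\partial N$ exits in uniformly bounded time, and then use this fact to rule out any non-disk topology for $N$.

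The first step is to establish the bound on exit times. Since $M$ is simple, $\alpha_M$ is defined on all of $\partial_+\Omega M$ and is a homeomorphism onto $\partial_-\Omega M$. The scattering-data identity $\alpha_N \circ \varphi = \varphi \circ \alpha_M$ then forces $\alpha_N = \varphi \circ \alpha_M \circ \varphi^{-1}$ to be a homeomorphism $\partial_+\Omega N \to \partial_-\Omega N$. In particular, for every $X \in \partial_+\Omega N$ the geodesic $\gamma_X$ returns to $\partial N$ at finite time $\tau_N(X)$, and $\tau_N$ is continuous on the compact closure $\overline{\partial_+\Omega N}$, hence bounded above by some $T > 0$.

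Observe next that $\partial N = h(\partial M) \cong S^1$ is a single circle, so $N$ is a compact connected $2$-manifold with exactly one boundary circle. By the classification of compact surfaces, $N$ is a disk if and only if $\pi_1(N) = 1$. Suppose instead that $\pi_1(N) \neq 1$. Then by minimizing length over a nontrivial free-homotopy class (handling separately the possibility that the infimum is realized on $\partial N$, for instance by perturbing into the interior when $\partial N$ is totally geodesic), $N$ must contain a closed geodesic $\sigma$ lying in its interior. A spiraling/continuity argument for the geodesic flow near $\sigma$ would then produce a sequence of entry vectors $X_n \in \partial_+\Omega N$ with $\tau_N(X_n) \to \infty$, contradicting the uniform bound $\tau_N \le T$. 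Hence $\pi_1(N) = 1$ and $N$ must be a disk.

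The hard part is the spiraling argument. Because $\partial N$ need not be strictly convex, as the Eaton lens illustrates, one cannot simply invoke the usual picture of geodesics spiraling into an interior closed orbit from the boundary side. A more careful analysis of the geodesic flow in a tubular neighborhood of $\sigma$, together with control of the Poincar\'e return map on a transversal crossing $\sigma$, will be needed to produce entry vectors whose forward orbits accumulate on $\sigma$ for arbitrarily long times before returning to $\partial N$; this is where I expect the main technical work of the section to lie.
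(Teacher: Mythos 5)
Your approach---bound the exit time uniformly via the scattering relation, argue that a non-disk $N$ would carry an interior closed geodesic, then produce boundary entry vectors whose exit time blows up near that geodesic---is a genuine departure from the paper's proof, which is homological. The paper shows that the homology class $g_C$ of the tangent closed geodesic is trivial by decomposing a length-minimizing representative of a based loop in $\pi_1(N,p_0)$, $p_0 \in \partial N$, into boundary arcs and interior geodesic arcs: the scattering data force each interior arc (with one explicit exception handled via simplicity of $M$) to begin and end tangent to $\partial N$, hence to be a multiple of the closed geodesic of length $L$; after deleting those arcs one is left with a multiple of $\partial N$, and using Proposition~\ref{prop:square} one concludes that $H_1(N)$ is cyclic when $g_C \neq 0$, forcing $N$ to be a M\"obius strip, which is then excluded by Proposition~\ref{prop:not_mo} (whose proof, incidentally, is in the same dynamical spirit as yours, comparing $\tau$ on a double cover).

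There are two real gaps in your proposal. The spiraling step you flag is not a formality: an interior closed geodesic $\sigma$ does not automatically yield $X_n \in \partial_+\Omega N$ with $\tau_N(X_n) \to \infty$, because the vectors near the lift of $\sigma$ whose orbits actually reach $\partial N$ might not accumulate on $\sigma$ at all---the trapped set around $\sigma$ can have nonempty interior in $\Omega N$, so one must genuinely argue that some exiting orbit lingers near $\sigma$ for arbitrarily long times. The second gap is upstream and more damaging: extracting an interior closed geodesic from a nontrivial free-homotopy class is exactly the step that fails when $\partial N$ is not strictly convex, which is the whole point of the scattering-versus-lens distinction. When $L > 0$ the boundary is tangent to closed geodesics (and may even be totally geodesic, as in the Eaton lens); a length-minimizing closed curve in a nontrivial class will generically be a concatenation of boundary arcs with geodesic arcs tangent to $\partial N$, or one of the tangent closed geodesics itself, and none of these can be ``perturbed into the interior'' while remaining geodesic. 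So you cannot assume the minimizer lies in the interior; the paper's decomposition of based loops into boundary pieces and tangent-geodesic pieces is precisely the device needed to work around this obstruction.
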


If $L := \ell(\gamma_{\varphi(X)}) - \ell(\gamma_X) = 0$, then
$M$ and $N$ have the same lens data,
and hence $N$ is a 2-disk.
Thus we shall assume that $L > 0$ in this section.

Pick a point $p_0 \in \partial N$,
and let $\beta_1 : [0, 1] \rightarrow \partial N$
be a constant speed closed curve of multiplicity $1$,
starting and ending at $p_0$.
There are two such curves corresponding to different orientations
but either one is fine.

Fix an orientation of $\partial N$ and let $Y_0(x)$
be the unit vector tangent to $\partial N$ at $x \in \partial N$
such that $Y_0(x)$ and $\partial N$ have the same orientation.
Define $\beta_x : [0, 1] \rightarrow N$ as
\begin{align*}
    \beta_x(t) = \gamma_{Y_0(x)} (L t),
\end{align*}
where $\gamma_{Y_0(x)}$ is the closed unit speed geodesic tangent to $Y_0(x)$ of length $L$.
Write $\beta_2 = \beta_{p_0}$.

For any loop $\beta$ in $N$ based at $p \in N$,
we will denote by $[\beta]_p$
the based homotopy class of $\beta$.
Also, let $h : \pi_1(N, p) \rightarrow H_1(N, \integer)$
be the abelianization map
which sends based homotopy classes to corresponding homology classes.
We will write $[\beta] := h([\beta]_p)$.
\begin{prop}
  $[\beta_1]_{p_0} = [\beta_2]_{p_0}^{-2}$.
  \label{prop:square}
\end{prop}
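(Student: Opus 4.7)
The plan is to exhibit the relation by realizing $\beta_1$ and $\beta_2$ (up to orientation) as the boundary and core of an immersed M\"obius band in $N$. Since $[\partial M] = 2[\text{core}]$ in $\pi_1(M) = \integer$ of a M\"obius band $M$, pushing forward by the immersion (with the core freely homotopic to $\beta_2^{-1}$) will give $[\beta_1]_{p_0} = 2[\beta_2^{-1}]_{p_0} = [\beta_2]_{p_0}^{-2}$ in $\pi_1(N, p_0)$.

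The source of this M\"obius band comes from the family map $H\colon \partial N \times [0,1] \to N$ defined by $H(x, s) = \beta_x(s)$. Since $\beta_x(0) = \beta_x(1) = x$, this descends to the torus $T := \partial N \times \sphere$, and the induced map $\bar H\colon T \to N$ sends the longitude $\{s = 0\}$ to $\beta_1$ and the meridian $\{x = p_0\}$ to $\beta_2$. The key local observation is that along $\{s = 0\}$ both partial derivatives $\partial_x \bar H$ and $\partial_s \bar H$ are parallel to $Y_0$ (both tangent to $\partial N$), so $\bar H$ has rank $1$ there; after the change of coordinates $u = x + Ls$, $v = s$, it locally looks like a standard fold $(u, v) \mapsto (u, c v^2)$. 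Points just inside $\partial N$ therefore have two preimages in $T$: one with $s$ small (the ``outgoing'' arc of some $\beta_{x_1}$) and one with $s$ close to $1$ (the ``incoming'' arc of $\beta_{x_2}$ for a nearby $x_2$). This yields a locally defined involution $\sigma\colon (x, s) \mapsto (x + 2Ls + O(s^2),\, 1 - s)$ on $T$ near $\{s = 0\}$, with fixed set exactly this circle.

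I would then extend $\sigma$ globally and identify the quotient $T/\sigma$ as a M\"obius band $M$ with boundary $\partial N$. Under the resulting branched $2$-to-$1$ cover $T \to M$, the longitude of $T$ (mapping via $\bar H$ to $\beta_1$) represents twice the core of $M$ in $\pi_1$; combined with an orientation comparison between the core and $\beta_2$ (which accounts for the minus sign), this gives the required $[\beta_1]_{p_0} = [\beta_2]_{p_0}^{-2}$. The main obstacle is this third step: verifying that $\sigma$ extends globally to a genuine involution and that its quotient is a M\"obius band, as opposed to an annulus (which would arise from an additional fold/branch circle in the interior) or a Klein bottle (which would arise from a free involution). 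This should ultimately rest on the multiplicity-$1$ hypothesis on the closed geodesics $\gamma_x$ and on ruling out additional caustics in the family $\{\beta_x\}$.
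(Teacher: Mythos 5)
Your M\"obius-band picture is a nice heuristic for the exponent $-2$, but the gap you flag in the third step is genuine and more than a technicality. Extending $\sigma$ to a global involution of $T$ is possible in the abstract (any fixed-point-free involution of the circle bounding the fold collar extends to a free involution of the complementary annulus), but the resulting $\sigma$ has no canonical relation to $\bar H$ away from the collar. In particular there is no reason for $\bar H$ to be $\sigma$-invariant, hence no reason for $\bar H$ to factor through the quotient $T/\sigma$ --- and without that factorization you cannot push the M\"obius-band relation ``boundary $= 2\,\cdot$ core'' forward into $\pi_1(N)$. Concretely, the closed geodesics $\beta_x$ can intersect one another arbitrarily in the interior of $N$, so $\bar H$ is not generically two-to-one onto its image; it only looks like a branched double cover near $\{s=0\}$. (The multiplicity-$1$ assumption you invoke at the end is also a red herring here: it is neither needed nor used for this proposition.)

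The paper's proof takes a shorter, direct route with a \emph{different} one-parameter family. Rather than the family $\{\beta_x\}_{x\in\partial N}$ of tangent closed geodesics, it uses the geodesics $\gamma_{Y_s}$ issuing from the fixed basepoint $p_0$, where $Y_s\in\Omega_{p_0}N$ rotates through the inward half-circle from $Y_0(p_0)$ to $-Y_0(p_0)$; by simplicity of $M$ and the scattering-data hypothesis one can parametrize $Y_s$ so that the far endpoint $\gamma_{Y_s}(\tau(Y_s))$ traces out $\beta_1(s)$. Closing each $\gamma_{Y_s}$ by the arc of $\beta_1$ from $\beta_1(s)$ back to $p_0$ produces a based-loop homotopy $H_s$ at $p_0$ with $[H_0]_{p_0}=[\beta_2]_{p_0}$ (direction $Y_0(p_0)$, closed geodesic once around) and $[H_1]_{p_0}=[\beta_2]_{p_0}^{-1}[\beta_1]_{p_0}^{-1}$ (direction $-Y_0(p_0)$, reversed closed geodesic followed by $\beta_1^{-1}$), which gives $[\beta_1]_{p_0}=[\beta_2]_{p_0}^{-2}$ immediately. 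Your M\"obius band is an instructive way to \emph{picture} this homotopy, but as a proof strategy it imports global-geometry questions that the explicit homotopy simply avoids.
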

\begin{proof}
  We shall prove the equivalent statement
  \begin{align}
    [\beta_2]_{p_0} = [\beta_2]_{p_0}^{-1} [\beta_1]_{p_0}^{-1}.
    \label{eq_beta}
  \end{align}
  Let $Y : [0, 1]_{p_0} \rightarrow \partial_+ \Omega N$
  be a smooth curve from $Y_0(x)$ to $-Y_0(x)$ such that
  $\gamma_{Y_t}(\tau(Y_t)) = \beta_1(t)$.

  Define $H : [0, 1] \times [0, 1] \rightarrow M$ as
  \begin{align*}
    H_s(t) =
    \begin{cases}
      \gamma_{Y_s}(2 \tau(Y_s) t) & \text{if $0 \le t \le \frac{1}{2}$,}\\
      \beta_1( (2 - 2t)s ) & \text{if $\frac{1}{2} \le t \le 1$.}
    \end{cases}
  \end{align*}
  Then $[H_0]_{p_0} = [\beta_2]_{p_0}$ and $[H_1]_{p_0} = [\beta_2]_{p_0}^{-1} [\beta_1]_{p_0}^{-1}$,
  which implies \eqref{eq_beta}.
\end{proof}
Notice that $\beta_x$ are all in the same homology class.
Denote by $g_C$ the homology class of $\beta_x$.

Assume that $g_C \neq 0$.
Since $N$ is a surface with boundary,
it deformation retracts to a graph.
(The deformation is quite simple.
Take any cell structure on $N$.
Remove a 1-cell on the boundary and a 2-cell by deformation retraction if they intersect.
Repeat this process until all 2-cells are removed.)
Hence $H_1(N, \integer) = \integer^n$ for some $n \in \nonneg$.
So $g_C = m g_0$ for some $m > 0$ and $g_0$ prime.
Then the multiplicity of $\beta_x$ is at most $m$
since it must divide $m$.
Let $m_0$ be the maximal multiplicity of $\beta_x$.
\begin{prop}
    If $g_C \neq 0$, then $H(N, \integer)$ is generated by $g_0$.
    \label{prop:generate}
\end{prop}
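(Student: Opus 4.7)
The plan is to use Proposition~\ref{prop:square} to constrain the boundary class of $N$ as a square in $\pi_1(N, p_0)$, together with the classification of closed surfaces, to pin down the topological type of $N$. Since $M$ is a simple 2-manifold, $\partial M$ is a single circle, and the isometry $h$ forces $\partial N$ to be a single circle as well. Restricting to the component of $N$ containing $\partial N$, I may assume $N$ is connected, so that $N$ is a compact surface with exactly one boundary circle. Then $\pi_1(N, p_0)$ is a free group $F_n$ of some rank $n$, and $H_1(N, \integer) = \integer^n$. It thus suffices to show $n = 1$, since then $H_1(N, \integer) = \integer$ is cyclic and $g_0$, being primitive, is a generator.

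To show $n = 1$, I would cap off $\partial N$ with a disk to form a closed surface $\hat N$, and use Proposition~\ref{prop:square} to control $\pi_1(\hat N)$. Writing $a := [\beta_1]_{p_0}$ and $b := [\beta_2]_{p_0}$, Proposition~\ref{prop:square} gives $a = b^{-2}$, so
\[
  \pi_1(\hat N) \;=\; F_n / \langle\!\langle a \rangle\!\rangle \;=\; F_n / \langle\!\langle b^{-2} \rangle\!\rangle,
\]
in which $b^2 = 1$. Next I would check that $b$ is not itself trivial in this quotient: if it were, then passing to the abelianization $\integer^n$ would give $\bar b \in 2\bar b \integer$, forcing $\bar b = 0$ since $\integer^n$ is torsion-free; but $\bar b = g_C = m g_0 \neq 0$ by hypothesis. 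Hence $b$ has order exactly $2$ in $\pi_1(\hat N)$, so this group contains $2$-torsion.

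The main obstacle, or rather the key topological input, is the final step: among all closed surfaces, only $\mathbb{RP}^2$ has a fundamental group with any torsion. Invoking this classical fact, $\hat N \cong \mathbb{RP}^2$, so $N$ is a M\"obius band and $H_1(N, \integer) = \integer$ is generated by $g_0$. The rest of the argument is elementary manipulation of free groups and their abelianizations; the only care required is in applying Proposition~\ref{prop:square} with the correct orientation conventions and in handling the possible disconnectedness of $N$ (other components, being closed, cannot contribute nonzero classes comparable to $g_0$ without contradicting the same torsion argument applied to them).
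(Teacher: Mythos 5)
Your proof is correct, and it takes a genuinely different route from the paper's. The paper argues geometrically: for an arbitrary $g \in \pi_1(N, p_0)$ it takes a length-minimizing representative $\gamma_g$, decomposes it into arcs in $\partial N$ and interior geodesic arcs, uses the scattering data and simplicity of $M$ to show each interior arc is a closed geodesic tangent to $\partial N$, and concludes that $h(g)$ is a power of $g_0$. Your argument is purely topological once Proposition~\ref{prop:square} is in hand: cap off $\partial N$ with a disk to form a closed surface $\hat N$ with $\pi_1(\hat N) = \pi_1(N)/\langle\!\langle [\beta_1]\rangle\!\rangle$, substitute $[\beta_1] = [\beta_2]^{-2}$, use the abelianization together with $g_C \neq 0$ to show $[\beta_2]$ has order exactly $2$ in $\pi_1(\hat N)$, and invoke the classification of closed surfaces ($\mathbb{RP}^2$ being the only one with torsion in $\pi_1$) to conclude $\hat N \cong \mathbb{RP}^2$, so $N$ is a M\"obius band and $H_1(N,\integer) = \integer$. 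What you gain is a shorter argument that avoids the delicate geometric analysis and actually establishes the stronger fact that $N$ is a M\"obius band---which the paper only extracts later, inside the proof of Proposition~\ref{prop:disk}. What the paper's proof offers instead is a more hands-on picture of how an arbitrary loop is built from boundary runs and the closed geodesics $\beta_x$, without appealing to the surface classification. Two small caveats in your write-up: the expression ``$\bar b \in 2\bar b\,\integer$'' should read that $\bar b$ lies in the subgroup of $\integer^n$ generated by $\bar a = -2\bar b$, and the closing remark about disconnected components of $N$ is more of a gesture than an argument (a closed component contributes freely to $H_1$ and there is no analogue of $\beta_1,\beta_2$ there to run a torsion argument); this is harmless since, like the paper, you are implicitly working with $N$ connected and compact.
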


\begin{proof}
    For any $g \in \pi_1(N, p_0)$,
    let $\gamma_g : [0, 1] \rightarrow N$ be the length minimizing representative of $g$ that is of constant speed $T_g$.
    Since $A := \gamma_g^{-1} (N \setminus \partial N)$ is open,
    $A = \bigcup \mathcal{A}$ where $\mathcal{A}$ is a family of disjoint open intervals.
    For any $(a, b) \in \mathcal{A}$,
    since $\gamma_g$ is length minimizing,
    $\gamma_g|_{[a, b]}$ has to be a geodesic segment.
    If $a \neq 0$,
    then $\gamma_g'(a)$ has to be tangent to $\partial N$,
    or $\gamma_g$ will have a corner at $\gamma_g(a)$,
    contradicting the assumption that $\gamma_g$ is length minimizing.
    According to the scattering data,
    $\gamma_g'(b)$ is also tangent to $\partial N$ if $\gamma_g'(a)$ is tangent to $\partial N$.
    Hence $\gamma_g|_{[a, b]}$ is a closed geodesic tangent to $\partial N$ when $a \neq 0$.
    Similarly, $\gamma_g|_{[a, b]}$ is closed geodesic tangent to $\partial N$ when $b \neq 1$.
    Suppose that $a = 0$ and $b = 1$.
    If $\gamma_g'(0)$ is not tangent to $\partial N$,
    then $\gamma_g'(0) / |\gamma_g'(0)| = \varphi(X) \in \partial_+ \Omega N$ for some $X \in \partial_+ \Omega M$,
    and we have $\gamma_g'(1) / |\gamma_g'(1)| = \alpha_M(X)$.
    Since $M$ is a simple manifold and $X \in \partial_+ \Omega M$,
    $\gamma_X$ is a length minimizing geodesic,
    and thus
    $X$ and $\alpha_M(X)$ have different base points.
    It follows that $\gamma_g'(0)$ and $\gamma_g'(1)$ also have different base points,
    contradicting our assumption that $\gamma_g$ is a loop.
    Therefore, in any case,
    $\gamma_g|_{[a, b]}$ is a closed geodesic tangent to $\partial N$,
    and thus of length at least $\frac{L}{m_0}$.
    It follows that $|\mathcal{A}| \le m_0 T_g / L < \infty$.
    So we can write $\mathcal{A} = \{(a_1, b_1), (a_2, b_2), \dots, (a_{n_g}, b_{n_g})\}$
    where $n_g = |\mathcal{A}|$ and
    $0 \le a_1 < b_1 \le a_2 < b_2 \le \dots \le a_{n_g} < b_{n_g} \le 1$.

    Since $\gamma_g|_{[a_i, b_i]}$
    is a closed geodesic,
    $[\gamma_g|_{[a_i, b_i]}] = g_0^{k_i}$ for some $k_i > 0$.
    Deleting all those closed geodesics from $\gamma_g$,
    we obtain a curve running around $\partial N$ $l$ times
    for some $l \in \integer$.
    Its homology class will be $[\beta_1]^{\pm l} = [\beta_2]^{\pm 2l} = g_0^{\pm 2ml}$.
    Therefore,
    $h(g) = g_0^{\pm 2m_0l + \sum_{i = 1}^{n_g} k_i}$.
    Since $h$ is surjective, $H_1(N, \integer)$ is generated by $g_0$.
\end{proof}

\begin{prop}
  $N$ is not a M\"obius strip.
  \label{prop:not_mo}
\end{prop}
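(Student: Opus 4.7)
I proceed by contradiction: assume $N$ is a M\"obius strip. Then $\pi_1(N,p_0) \cong \integer$ is generated by the core class $g_0$, and the boundary loop represents $\pm 2 g_0$. By Proposition~\ref{prop:square}, $[\beta_2]_{p_0} = \mp g_0$, so $\beta_{p_0}$ is freely homotopic to the core of $N$; in particular, it is orientation-reversing.

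To exploit this, I pass to orientation double covers. Write $\pi \colon \widetilde{N} \to N$ for the orientation cover of $N$: this is an annulus whose boundary has two components $\partial_1, \partial_2$, each a homeomorphic lift of $\partial N$. Since $M$ is simple and therefore an orientable disk, its orientation cover is $\widetilde{M} = M \sqcup M$. The boundary isometry $h$ has two lifts $\widetilde{h} \colon \partial \widetilde{M} \to \partial \widetilde{N}$, each of which bijects boundary components. Because $\beta_{p_0}$ is orientation-reversing, its lift $\widetilde{\beta}_{p_0}$ in $\widetilde{N}$ is a geodesic arc from $\partial_1$ to $\partial_2$, tangent to $\partial \widetilde{N}$ at both endpoints.

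The final step uses the family of scattered geodesics. For $X \in \partial_+ \Omega M$, closing $\gamma_{\varphi(X)}$ with a short arc in $\partial N$ gives a loop in $N$ whose free homotopy class is locally constant in $X$; since $\partial_+ \Omega M$ is connected and this class limits to $\pm g_0$ as $X \to Y_0(p_0)$, it is everywhere $\pm g_0$ (orientation-reversing). Hence the lift of $\gamma_{\varphi(X)}$ to $\widetilde{N}$ always connects $\partial_1$ and $\partial_2$. On the other hand, the lift of $\gamma_X$ to $\widetilde{M}$ stays in one component of $\widetilde{M}$, and the intertwining $\widetilde{\varphi} \circ \alpha_{\widetilde{M}} = \alpha_{\widetilde{N}} \circ \widetilde{\varphi}$ (transported through $\widetilde{h}$) requires the two endpoints of the lifted $\gamma_{\varphi(X)}$ to lie in a single component of $\partial \widetilde{N}$. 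These behaviors — component-swapping from the $\widetilde{N}$ side, component-preserving from the $\widetilde{M}$ side — contradict each other under either choice of $\widetilde{h}$, yielding the desired contradiction.

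The main obstacle I anticipate is precisely this last step, where the twofold freedom in $\widetilde{h}$ could superficially appear to absorb the mismatch by a deck transformation. I plan to rule this out by tracking both possible lifts explicitly and showing that each forces $\widetilde{h}(\text{comp}\,c)$ to simultaneously equal $\text{comp}\,c$ and $\text{comp}\,\bar c$ in $\partial \widetilde{N}$, an impossibility that leaves no admissible lift of $h$ and completes the contradiction.
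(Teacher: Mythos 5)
Your first two paragraphs are sound: by Proposition~\ref{prop:square}, $[\beta_1]_{p_0}=[\beta_2]_{p_0}^{-2}$, so if $N$ were a M\"obius strip (with $\pi_1(N)\cong\integer$ torsion-free and boundary class $\pm 2g_0$), then $[\beta_2]_{p_0}=\mp g_0$ is indeed the core, hence orientation-reversing; its lift to the orientation double cover $\widetilde N$ is a geodesic arc joining the two boundary circles; and $\widetilde M = M\sqcup M$. You are also right that the free homotopy class of ``$\gamma_{\varphi(X)}$ closed by a boundary arc'' is locally constant in $X$, hence constant $\pm g_0$, so every lifted $\gamma_{\varphi(X)}$ swaps boundary components of $\widetilde N$.

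The gap is in the final step. You argue that the intertwining $\widetilde\varphi\circ\alpha_{\widetilde M}=\alpha_{\widetilde N}\circ\widetilde\varphi$ fails for both lifts $\widetilde h$, and treat this as the contradiction. But that intertwining is not a hypothesis and does not follow from $\varphi\circ\alpha_M=\alpha_N\circ\varphi$. Both $\widetilde\varphi\circ\alpha_{\widetilde M}$ and $\alpha_{\widetilde N}\circ\widetilde\varphi$ are lifts of the same map $\varphi\circ\alpha_M$, so on each component of $\partial_+\Omega\widetilde M$ they either agree or differ by the nontrivial deck transformation; your orientation-reversing observation shows they differ, which is a perfectly consistent state of affairs, not a contradiction. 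The ``sheet-tracking'' data in the lifted scattering map is exactly the extra information $w_1$ carries, and nothing in the hypotheses forces it to match between $\widetilde M$ and $\widetilde N$. Showing that no lift $\widetilde h$ is ``scattering-compatible'' at the level of covers therefore proves nothing; compatible lifts were never promised. Your anticipated repair (tracking both lifts and deriving $\widetilde h(\mathrm{comp}\,c)=\mathrm{comp}\,c=\mathrm{comp}\,\bar c$) addresses only the freedom in choosing $\widetilde h$, not the more basic issue that the lifted intertwining is not available to begin with.

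To close the gap you need a \emph{metric} input, which is what the paper uses. The paper works in the same double cover $N_1$ (an annulus with boundary $S_1,S_2$), takes a length-minimizing curve $\gamma$ from $S_1$ to $S_2$, shows it is the perpendicular geodesic $\gamma_\nu$ so that $d(S_1,S_2)=\tau_{N_1}(\nu)=\tau_N(\pi_*\nu)=\tau_M(\varphi^{-1}(\pi_*\nu))+L>L$, and then rotates the initial direction at $p$ towards the tangent: by continuity of $\alpha_N$ (inherited from the simple $M$) the endpoints stay on $S_2$, while the lengths $\tau_{N_1}(Y_t)=\tau_N(\pi_*Y_t)\to L<d(S_1,S_2)$, a contradiction. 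Your orientation-reversing observation can be folded into that argument --- it is exactly what makes the near-tangent lifted geodesics connect $\partial_1$ to $\partial_2$ and hence bound $d(\partial_1,\partial_2)\le L$ --- but on its own, without comparing lengths, it does not produce a contradiction.
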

\begin{proof}
  Let $\pi : N_1 \rightarrow N$ be a double over of $N$.
  Then $N_1$ is an annulus with two boundary components $S_1$ and $S_2$.
  There are $p \in S_1$ and $q \in S_2$ such that $d(p, q) = d(S_1, S_2)$.
  Let $\gamma$ be the shortest curve from $p$ to $q$,
  then $\gamma$ is perpendicular to $S_1$ and $S_2$ at tis end points.
  Let $\nu$ be the unit normal vector at $p$.
  Since $\gamma$ is the shortest curve from $p$ to $q$,
  its beginning part must coincide with $\gamma_{\nu}$.
  If the end point $\gamma_{\nu}(\tau_{N_1}(\nu))$
  is on $S_1$,
  we can shorten $\gamma$ by deleting $\gamma_\nu$.
  If the end point $\gamma_{\nu}(\tau_{N_1}(\nu))$
  is on $S_2$,
  then $\gamma$ can not be any longer.
  Thus $\ell(\gamma) = \tau_{N_1}(\nu)$
  
  Notice that, for any $X \in \partial_+ \Omega N_1$,
  $\pi \circ \gamma_X = \gamma_{\pi_*(X)}$.
  Let $Y_t$ be a smooth curve in $\Omega_p N_1$ such that
  $Y_0 = \nu$, that $Y_t \in \partial_+ \Omega N_1$ for $t \in [0, 1)$ and
  that $Y_1$ is tangent to $\partial S_1$.
  Notice that the end point of $\pi \circ \gamma_{Y_t} = \gamma_{\pi_*(Y_t)}$ moves continuously (since $N$ has the same scattering data as the simple surface $M$),
  and hence the end point of $\gamma_{Y_t}$ moves continuously for $t \in [0, 1)$.
  Therefore, $\gamma_{Y_t}$ connects $p$ and $S_2$ for any $t \in [0, 1)$.
  However,
  we have $\ell(\gamma) = \tau_{N_1}(\nu) = \tau_{N}(\pi_*(\nu)) > L$,
  and $\lim_{t \rightarrow 1} \ell(\gamma_{Y_t}) = \lim_{t \rightarrow 1} \tau_{N}(\pi_*(Y_t)) = L$,
  which contradicts our assumption that $\gamma$ is a shortest curve connecting $S_1$ and $S_2$.
\end{proof}

\begin{proof}[Proof of Proposition \ref{prop:disk}]
  If $g_C \neq 0$,
  then $H_1(N, \integer)$ is generated by $g_0$ by Proposition \ref{prop:generate}.
  Hence $H(N, \integer) = \integer$,
  which implies that $N$ is a M\"obius strip,
  which contradicts Proposition \ref{prop:not_mo}.

  Therefore, $g_C = 0$.
  It follows that $\beta_2$ is contractible,
  and hence $\beta_1$ is contractible by Proposition \ref{prop:square}.
  Since every contractible simple closed curve on a surface bounds a disk \cite[Theorem 1.7]{epstein1966curves},
  $N$ is a disk.
\end{proof}

\section{Knot theory}
In this chapter,
$N$ will denote a Riemannian surface, with or without boundary, orientable or not.
We assume that there is a Riemannian metric on $N$
just for convenience and all the results can be stated with only a smooth structure.
\label{sec:knot}
\subsection{Projectivized unit tangent vector fields}
  \begin{defn}
    The \emph{unit tangent vector field}  of
    a smoothly immersed curve $\gamma$ on any 
    Riemannian surface $N^2$ (possibly with boundary) is a smoothly immersed curve
    $\tilde{\gamma}$ in $\Omega N$ defined as
    \begin{align}
      \tilde{\gamma}(t) = \left(\gamma(t), \frac{\gamma'(t)}{|\gamma'(t)|}\right).
      \label{eq:unit_tangent}
    \end{align}
  \end{defn}
  \begin{defn}
    Let $P : \Omega N \rightarrow P \Omega N$ be the quotient map
    on the unit tangent bundle which identifies the opposite vectors based
    at the same point.
    For any smoothly immersed curve
    $\gamma$ in $N^2$,
    $P \circ \tilde{\gamma}$ is called
    the \emph{projectivized unit tangent vector field} (or the \emph{tangent line field}) of $\gamma$.
  \end{defn}

  \begin{rem}
    Chmutov--Goryunov--Murakami \cite{MR1776110} showed that every knot type in $\Omega \mathbb{R}^2$ is realized by the unit tangent vector field along an immersed closed plane curve. However, Theorem \ref{thm:knot} shows that it is no longer possible to realize the trivial knot after the projectivization. Figure \ref{fig:8} is an interesting example showing that the unit tangent vector field of the figure eight curve is an unknot while the projectivized unit tangent vector field of the
figure eight curve is knotted.
  \end{rem}
  \begin{figure}[h]
    \centering
    \begin{subfigure}[t]{0.28\textwidth}
      \centering
      \includegraphics[width=\textwidth]{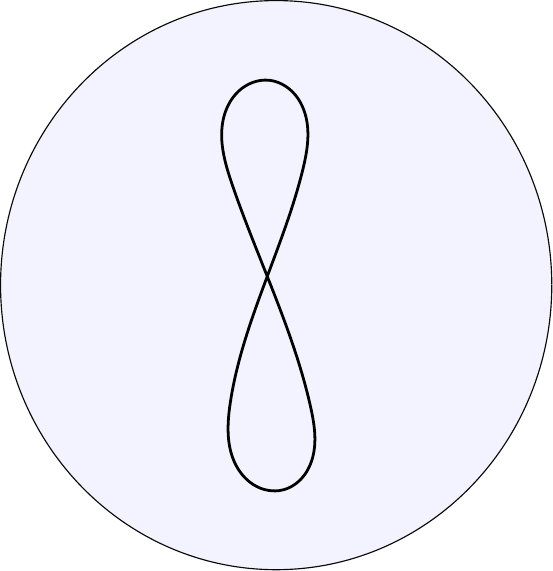}
      \caption{$\gamma : \sphere \rightarrow \real^2$}
    \end{subfigure}%
    \quad 
    \begin{subfigure}[t]{0.28\textwidth}
      \centering
      \includegraphics[width=\textwidth]{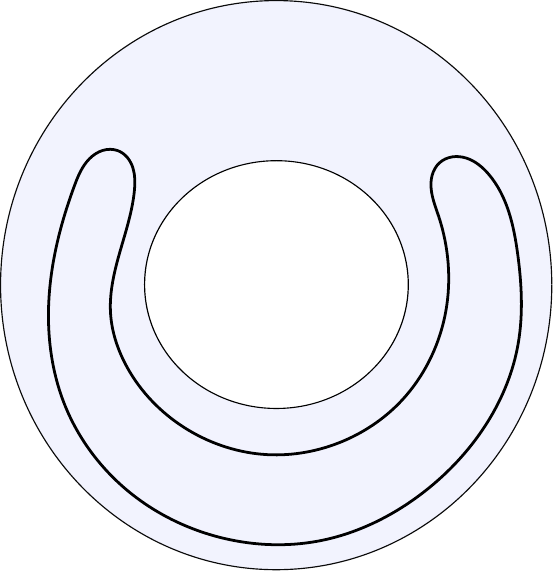}
      \caption{$\tilde\gamma : \sphere \rightarrow \Omega\real^2$}
    \end{subfigure}%
    \quad
    \begin{subfigure}[t]{0.28\textwidth}
      \centering
      \includegraphics[width=\textwidth]{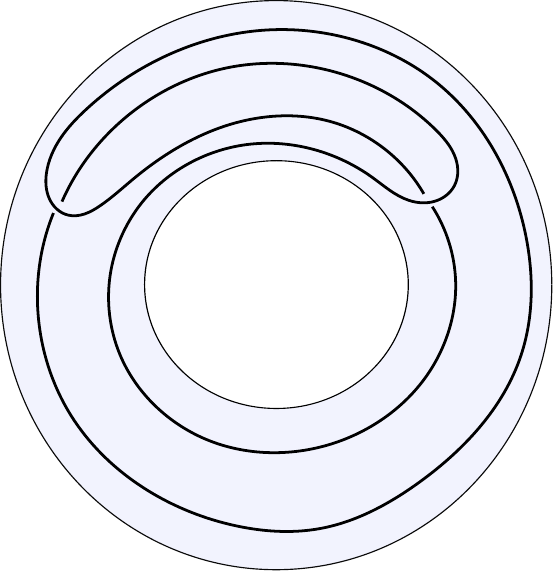}
      \caption{$P \circ \tilde{\gamma} : \sphere \rightarrow P \Omega \real^2$}
    \end{subfigure}
    \caption{The unit tangent vector field of the figure eight curve is unknotted while the projectivized unit tangent vector field of the figure eight curve is knotted. Here the solid tori ($\Omega \real^2$ and $P \Omega \real^2$) are projected to annuli for illustration.}
    \label{fig:8}
  \end{figure}

  \begin{prop}
    For any smoothly \textbf{embedded} closed curve $\gamma : \sphere \rightarrow N$
    in a 2-dimensional manifold $N$,
    $\tilde{\gamma}$ is not contractible in $\Omega N$ and hence
    $P \circ \tilde{\gamma}$ is not contractible in $P \Omega N$.
  \label{prop:embedded}
  \end{prop}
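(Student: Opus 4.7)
The plan is to handle the two assertions in sequence: first reduce the second (non-contractibility of $P \circ \tilde\gamma$ in $P \Omega N$) to the first (non-contractibility of $\tilde\gamma$ in $\Omega N$) via the double cover $P$, and then prove the first by splitting on whether $\gamma$ is null-homotopic in $N$.

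For the reduction, suppose for contradiction that $P \circ \tilde\gamma$ admits a null-homotopy $H : \sphere \times [0, 1] \to P \Omega N$. Since $\tilde\gamma$ itself is a closed loop in $\Omega N$ lifting $P \circ \tilde\gamma$, the $P$-monodromy along the $\sphere$-generator of $\pi_1(\sphere \times [0, 1])$ is trivial, so $H$ lifts to $\tilde H : \sphere \times [0, 1] \to \Omega N$ starting at $\tilde\gamma$. The endpoint $\tilde H|_{\sphere \times \{1\}}$ covers a constant loop and so lies in a single $P$-fiber---a two-point discrete set---hence it is itself constant, yielding a null-homotopy of $\tilde\gamma$, a contradiction.

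For the main claim, if $\gamma$ is not null-homotopic in $N$, then applying the projection $\pi : \Omega N \to N$ to any null-homotopy of $\tilde\gamma$ would produce one for $\gamma$, which is impossible. Otherwise $\gamma$ is an embedded null-homotopic simple closed curve, and by the theorem of Epstein already cited in the paper it bounds an embedded disk $D \subset N$. Over $D$ the unit tangent bundle is trivial, $\Omega D \cong D \times \sphere$, and by Hopf's \emph{Umlaufsatz} the $\sphere$-component of $\tilde\gamma$ has degree $\pm 1$ (this degree is well-defined independent of trivialization, since $D$ is contractible). Hence $[\tilde\gamma] = \pm [F]$ in $\pi_1(\Omega D) \cong \integer$, where $[F]$ is the fiber class, and the same identity persists after pushing into $\pi_1(\Omega N)$.

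It remains to check that $[F] \in \pi_1(\Omega N)$ is itself non-trivial. I would read this off the long exact sequence $\pi_2(N) \xrightarrow{\partial} \pi_1(\sphere) \to \pi_1(\Omega N)$ of the $\sphere$-bundle $\Omega N \to N$: the connecting homomorphism $\partial$ sends each spherical class to the Euler number of $TN$ evaluated on that sphere. For every surface other than $S^2$ and $\mathbb{R}P^2$ one has $\pi_2(N) = 0$, so $[F]$ has infinite order. In the two remaining closed surfaces of nonzero Euler characteristic the pairing equals $\pm 2$, so $[F]$ has order $2$ in $\pi_1(\Omega N)$---still non-trivial. These two exceptional cases are where I expect the only delicate bookkeeping to occur; the rest of the argument is a direct application of classical two-dimensional rotation-number reasoning.
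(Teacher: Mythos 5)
Your proposal is correct and follows essentially the same path as the paper: reduce to $\tilde\gamma$ via the covering $P$, invoke Epstein's theorem to bound an embedded disk when $\gamma$ is contractible, identify $[\tilde\gamma]$ with the fiber class of the circle bundle (the paper does this with the visual rotation argument of Figure~\ref{fig_rotate}, you do it by trivializing over the disk and citing Hopf's \emph{Umlaufsatz}, which is the same fact), and then show the fiber class is nonzero in $\pi_1(\Omega N)$ via the homotopy exact sequence, handling $S^2$ and $\real P^2$ separately. Your Euler-number description of the connecting map and the paper's direct computation of $\pi_1(\Omega S^2)=\pi_1(\real P^3)=\integer/2\integer$ are two phrasings of the same verification.
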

  \begin{proof}
    If $\gamma$ is contractible,
    then $\gamma$ bounds an embedded disk $N_1$ in $N$ \cite[Theorem 1.7]{epstein1966curves}.

    Let $x = \tilde{\gamma}(0)$,
    $p = \gamma(0)$
    and $F = \pi^{-1}(p)$.
    Denote by $[\tilde\gamma]_x$
    the based homotopy class of $\tilde\gamma$.
    \begin{figure}[h]
      \center
      \begin{subfigure}[t]{0.35\textwidth}
	\centering
	\includegraphics[width=\textwidth]{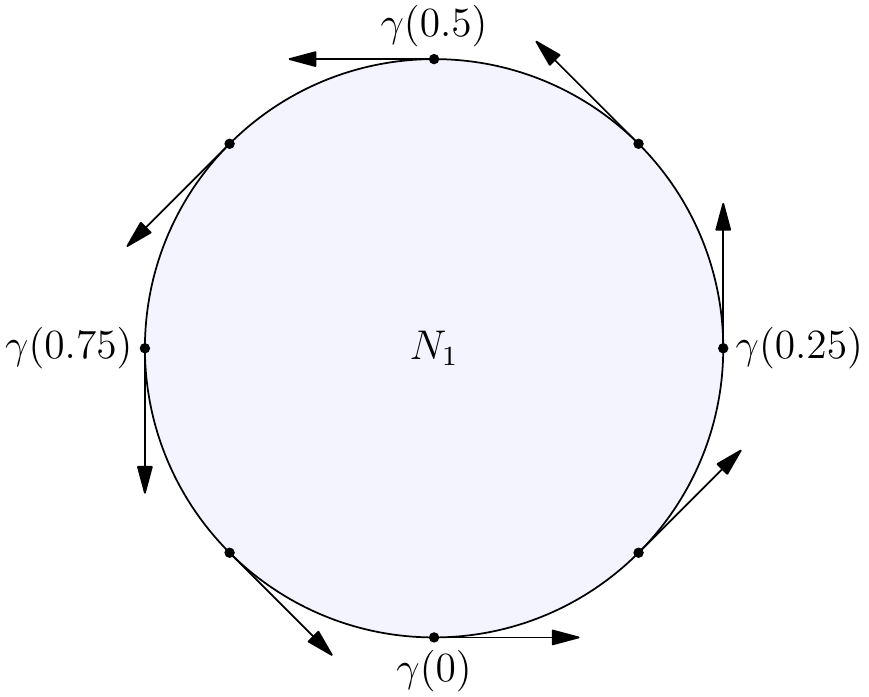}
	\caption{$\tilde\gamma$}
	\label{}
      \end{subfigure}%
      \quad
      \begin{subfigure}[t]{0.25\textwidth}
	\centering
	\includegraphics[width=\textwidth]{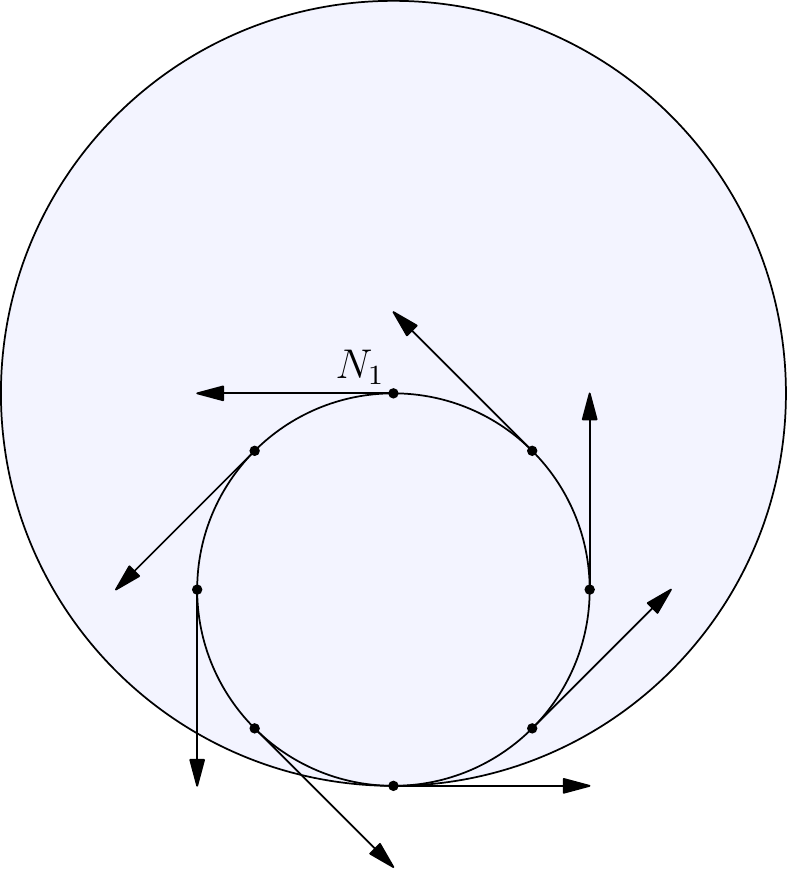}
	\caption{Moving base points towards $p$}
	\label{}
      \end{subfigure}
      \quad
      \begin{subfigure}[t]{0.25\textwidth}
	\centering
	\includegraphics[width=\textwidth]{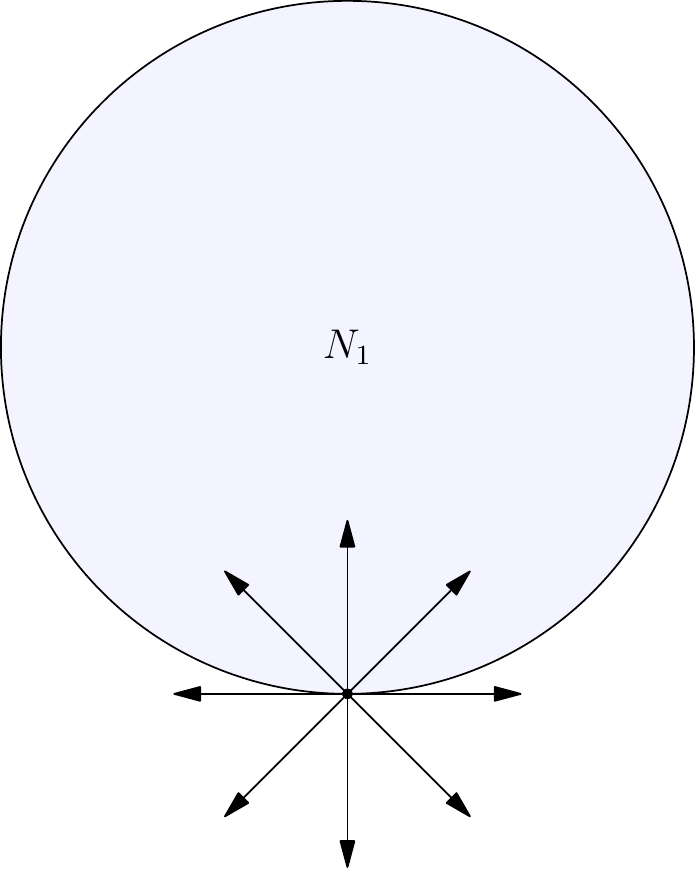}
	\caption{A generator of $\pi_1(F, x)$}
	\label{}
      \end{subfigure}
      \caption{$\tilde\gamma$ is homotopic to a generator of $\pi_1(F, x)$}
      \label{fig_rotate}
    \end{figure}
    As in Figure \ref{fig_rotate}, $\tilde\gamma$ corresponds to a vector moving along $\gamma$ for a complete circle and being tangent to $\gamma$ all the time,
    which is homotopic to a generator
    of $\pi_1(F, x)$.
    Denote the generator of $\pi_1(F, x)$ by $g_1$.

    Since 
    \begin{align*}
      \begin{CD}
	F @>i>> \Omega N @>\pi>> N
      \end{CD}
    \end{align*}
    is a fibration,
    we have an exact sequence of homotopy groups
    \begin{align*}
      \begin{CD}
	\pi_2(N, p) @>>> \pi_1(F, x) @>i_*>> \pi_1(\Omega N, x) @>\pi_*>> \pi_1(N, p).
      \end{CD}
    \end{align*}
    Here $\pi_1(F, x) = \integer$
    since $F$ is a circle.

    If $N = \mathbb{S}^2$,
    then $\pi_1(N, p) = 0$
    and $\pi_1(\Omega N, p) = \pi_1(\real P^3, *) = \integer / 2 \integer$.
    Hence $i_*(\pi_1(F, x)) = \integer / 2 \integer$.
    In particular, $i_*(g_1) \neq 0$.
    A similar argument shows that $i_*(e) \neq 0$ when $N = \real P^2$.

    If $N \neq \mathbb{S}^2$ and $N \neq \real P^2$,
    then $\pi_2(N, p) = 0$.
    Hence $i_*$ is injective.
    In particular, $i_*(g_1) \neq 0$.

    This completes the proof of Proposition \ref{prop:embedded}
  \end{proof}

  \subsection{Knot invariants}
  We shall define a family of knot invariants for contractible knots in the projectivized unit tangent bundle $P \Omega N$ and use these invariants to prove Theorem \ref{thm:knot}.

  Let $\beta : \sphere \rightarrow P \Omega N$ be a contractible smooth knot in the projectivized unit tangent bundle $P \Omega N$,
  whose projection to the surface $N^2$ is a smoothly immersed curve $\gamma :
  \sphere \rightarrow N^2$ without self-tangencies.
  \begin{defn}
    $\beta$ has a \emph{crossing} at $(l, l') \in \sphere \times \sphere$
    if $l \neq l'$ and $\gamma(l) = \gamma(l')$.
    Note that a triple crossing will be treated as three independent
    crossings according to this definition.
  \end{defn}
  Since $\beta : \sphere \rightarrow P \Omega N$ is contractible, we can lift $\beta$ to $\hat\beta : \sphere \rightarrow \Omega N$, a knot embedded in the unit tangent bundle. ($\hat\beta(t)$ is a unit vector at $\gamma(t)$ but not necessarily tangent to $\gamma$.)
  \begin{defn}
    A crossing of $\beta$ at $(l, l')$ is \emph{positive}
    if the two pairs of vectors $(\hat\beta(l), \hat\beta(l'))$
    and $(\gamma'(l), \gamma'(l'))$ are of the same orientation. (See Figure \ref{fig:sign}.)
    A crossing will be called \emph{negative} if it is not positive.
  \end{defn}
  \begin{figure}[h]
    \centering
    \begin{subfigure}[b]{0.4\textwidth}
      \centering
      \includegraphics[width=0.7\textwidth]{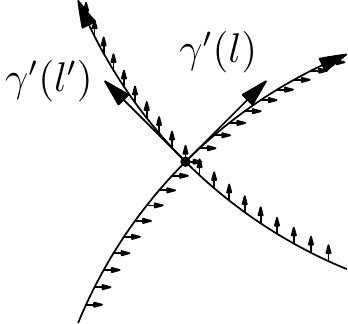}
      \caption{A positive crossing}
      \label{fig:pint}
    \end{subfigure}%
    \quad
    \begin{subfigure}[b]{0.4\textwidth}
      \centering
      \includegraphics[width=0.7\textwidth]{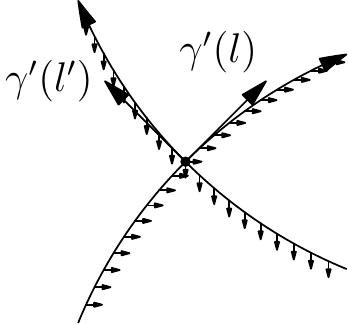}
      \caption{A negative crossing}
      \label{fig:nint}
    \end{subfigure}
    \caption{Each little arrow means a point on $\hat\beta$}.
    \label{fig:sign}
  \end{figure}
  \begin{lem}
    Suppose that $\gamma : \sphere \rightarrow N$ is a smoothly immersed closed curve on a surface $N$ without self-tangencies,
    then all crossings of $P \circ \tilde{\gamma}(t)$ are positive.
    \label{lem:positive}
  \end{lem}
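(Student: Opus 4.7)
The plan is to exploit the canonical lift $\hat\beta := \tilde\gamma$ of $\beta := P\circ\tilde\gamma$ and then reduce positivity to a one-line vector comparison. Since $\gamma : \sphere \to N$ is smooth and closed, its unit tangent vector field $\tilde\gamma : \sphere \to \Omega N$ is itself a smooth closed curve in $\Omega N$, and by construction $P\circ\tilde\gamma = \beta$. Thus $\tilde\gamma$ is a valid lift of $\beta$ in the sense required by the definition of positive crossing, and the sign of a crossing is actually independent of this choice: the only other continuous lift is $-\tilde\gamma$, which flips both vectors at any crossing and so preserves the orientation of the pair in the $2$-dimensional tangent space $T_{\gamma(l)} N$.

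Given a crossing of $\beta$ at $(l, l')$, by definition $l \neq l'$ and $\gamma(l) = \gamma(l')$, so $\gamma'(l)$ and $\gamma'(l')$ both live in the common tangent space $T_{\gamma(l)}N$. The no-self-tangency hypothesis guarantees that these two vectors are linearly independent, so the orientation of the ordered pair $(\gamma'(l), \gamma'(l'))$ is well defined. With the choice $\hat\beta = \tilde\gamma$ one has $\hat\beta(l) = \gamma'(l)/|\gamma'(l)|$ and $\hat\beta(l') = \gamma'(l')/|\gamma'(l')|$, each a positive scalar multiple of the corresponding $\gamma'$-vector. Therefore the ordered pairs $(\hat\beta(l), \hat\beta(l'))$ and $(\gamma'(l), \gamma'(l'))$ determine the same orientation of $T_{\gamma(l)}N$, which is the definition of a positive crossing.

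I expect essentially no obstacle: the lemma is tautological once one recognizes that the natural lift of the projectivized tangent line field to $\Omega N$ is the unprojectivized tangent vector field itself, and that the vectors appearing in the definition of the sign of a crossing are then positive multiples of one another. The no-self-tangency assumption is used only to ensure that the orientation being compared is actually defined; beyond that, positivity is immediate.
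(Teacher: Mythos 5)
Your proof is correct and takes essentially the same approach as the paper: choose $\hat\beta=\tilde\gamma$ as the lift of $\beta=P\circ\tilde\gamma$, observe that $\hat\beta(l),\hat\beta(l')$ are positive multiples of $\gamma'(l),\gamma'(l')$, and conclude the two ordered pairs have the same orientation. The extra remarks you add (independence of the choice of lift, and that no-self-tangency makes the orientation well-defined) are accurate but the paper treats these as implicit.
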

  \begin{proof}
    Suppose that $P \circ \tilde{\gamma}$ has a crossing at $(l, l')$.
    Write $\beta = P \circ \tilde{\gamma}$.
    Then $(\hat\beta(l), \hat\beta(l')) = (\gamma'(l), \gamma'(l'))$,
    and hence they have the same orientation.
    Therefore the crossing at $(l, l')$ is positive.
  \end{proof}

  Let $X$ be any topological space.
  For any two curves $\alpha_1 : [0, 1] \rightarrow X$ and $\alpha_2 : [0, 1] \rightarrow X$
  such that $\alpha_1(1) = \alpha_2(0)$,
  denote by $\alpha_1 * \alpha_2 : [0, 1] \rightarrow X$ the curve obtained by gluing $\alpha_2$ to $\alpha_1$.
  Also, define $R(\alpha_1)$ as $R(\alpha_1)(t) := \alpha_1(1-t)$.
  If $\alpha_1$ and $\alpha_2$ are loops based at $p$, we have $[\alpha_1]_p[\alpha_2]_p = [\alpha_1 * \alpha_2]_p$
  and $[R(\alpha_1)]_p = [\alpha_1]_p^{-1}$.

  \begin{defn}
    Two closed curves $\alpha_1 : \sphere \rightarrow X$ and $\alpha_2 : \sphere \rightarrow X$ in any topological space $X$ are said to be in the same \emph{unoriented free homotopy class}
    if $\gamma_1$ is homotopic to either $\gamma_2$ or $R(\gamma_2)$.
    \label{defn:ufhc}
  \end{defn}
  When $\beta$ has a crossing at $(l, l')$, $\hat\beta(l)$ and $\hat\beta(l')$ are two unit vectors with
  the same base point $x = \pi(\beta(l))$
  and they are neither opposite to each other nor the same
  (since $\beta$ is an embedding).
  Hence there is a unique shortest curve $\hat\beta_{(l, l')}$ in $\pi^{-1}(x)$ connecting $\hat\beta(l)$ and $\hat\beta(l')$.
  Separate $\hat\beta$ into two arcs by cutting at $\hat\beta(l)$ and $\hat\beta(l')$,
  obtaining two arcs
  $\hat\beta_1 : [0, 1] \rightarrow P \Omega N$
  and $\hat\beta_2 : [0, 1] \rightarrow P \Omega N$
  going from $\hat\beta(l)$ to $\hat\beta(l')$.

  Now,
  let $\beta_1' = (P \circ \hat\beta_1) * R(P \circ \hat\beta_{(l, l')})$,
  and $\beta_2' = (P \circ \hat\beta_{(l, l')}) * R(P \circ \hat\beta_2)$.
  Notice that $\beta_1' * R(\beta_2')$ is homotopic to $\beta$,
  and hence $[\beta_1']_p[R(\beta_2')]_p = [\beta_1' * R(\beta_2')]_p = [\beta]_p = e$.
  Hence $[\beta_1']_p = [R(\beta_2')]_p^{-1} = [\beta_2']_p$.
  In other words,
  $\beta_1'$ is homotopic to $\beta_2'$,
  and hence $\beta_1'$ and $\beta_2'$ are in the same unoriented free homotopy class of $P \Omega N$.

  \begin{defn}
  The unoriented free homotopy class $g_{(l, l')}$ of $\beta_1'$
  is called the \emph{type} of the crossing of $\beta$ at $(l, l')$. 
  \end{defn}
  \begin{figure}[h]
    \center
    \includegraphics[width=0.60 \textwidth]{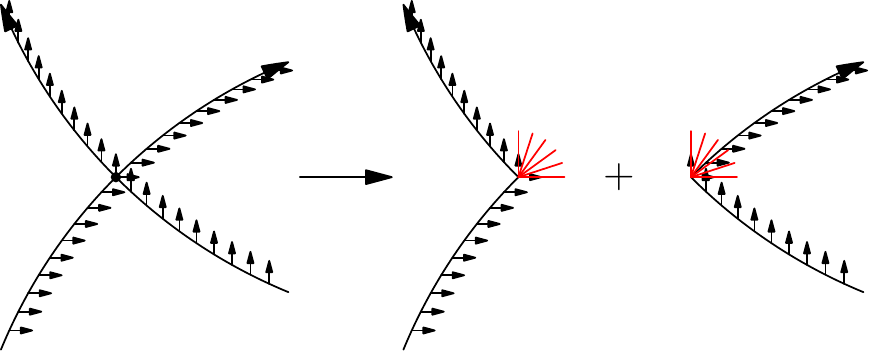}
    \caption{Smoothing a crossing. Here each little arrow means a point on
      $\hat\beta$ and each little bar means a point on $\hat\beta_{(l, l')}$.}
    \label{fig:smoothing}
  \end{figure}

  \begin{defn}
    For each \textbf{nontrivial} unoriented free homotopy class $g$ of closed curves in the projectivized unit tangent bundle $P \Omega N$,
    define
    \begin{align*}
      W_g(\beta) &= \#\{\text{positive crossings of $\beta$ of type $g$}\}\\
      &\quad -\#\{\text{negative crossings of $\beta$ of type $g$}\}
    \end{align*}
  \end{defn}

  \subsection{$W_g$ is a knot invariant.}
  \begin{thm}
    For each non-trivial free homotopy class $g$,
    $W_g$ can be extended to all the contractible knots embedded in $P \Omega N$ as a knot invariant.
    \label{thm:inv}
  \end{thm}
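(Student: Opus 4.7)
The plan is to define $W_g$ first on the open dense stratum $\mathcal{G}$ of embeddings $\beta : \sphere \to P\Omega N$ whose projection $\gamma := \pi \circ \beta$ is an immersion with only transverse double points, to verify that the value is unchanged under each generic codimension-one move in the space of smooth embeddings, and then to extend $W_g$ to every contractible embedded knot by declaring it constant on isotopy classes of generic representatives.

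First I would set up the standard transversality picture: any two knots in $\mathcal{G}$ in the same isotopy class can be joined by a smooth path $\beta_t$ of embeddings lying in $\mathcal{G}$ except at finitely many times, at each of which the projection $\gamma_t$ exhibits exactly one of three generic degenerations: (R1) an isolated cusp of $\gamma_t$, occurring where $\beta_t$ becomes tangent to a fiber of $\pi : P\Omega N \to N$; (R2) a first-order self-tangency of $\gamma_t$; or (R3) a transverse triple point of $\gamma_t$. It therefore suffices to check that $W_g$ does not jump across any of these events.

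For (R1), exactly one crossing of $\beta$ is created or destroyed, and one of the two arcs of $\hat\beta$ cut out at that crossing is a tiny loop encircling the cusp; it bounds a disk in $P\Omega N$, so the smoothed curve $\beta_1'$ is nullhomotopic, the crossing has trivial type, and since $g$ is nontrivial the crossing contributes $0$ to $W_g$. For (R2), two crossings $c_1, c_2$ appear (or cancel) in a small bigon at the tangency; a local orientation calculation shows that $(\gamma'(l), \gamma'(l'))$ reverses orientation from $c_1$ to $c_2$ while $(\hat\beta(l), \hat\beta(l'))$ does not (because $\hat\beta$ varies continuously and the two crossings are close in parameter), so $c_1$ and $c_2$ carry opposite signs; the bigon $D \subset N$ lifts via the projectivized tangent direction to a disk in $P\Omega N$ that furnishes a homotopy between the two smoothed curves, so the two crossings share the same type and their contributions to $W_g$ cancel. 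For (R3), the three crossings on one side of the triple point are matched bijectively with the three crossings on the other side according to which pair of the three meeting branches they involve; for each matched pair, the two smoothings are homotopic in $P\Omega N$ via a small homotopy over the triangular region, which lifts because the three projectivized tangent directions at the triple point are pairwise distinct and remain so nearby, while the signs are preserved since all relevant orientations deform continuously through the move.

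Given invariance under (R1), (R2), (R3), the extension is formal: for any contractible embedded knot $\beta$ choose an isotopic representative $\beta' \in \mathcal{G}$, which exists by density of $\mathcal{G}$, and set $W_g(\beta) := W_g(\beta')$; by the three invariances this is independent of the choice of $\beta'$ and is an isotopy invariant by construction. I expect the triple point case (R3) to be the principal obstacle: setting up the bijection between crossings before and after, and verifying that the triangular region genuinely lifts to an embedded disk in $P\Omega N$ through which the smoothed curves can be homotoped, requires a careful local picture of how $\hat\beta$ traces the three branches near the triple point.
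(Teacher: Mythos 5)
Your plan is correct in outline but takes a genuinely different route from the paper's. You work directly in the smooth category and invoke the classical genericity of one-parameter families of embedded curves with respect to the projection $\pi : P\Omega N \to N$, so that a generic isotopy stays in the good stratum except at finitely many times where the projection undergoes one of the three codimension-one degenerations (R1 cusp, R2 tangency, R3 triple point), and you check invariance of $W_g$ across each. The paper instead replaces smooth knots by piecewise-linear knots built from the minimal linear curves $\gamma^0_{p,q}$, forming the finite-dimensional space $\mathcal{K}(n,\varepsilon)$ stratified by the number of vertex-on-edge incidences. This buys elementary, finite-dimensional transversality (no infinite-dimensional Sard--Smale or jet-genericity needed), and the codimension-one wall $\mathcal{K}'_1(n,\varepsilon)$ is crossed at one of three events: a cusp, a self-tangency, or a transverse pass of a vertex through a non-adjacent edge; a non-vertex triple point of edges lives inside $\mathcal{K}_0(n,\varepsilon)$ and is absorbed silently there because crossings and their types vary continuously. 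The price is the nontrivial approximation Lemma~\ref{lem:isotopy} (proved in the appendix) needed to carry the PL invariant back to all smooth contractible knots. Two remarks on the details you flagged: your concern about whether the triangular region near a triple point ``lifts'' to $P\Omega N$ is misplaced, since the type of a crossing of two branches $a,b$ is built only from those two branches and the fiber arc joining their lifts and does not involve the third branch at all, so it varies continuously through the move without any lifting argument; and for R1, the paper's proof that the created crossing has trivial type is tighter than ``the small loop bounds a disk'': it homotopes the smoothed curve into a single fiber circle and bounds its vertical length by $\pi + 2\varepsilon < 2\pi$, which is what actually forces contractibility, and that quantitative bound is the step worth pinning down if you carry your smooth-category argument through.
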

  We will show that $W_g$ is a knot invariant by verifying that $W_g$ is unchanged under Reidemeister moves. A knot will gain or lose a crossing of trivial type after going through a Reidemeister move of type I. It will gain or lose a pair of crossing of the same type but opposite signs after going through a Reidemeister move of type II. Reidemeister moves of type III will not affect crossing. The proof is rather lengthy because of some technical difficulties.

    We will assume that $N$ is compact,
    and the general case follows automatically since any manifold is $\sigma$-compact.
    
    \begin{defn}
      According to \cite[Theorem 5]{alexander1987},
      there is $r > 0$ such that there is a unique minimal geodesic segment joining $p, q \in N$ if $d(p, q) < r$.
      The biggest such $r$ will be called the \emph{injectivity radius} of $N$ and we will denote it by $inj(N)$.
    \end{defn}
    
    For any two points $p, q \in P \Omega N$,
    let $d_h(p, q)$ be the distance between $\pi(p)$ and $\pi(q)$ on $N$.
    (So $d_h$ is a pseudo metric on $P \Omega N$.)
    Notice that $p$ is a projectivized unit tangent vector at $\pi(p)$.
    When $d_h(p, q) < inj(N)$,
    there is a unique shortest geodesic $\gamma : [0, 1] \rightarrow N$ in $N$ connecting $\pi(p)$ and $\pi(q)$.
    Let $X : [0, 1] \rightarrow P \Omega N$
    be the parallel projectivized vector field along $\gamma$
    such that $X(0) = p$.
    Similarly,
    let $Y : [0, 1] \rightarrow P \Omega N$
    be the parallel projectivized vector field along $\gamma$
    such that $Y(1) = q$.
    Notice that the angle between $X$ and $Y$ is constant,
    which is smaller or equal to $\frac{\pi}{2}$.
    Call this angle $d_v(p, q)$.
    Next, put $d^0(p, q) = \max(d_h(p, q), d_v(p, q))$.
    Note that $d_h$, $d_v$ and $d^0(p, q)$ are all non-negative and symmetric,
    but they are not metrics.
    \begin{defn}
      For any $p, q \in P \Omega N$ such that $d_h(p, q) < inj(N)$ and that
      $d_v(p, q) < \frac{\pi}{2}$,
      let $\gamma^0_{p, q} : [0, 1] \rightarrow \Omega N$ be the curve that satisfies the following conditions.
      \begin{enumerate}
	\item $\gamma^0_{p, q}(0) = p$ and $\gamma^0_{p, q}(1) = q$.
	\item $\pi \circ \gamma$ is the minimal geodesic connecting $\pi(p)$ and $\pi(q)$.
	\item
	\begin{align}
	  \left|\frac{D}{dt} \gamma^0_{p, q}\right| = d_v(p, q).
	  \label{eq:dv}
	\end{align}
      \end{enumerate}
      $\gamma^0_{p, q}$ will be called the \emph{minimal linear curve} connecting $p$ and $q$.
      A curve will be called \emph{linear} if it coincides with $\gamma^0_{p, q}$ for any pair of points $p, q$ on the curve that are close enough.
      A curve will be called \emph{piecewise linear} if it consists of finitely many linear curves.
    \end{defn}

  \begin{proof}[Proof of Theorem \ref{thm:inv}]

  For any $\varepsilon < \min(inj(N), \frac{\pi}{2})$ and $n \ge 4$,
  we will define a class of closed piecewise linear knots in $P \Omega N$ called $\mathcal{K}(n, \varepsilon)$.
  A closed knot $\beta : \real / \integer \rightarrow P \Omega N$ is in $\mathcal{K}(n, \varepsilon)$ if and only if the following condition holds:
  \begin{enumerate}
    \item $\beta$ is contractible.
    \item $d^0(\beta(\frac{k}{n}), \beta(\frac{k+1}{n})) < \varepsilon$ for $k = 0, 1, \dots, n-1$.
    \item $\beta(\frac{k+t}{n}) = \gamma^0_{\beta(\frac{k}{n}), \beta(\frac{k+1}{n})}(t)$ for $t \in [0, 1]$ and $k = 0, 1, \dots, n-1$.
  \end{enumerate}

  In other words,
  the ``distance'' ($d_h$ and $d_v$) between any two adjacent vertices $p$ and $q$ is at most $\varepsilon$ and the edge between them is $\gamma^0_{p, q}$.
  $\mathcal{K}(n, \varepsilon)$ is
  an open subset of $(P \Omega N)^n$,
  and thus of dimension $3n$.

  Let $\beta \in \mathcal{K}(n, \varepsilon)$ be a piecewise smooth \textbf{contractible} knot
  with vertices $\{x_k = \beta(\frac{k}{n})\}$ and edges $\{e_k = \gamma^0_{x_k, x_{k+1}}\}$.
  Its projection $\pi \circ \beta$ is said to have a \emph{singularity}
  at the vertex $\pi(x_i)$
  if $\pi(x_i)$ is on $\pi \circ e_j$ for some $j \notin \{i, i-1\}$.

  Let $\mathcal{K}_k(n, \varepsilon)$ be the set of knots in $\mathcal{K}(n, \varepsilon)$ whose projections on $N$ have at most $k$ singularities.
  Also,
  let $\mathcal{K}'_k(n, \varepsilon) = \mathcal{K}_k(n, \varepsilon) - \mathcal{K}_{k-1}(n, \varepsilon)$,
  knots with exactly $k$ singularities.
  Then $\mathcal{K}_0(n, \varepsilon)$ is a open submanifold of
  $\mathcal{K}(n, \varepsilon)$,
  and $\mathcal{K}'_1(n, \varepsilon)$ is a submanifold of
  $\mathcal{K}(n, \varepsilon)$ of co-dimension $1$.

    Notice that the $W_g(\beta)$ can be defined for $\beta \in \mathcal{K}_0(n, \varepsilon)$ as before without any modifications.
    Consider a continuous family of knots $\beta_t \in \mathcal{K}_0(n, \varepsilon)$.
    As $t$ varies,
    crossings of $\beta_t$ also moves continuously
    with their types unchanged.
    Therefore, $W_g$ is constant on each component of $\mathcal{K}_0(n, \varepsilon)$.

    Next, we extend $W_g$ to $\mathcal{K}_1(n, \varepsilon)$.
    The old definition can not be adapted directly since there might singularities.
    Pick any $\beta_0, \beta_1 \in \mathcal{K}_0(n, \varepsilon)$
    such that $\beta_0, \beta_1$ are in the same component of $\mathcal{K}_1(n, \varepsilon)$.
    We aim to show that $W_g(\beta_0) = W_g(\beta_1)$,
    and then we can extend $W_g$ to $\mathcal{K}_1(n, \varepsilon)$
    by making it constant on each component.
    Note that $W_g$ will remain the same on $\mathcal{K}_0(n, \varepsilon)$.

    Pick a smooth path $H : [0, 1] \rightarrow \mathcal{K}_1(n, \varepsilon)$ from $\beta_0$ to $\beta_1$.
    Perturbing $H$ if necessary,
    we may assume that $H$ intersects $\mathcal{K}'_1(n, \varepsilon)$ transversely a finite number of times.
    Let $x_0(t)$, $x_1(t)$, \dots, $x_n(t) = x_0(t)$ be the $n$ vertices of $H(t)$.
    
    As long as $H(t)$ stays in $\mathcal{K}_0(n, \varepsilon)$,
    each crossing will just be moving without changing its type.
    When $H(t)$ passes through $\mathcal{K}'_1(n, \varepsilon)$,
    there are three possibilities corresponding to three types of singularities for knots in $\mathcal{K}'_1(n, \varepsilon)$ listed below.
    Suppose $H(c) \in \mathcal{K}'_1(n, \varepsilon)$
    and $H(t) \notin \mathcal{K}'_1(n, \varepsilon)$
    for $t \in (c - \delta, c) \bigcup (c, c + \delta)$.
    Then $\pi \circ H(c)$ has a singularity at $\pi(x_i(c))$ which is on $\pi \circ e_j$
    where $x_i(c)$ is the $i$-th vertex of $H(c)$ and $e_j(c)$ is $j$-th edge of $H(c)$ (connecting $x_j(c)$ and $x_{j+1}(c)$).
    \begin{enumerate}
      \item If $\pi \circ e_i(c)$ or $\pi \circ e_{i-1}(c)$ is tangent to $\pi \circ e_j(c)$,
	then the singularity is called a \emph{cusp}.
	This happens when $i = j-1$ or $i = j+2$.
	In this case, $H(c + \delta)$ has one more or one less crossing than
	$H(c - \delta)$ has.
	We will show that the crossing involved is of the trivial type,
	(i.e., $g = 0$,)
	and hence $W_g(H(c-\delta)) = W_g(H(c+\delta))$ for any non-trivial unoriented homotopy class $g$ of closed curves immersed in $P \Omega N$.
\begin{figure}[h]
    \centering
    \begin{subfigure}[b]{0.15\textwidth}
      \centering
      \includegraphics[width=\textwidth]{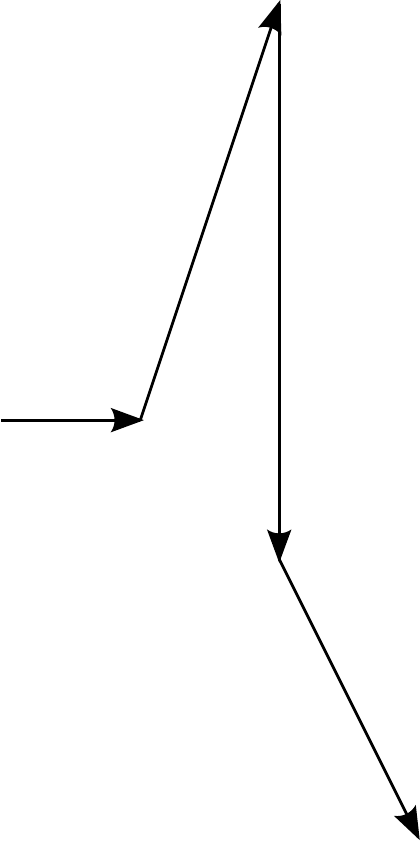}
      \caption{$t = c - \delta$}
      \label{fig:b1_l}
    \end{subfigure}%
    \quad
    \quad
    \quad
    \begin{subfigure}[b]{0.15\textwidth}
      \centering
      \includegraphics[width=\textwidth]{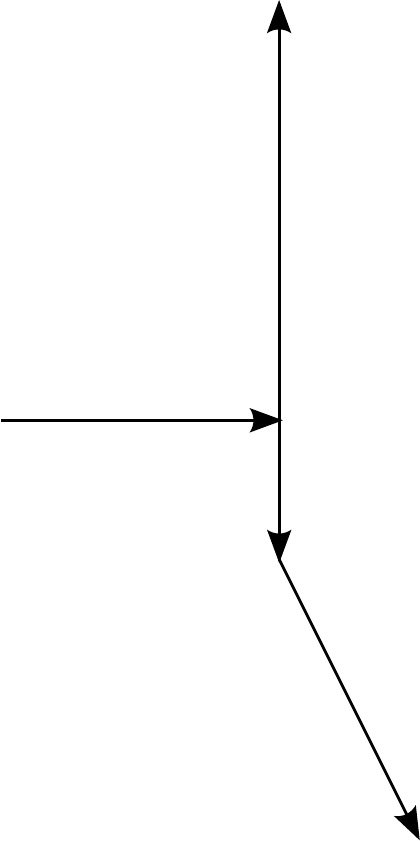}
      \caption{$t = c$}
      \label{fig:b1_0}
    \end{subfigure}
    \quad
    \quad
    \quad
    \begin{subfigure}[b]{0.15\textwidth}
      \centering
      \includegraphics[width=\textwidth]{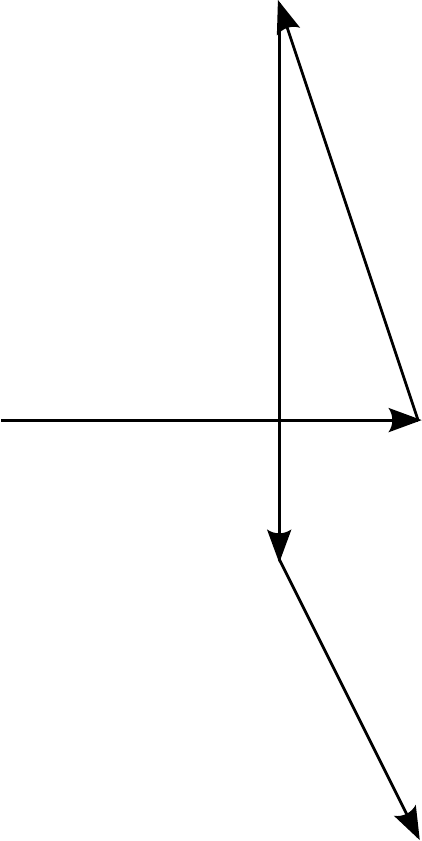}
      \caption{$t = c + \delta$}
      \label{fig:b1_r}
    \end{subfigure}
    \caption{\ref{fig:b1_r} has one more crossing of the trivial type compared to \ref{fig:b1_l}.
    This corresponds to a Reidemeister move of type I.}
    \label{fig:r1}
\end{figure}

	Without loss of of generality,
	assume that $i = j-1$ and 
	that $H(t)$ has one more crossing at $(l(t), l'(t))$ than
	$H(t')$ has when $c - \delta \le t' < c < t \le c + \delta$.
	(See Figure \ref{fig:r1}.)
	For any $t \in (c, c+\delta]$,
	swapping $l(t)$ and $l'(t)$ if necessary,
	we may
	assume that $l(t) \in (\frac{i-1}{n}, \frac{i}{n})$
	and $l'(t) \in (\frac{i+1}{n}, \frac{i+2}{n})$.
	Lift $H : [0,1] \rightarrow \mathcal{K}(n, \varepsilon)$ to $\hat{H} : [0, 1] \rightarrow (\sphere \rightarrow \Omega N)$.
	Since $H(t)$
	is an embedding,
	$H(t)(l(t)) \neq H(t)(l'(t))$,
	and hence $\hat{H}(t)(l(t))$ and $\hat{H}(t)(l'(t))$
	are not opposite vectors.
	It follows that there is a unique minimal geodesic $\hat{\alpha}(t) : [0, 1] \rightarrow \pi^{-1}(x)$ connecting
	$\hat{H}(t)(l(t))$ and $\hat{H}(t)(l'(t))$.
	Let $\alpha(t) = P \circ \hat\alpha(t)$
	and glue $\alpha(t)$ to $H(t)|_{[l(t), l'(t)]}$,
	obtaining a closed curve $C(t)$.
	Then the type of the crossing of $H(t)$ at $(l(t), l'(t))$
	is the unoriented homotopy class of $C(t)$.
	It remains to show that $C(t)$ is contractible.

	Let $l(c) = \lim_{t\rightarrow c+}l(t)$ and 
	$l'(c) = \lim_{t\rightarrow c+}l'(t)$,
	then
	$C(c)$ can be defined as before,
	which is homotopic to $C(t)$ for
	$t \in (c, c + \delta)$.
	We shall show that $C(c)$ is contractible.

	Reparametrize $C(c)$ as $\bar\beta : \real / \integer \rightarrow P \Omega N$
	such that $\bar\beta(0) = H(c)(l(c))$,  $\bar\beta(\frac{1}{3}) = x_{i+1}(c)$,
	$\bar\beta(\frac{2}{3}) = H(c)(l'(c))$ and $\pi(\bar\beta(\frac{1}{3}(1+s))) = \pi(\bar\beta(\frac{1}{3}(1-s)))$ for any $s \in [0, 1]$.
	To be precise,
	define $\bar\beta$ as
	\begin{align*}
	  \bar\beta(t) =
	  \begin{cases}
	    H(c)(\frac{i+3t}{n}) & \text{if $t \in [0, \frac{1}{3}]$,}\\
	    H(c)(\frac{i+1}{n} + (3t - 1)(l'(c) - \frac{i+1}{n})) & \text{if $t \in [\frac{1}{3}, \frac{2}{3}]$,}\\
	    \alpha(3 - 3t) & \text{if $t \in [\frac{2}{3}, 1]$.}
	  \end{cases}
	\end{align*}
	Consider the homotopy $G : [0, 1] \rightarrow (\sphere \rightarrow P \Omega N)$ defined as
	\begin{align*}
	  G(s)(t) =
	  \begin{cases}
	    \bar\beta(t) & \text{if $0 \le t \le \frac{1}{3}(1-s)$,}\\
	    T(\bar\beta(t), \pi(\bar\beta(\frac{1}{3}(1-s)))) & \text{if $\frac{1}{3}(1-s) \le t \le \frac{1}{3}(1+s)$,}\\
	    \bar\beta(t) & \text{if $\frac{1}{3}(1+s) \le t \le 1$,}
	  \end{cases}
	\end{align*}
	where $T(\bar\beta(t), \pi(\bar\beta(\frac{1}{3}(1-s))))$ is a projectivized unit tangent vector at $\pi(\bar\beta(\frac{1}{3}(1-s)))$ obtained by
	transporting $\bar\beta(t)$ parallelly along $\pi \circ e_j(c)$.
	Notice that $G(1)$ is a closed curve in $\Omega_{\pi(x_i)} N$,
	where $\Omega_{\pi(x_i)} N$ is a circle of length $2 \pi$, (using the Sasakian metric).
	We are going to show that $G(1)$ is contractible by showing that $\ell(G(1)) < 2 \pi$.
	For any piecewise smooth curve $\gamma : [a, b] \rightarrow P \Omega N$,
	define its \emph{vertical length} as
	\begin{align*}
	  \ell_v(\gamma) := \int_a^b \left|\frac{D}{dt}\gamma(t)\right| dt,
	\end{align*}
	where $\frac{D}{dt}$ is the covariant derivative.
	Loosely speaking,
	$\ell_v(\gamma)$ measure the angle that $\gamma(t)$ rotates by as $t$ goes from $a$ to $b$.
	By our construction, $\ell_v(G(s))$ is constant as $s$ goes from $0$ to $1$.
	Notice that $\bar\beta$ has three edges.
	The edge from $\bar\beta(0)$ to $\bar\beta(\frac{1}{3})$ and the edge from $\bar\beta(\frac{1}{3})$ to $\bar\beta(\frac{2}{3})$ both have vertical lengths at most $\varepsilon$ (by \eqref{eq:dv}),
	and the vertical length of the edge from $\bar\beta(\frac{2}{3})$ to $\bar\beta(0)$ (which is reparametrized $\alpha$) is at most $\pi$.
	Since $\varepsilon < \frac{\pi}{2}$, $\ell_v(\bar\beta) < \pi + 2\varepsilon < 2\pi$,
	and hence $\ell(G(1)) = \ell_v(G(1))  = \ell_v(G(0)) = \ell_v(\bar\beta) < 2\pi$.
	It follows that $G(1)$ is contractible,
	and hence $C(t)$ is contractible for any $t \in [c, c+\delta]$.

      \item If $\pi \circ e_i(c)$ and $\pi \circ e_{i-1}(c)$ are not tangent to $\pi \circ e_j(c)$,
        and if $\pi \circ e_i(c)$ and $\pi \circ e_{i-1}(c)$ are on the same side of $\pi \circ e_j(c)$,
	then the singularity is called a \emph{self-tangency}.
	In this case, $H(c + \delta)$ has two more or two less crossings than
	$H(c - \delta)$ has.
	We can show that the two crossings involved are of the same type $g$ but opposite signs,
	and hence $W_g(H(c + \delta)) = W_g(H(c - \delta))$.

	Without loss of of generality,
	assume that $H(t)$ has two more crossing at $(l_1(t), l'_1(t))$ and $(l_2(t), l'_2(t))$ than
	$H(t')$ has when $c - \delta \le t' < c < t \le c + \delta$.
	(See Figure \ref{fig:r2}.)
  \begin{figure}[h]
      \centering
      \begin{subfigure}[b]{0.15\textwidth}
	\centering
	\includegraphics[width=\textwidth]{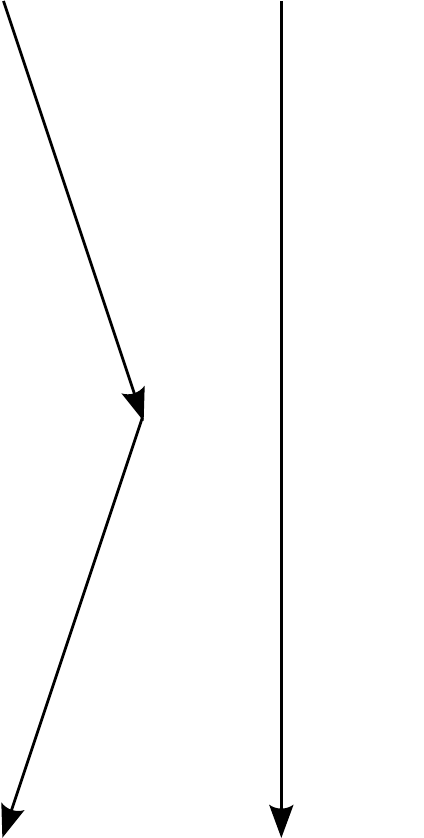}
	\caption{$t = c - \delta$}
	\label{fig:b2_l}
      \end{subfigure}%
      \quad
      \quad
      \quad
      \begin{subfigure}[b]{0.15\textwidth}
	\centering
	\includegraphics[width=\textwidth]{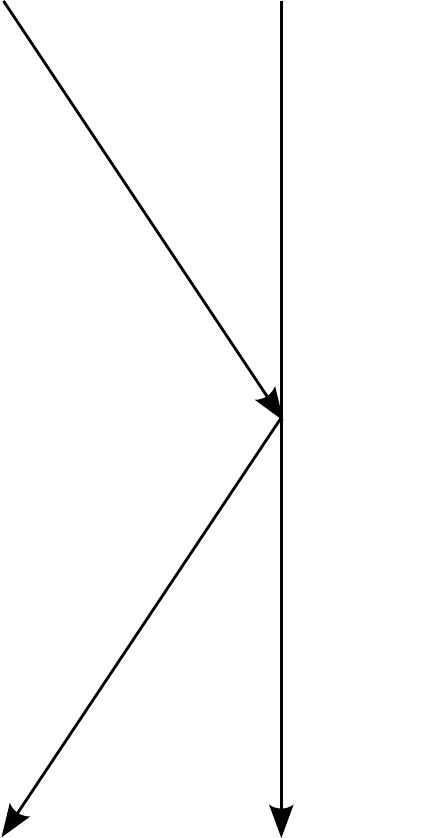}
	\caption{$t = c$}
	\label{fig:b2_0}
      \end{subfigure}
      \quad
      \quad
      \quad
      \begin{subfigure}[b]{0.15\textwidth}
	\centering
	\includegraphics[width=\textwidth]{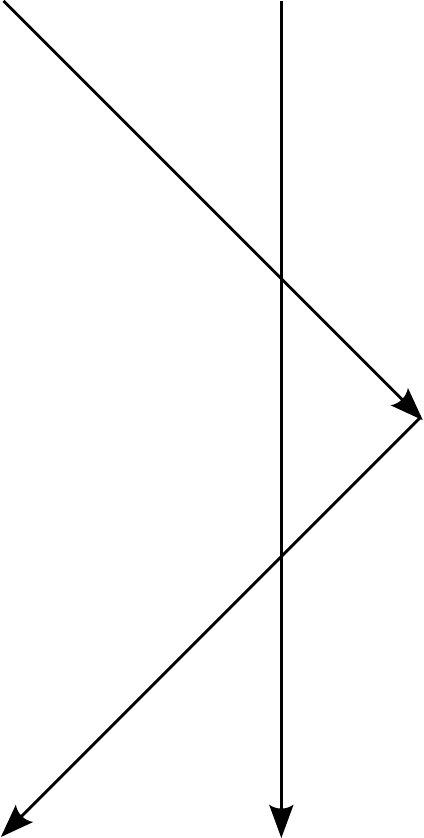}
	\caption{$t = c + \delta$}
	\label{fig:b2_r}
      \end{subfigure}
      \caption{\ref{fig:b2_r} has two more crossing of the same type but opposite signs compared to \ref{fig:b2_l}. This corresponds to a Reidemeister move of type II.}
      \label{fig:r2}
  \end{figure}
	Let $l_1(c) = \lim_{t \rightarrow c+} l_1(t)$ and define $l'_1(c)$, $l_2(c)$ and $l'_2(c)$ similarly.
	Switching $l_2$ and $l'_2$ if necessary,
	we may assume that $l_1(c) = l_2(c)$ and $l'_1(c) = l'_2(c)$.
	Also, either $H(l_1(c)) = x_i$ or $H(l'_1(c)) = x_i$.
	Without loss of generality,
	we assume that $H(l_1(c)) = x_i$.
	Lift $H : [0,1] \rightarrow \mathcal{K}(n, \varepsilon)$ to $\hat{H} : [0, 1] \rightarrow (\sphere \rightarrow \Omega N)$,
	and denote the vertices of $\hat{H}(t)$ by $\hat{x}_k(t)$ and edges by $\hat{e}_k(t)$.

	For any $t \in (c, c+\delta]$,
	we can separate $\hat{H}(t)$ into two arcs by cutting at $\hat{H}(l_1(t))$ and $\hat{H}(l'_1(t))$.
	Pick the arc which contains $\hat{x}_i(t)$ and glue it to $\hat{H}_{(l_1(t), l'_1(t))}$,
	obtaining a closed curve $C_1(t)$.
	We can also separate $\hat{H}(t)$ into two arcs by cutting at $\hat{H}(l_2(t))$ and $\hat{H}(l'_2(t))$.
	Pick the arc which does not contain $i\hat{x}_i(t)$ and glue it to $\hat{H}_{(l_2(t), l'_2(t))}$,
	obtaining a closed curve $C_2(t)$.
	Next, define $C_1(c)$ and $C_2(c)$ by taking limits.
	It is then clear that $P \circ C_1(t)$ and $P \circ C_2(t)$ are in the same unoriented homotopy class since $C_1(c) = C_2(c)$.
	Hence the two crossings at $(l_1(t), l'_1(t))$ and $(l_2(t), l'_2(t))$ are of the same type.

	Finally, it remains to show that the two crossings have opposite signs.
	Without loss of generality, assume that the crossing at $(l_1(t), l'_1(t))$ is positive.
	In other words,
	$(\hat{H}(l_1(t)), \hat{H}(l'_1(t)))$ and $( (\pi \circ H)'(l_1(t)), (\pi \circ H)'(l'_1(t)) )$ have the same orientation.
	It follows that
	$(\hat{H}(l_1(c)), \hat{H}(l'_1(c)))$ and $( \lim_{t \rightarrow c+}(\pi \circ H)'(l_1(t)), (\pi \circ H)'(l'_1(c)) )$ have the same orientation.
	Since $\pi \circ e_i(c)$ and $\pi \circ e_{i-1}(c)$ are on the same side of $\pi \circ e_j(c)$,	$( \lim_{t \rightarrow c+}(\pi \circ H)'(l_1(t)), (\pi \circ H)'(l'_1(c)) )$ and $( \lim_{t \rightarrow c+}(\pi \circ H)'(l_2(t)), (\pi \circ H)'(l'_2(c)) )$ have the opposite orientation.
	Since $(\hat{H}(l_1(c)), \hat{H}(l'_1(c)))$ and $(\hat{H}(l_2(c)), \hat{H}(l'_2(c)))$
	are the same,
	$(\hat{H}(l_2(c)), \hat{H}(l'_2(c)))$ and $( \lim_{t \rightarrow c+}(\pi \circ H)'(l_2(t)), (\pi \circ H)'(l'_2(c)) )$ have the opposite orientation,
	and thus the crossing at $(l_2(t), l'_2(t))$ is negative.

      \item If $\pi \circ e_i$ and $\pi \circ e_{i-1}$ are not tangent to $\pi \circ e_j$,
        and if $\pi \circ e_i$ and $\pi \circ e_{i-1}$ are on different sides of $\pi \circ e_j$,
	then the singularity is called a \emph{transverse self-intersection}.
	In this case, all crossings moves continuously as $t$ goes from $c - \delta$ to $c + \delta$, although one crossing will be also a singularity at $t = c$.
	The type and the sign of that crossing will be unchanged,
	which follows from an argument very similar to the one used for the previous case.
    \end{enumerate}
    In any case,
    we have $W_g(H(c - \delta)) = W_g(H(c + \delta))$ for any non-trivial type $g$.
    It follows that $W_g(\beta_0) = W_g(H(0)) = W_g(H(1)) = W_g(\beta_1)$,
    and thus we may extend $W_g$ to $\mathcal{K}_1(n, \varepsilon)$
    by making it constant on each component.

    Next, we will extent $W_g$ to the whole $\mathcal{K}(n, \varepsilon)$.
    Pick any $\beta_0, \beta_1 \in \mathcal{K}_1(n, \varepsilon)$
    such that $\beta_0, \beta_1$ are in the same component of $\KK$.
    We aim to show that $W_g(\beta_0) = W_g(\beta_1)$,
    and then we can extend $W_g$ to $\mathcal{K}(n, \varepsilon)$
    by making it constant on each component.

    The manifold $\KK$ has a natural stratified structure as follows.
    For any $\gamma \in \KK$,
    pick any neighborhood $U$ of $\gamma$.
    If $\gamma$ has $k$ singularities,
    then let $U_\gamma$ be the component of $U \bigcap \Kp{k}$
    containing $\gamma$, which is a submanifold embedded in $\KK$.
    Now, let $X_m = \{\gamma \in \KK : \dim(U_\gamma) = m\}$.
    Then $\KK$ is a stratified space whose $m$-dimensional stratum is $X_m$.
    We obviously have $X_{3n} = \Kk{0}$ and $X_{3n-1} = \Kp{1}$.
    (Note that $X_{m} = \Kp{3n - m}$ is not true when $m$ is big since the singularities are not necessarily independent. See Figure \ref{fig:stratified}.)
    \begin{figure}[h]
      \centering
      \def\svgwidth{0.4\columnwidth}
      \import{graph/}{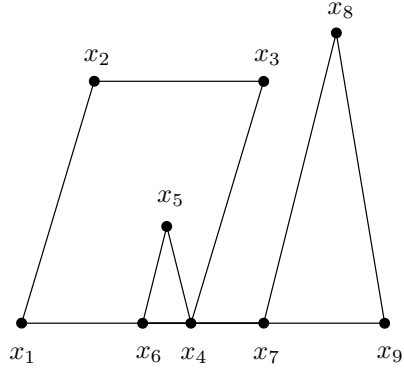}
      \caption{This is the projection of a knot $\beta \in X_{24}$ to $N$.
      Notice that $x_4$ is on the edge from $x_9$ to $x_1$ and also the edge from $x_6$ to $x_7$. Hence, there are two singularities involving $x_4$.
      There are also two singularities involving $x_6$ and $x_7$,
      and thus $\beta \in \mathcal{K}'_4(9, \varepsilon)$.
      This counterexample shows that 
      $X_{m} = \Kp{3n - m}$ is not true in general when $m$ is big.
    }
      \label{fig:stratified}
    \end{figure}
    Pick a smooth path $H : [0, 1] \rightarrow \mathcal{K}_1(n, \varepsilon)$ from $\beta_0$ to $\beta_1$.
    Perturbing $H$ if necessary,
    we may assume that $H$ intersects each stratum $X_m$ transversely.
    In other words, $H$ does not intersect $X_m$ at all if $m < 3n-1$.
    Hence $H$ is actually a path in $\Kk{1}$,
    and thus $W_g(\beta_0) = W_g(\beta_1)$.
    Therefore, we can extend $W_g$ to $\KK$
    by making it constant on each component.

    We can extend $W_g$ to a knot invariant for all contractible knots embedded in $P \Omega N$ using Lemma \ref{lem:isotopy}, which will be proved in Appendix \ref{sec:approx}.
    For any smooth contractible knot $\beta$,
    we can approximate $\beta$ by a piecewise linear knot $\beta'$ that is homotopic to $\beta$. Then we set $W_g(\beta) = W_g(\beta')$.
    $W_g$ is well-defined according to Lemma \ref{lem:isotopy}.
  \end{proof}

    Now, we are ready to prove Theorem \ref{thm:knot}.

  \begin{proof}[Proof of Theorem \ref{thm:knot}]
    Let $\beta = P \circ \tilde \gamma$ be the projectivized unit tangent vector field of $\gamma$.

    If $\beta$ is not contractible, then $\beta$ is a non-trivial knot.
    Assume that $\beta$ is contractible. We are going to show that $W_g(\beta) > 0$ for some $g$,
    while $W_g(\text{Unknot}) = 0$ for any $g$.
    
    Assume that $\gamma$ has no self-intersections.
    Then $\beta$ is not contractible by Proposition \ref{prop:embedded}.
    So $\gamma$ has at least one self-intersection.

    We will start at any point on $\gamma$ and trace along $\gamma$
    until hitting the trace.
    To be precise,
    let $q = \max\{t : \text{$\gamma|_{[0, t]}$ has no self-intersection}\}$.
    Then there is $p \in [0, q)$ such that $\gamma(p) = \gamma(q)$
    and $\beta$ has a crossing at $(p, q)$.
    By Lemma \ref{lem:positive},
    the crossing of $\beta$ at $(p, q)$ is positive.
    Separate $\hat\beta$ into two arcs by cutting at $\hat\beta(p)$ and $\hat\beta(q)$.
    Then $\hat{\beta}|_{[p, q]}$ will be one of these two arcs.
    Glue $\hat\beta|_{[p, q]}$ to $\hat\beta_{(p, q)}$,
    obtaining a closed curve $\hat{\beta}'$.
    We can gradually widen the angle of $\gamma|_{[p, q]}$ at the corner
    until it becomes a simple smooth closed curve,
    and $\hat{\beta}'$ will converge to the unit tangent vector field along that simple smooth closed curved.
    By Proposition \ref{prop:embedded}, $\hat{\beta}'$ is not contractible in $\Omega N$.

    Denote by $g$ the non-orientable homotopy type of $\hat\beta'$,
    then $W_g(\beta) \ge 1$ by Lemma \ref{lem:positive}.
    Since $W_g(\text{Unknot}) = 0$,
    $\beta$ is isotopically non-trivial.
  \end{proof}

  Actually, a stronger (but more technical) result can be proved with exactly the same proof.
  \begin{thm}
    Suppose that $\gamma_1 : [0, 1] \rightarrow N$ is smoothly immersed
    curve without self-tangencies
    and that $\beta_2 : [0, 1] \rightarrow P \Omega N$ is a smoothly embedded curve connecting the end points of $\beta_1 := P \circ \tilde{\gamma}$.
    Glue $\beta_2$ to $\beta_1$, obtaining a closed curve $\beta$ in $P \Omega N$.
    If $\gamma_1$ has at least one self-intersection,
    $\gamma_1$ and $\pi \circ \beta_2$ have no intersections
    and $\pi \circ \beta_2$ has no self-intersections,
    then $\beta$ is isotopically non-trivial.
    \label{thm:knot_2}
  \end{thm}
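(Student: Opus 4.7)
The plan is to run the proof of Theorem \ref{thm:knot} essentially verbatim, exploiting the hypotheses on $\beta_2$ to confine all crossings of $\beta$ to the $\beta_1$ portion. First, if $\beta$ is already not contractible in $P \Omega N$, we are done. So assume $\beta$ is contractible; then the invariants $W_g$ of Theorem \ref{thm:inv} apply, and it suffices to exhibit a nontrivial unoriented free homotopy class $g$ with $W_g(\beta) \ge 1$, since $W_g(\text{Unknot}) = 0$.

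The first key observation is that every crossing $(l, l')$ of $\beta$ must have both $l$ and $l'$ in the parameter range of $\beta_1$. Indeed, because $\pi \circ \beta_2$ has no self-intersections, there are no crossings with both parameters inside $\beta_2$; and because $\gamma_1 = \pi \circ \beta_1$ and $\pi \circ \beta_2$ are disjoint, there are no mixed crossings either. Hence crossings of $\beta$ are precisely the self-intersections of $\gamma_1$. Since the argument of Lemma \ref{lem:positive} is purely local at the crossing and uses only that $\hat{\beta}$ restricts to the unit tangent vector field of $\gamma_1$ there, we conclude that every crossing of $\beta$ is \emph{positive}.

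Next, we locate a crossing whose type is non-trivial. As in the proof of Theorem \ref{thm:knot}, since $\gamma_1$ has at least one self-intersection we may set
\[
q = \max\{t \in [0,1] : \gamma_1|_{[0, t]} \text{ has no self-intersection}\},
\]
and choose $p \in [0, q)$ with $\gamma_1(p) = \gamma_1(q)$. By the maximality of $q$, the restriction $\gamma_1|_{[p, q]}$ is injective away from its endpoints, so after smoothing the corner at $\gamma_1(p) = \gamma_1(q)$ it becomes a smooth simple closed curve in $N$. Applying Proposition \ref{prop:embedded} to this curve shows that its unit tangent vector field $\hat{\beta}'$ (obtained by gluing $\hat{\beta}|_{[p,q]}$ to $\hat{\beta}_{(p,q)}$ and then smoothing) is not contractible in $\Omega N$. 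Consequently, the type $g$ of the crossing of $\beta$ at $(p, q)$ is non-trivial.

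Finally, combining the two observations, $W_g(\beta)$ is a count of positive crossings of type $g$ with no negative cancellation, and it includes the crossing at $(p, q)$, so $W_g(\beta) \ge 1 > 0 = W_g(\text{Unknot})$, proving $\beta$ is isotopically non-trivial. The only real point that needs care, and which I expect to be the main obstacle, is the verification that the extended curve contributes no additional crossings and no negative ones; this is where the two hypotheses on $\beta_2$ (no self-intersections in projection, and disjointness from $\gamma_1$) are used in an essential way. Once this is granted, the reduction to Proposition \ref{prop:embedded} goes through exactly as in the closed-curve case.
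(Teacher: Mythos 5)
Your proof is correct and takes essentially the same approach as the paper's, which disposes of this theorem tersely by observing that the first self-intersection occurs with $q<\tfrac12$ and then pointing back to the proof of Theorem~\ref{thm:knot}. You have usefully made explicit the step the paper leaves implicit: that the hypotheses on $\beta_2$ confine every crossing of $\beta$ to the $\beta_1$ portion, so that Lemma~\ref{lem:positive} applies to all of them and no negative cancellation can occur in $W_g$.
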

  \begin{proof}
    Just let
    \begin{align*}
      \beta =
      \begin{cases}
	\beta_1(2t) & \text{if $t \in [0, \frac{1}{2}]$},\\
	\beta_2(2 - 2t) & \text{if $t \in [\frac{1}{2}, 1]$}.\\
      \end{cases}
    \end{align*}
    and $\gamma = \pi \circ \beta$.
    Let $q = \max\{t : \text{$\gamma|_{[0, t]}$ has no self-intersection}\}$.
    Then there is $p \in [0, q)$ such that $\gamma(p) = \gamma(q)$
    and $\beta$ has a crossing at $(p, q)$.

    Note that $q < \frac{1}{2}$ because $\gamma_1$ has at least one self-intersection.
    The rest of the proof is very similar to the proof of Theorem \ref{thm:knot}
    since it only involves $\beta_{[p, q]}$.

  \end{proof}

\section{Closed geodesics tangent to the boundary}
\label{proof}
In this section, $M$ and $N$ will be two Riemannian surfaces with the same scattering data rel $h : \partial M \rightarrow \partial N$
where $h$ is an isometry.
Also, $M$ is assumed to be simple.
$\varphi : \partial \Omega M \rightarrow \partial \Omega N$
is the induced bundle map defined in \eqref{eq:phi}.
In this section,
we shall prove Theorem \ref{thm:simple} by studying closed geodesics tangent to the boundary.

Recall that $L = \tau_{N}(\varphi(X)) - \tau_M(X) \ge 0$ is a constant. We need to show that $L = 0$.

For any $Y \in \partial_0 \Omega N$,
Recall that $\gamma_Y$ is the limit of geodesic segments
$\gamma_X$ as $X \rightarrow Y$ where $X \in \partial_+ \Omega N$.
$\gamma_Y$ is a closed geodesic of length $L$.

\begin{prop}
    If $L > 0$,
    then $P \circ \tilde{\gamma}_Y$ is contractible in $P \Omega N$
    for any $Y \in \partial_0 \Omega N$.
    \label{prop:trivial}
\end{prop}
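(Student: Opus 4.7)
Since $N$ is a 2-disk by Proposition \ref{prop:disk}, the projectivized unit tangent bundle $P\Omega N$ is a solid torus and $\pi_1(P\Omega N) \cong \integer$. So it suffices to prove that the class of $P\circ\tilde\gamma_Y$ in $\pi_1(P\Omega N)$ vanishes.

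The plan is to exhibit a singular 2-disk in $P\Omega N$ whose boundary is (up to an auxiliary loop on $\partial P\Omega N$) twice the class of $P\circ\tilde\gamma_Y$, and then to kill that auxiliary loop by pulling it back via the scattering map $\varphi$ to the simple manifold $M$. Take $Y = Y_0(x_0)$ and choose a smooth path $Y\colon[0,1]\to\overline{\partial_+\Omega_{x_0}N}$ with $Y_0=Y_0(x_0)$, $Y_1=-Y_0(x_0)$, and $Y_s\in\partial_+\Omega_{x_0}N$ for $s\in(0,1)$; extend $\tau_N$ continuously by $\tau_N(Y_0)=\tau_N(Y_1)=L$. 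Define
\[
D^N\colon [0,1]^2\to\Omega N,\qquad D^N(s,t)=\tilde\gamma_{Y_s}\bigl(t\,\tau_N(Y_s)\bigr).
\]
This is continuous and its four edges are: the closed-geodesic lift $\tilde\gamma_Y$ (left, $s=0$), its reversal (right, $s=1$), the fiber path $Y_s$ (bottom, $t=0$), and the scattering path $\alpha_N(Y_s)$ (top, $t=1$). Crucially, after composing with $P$ each edge becomes a \emph{loop} in $P\Omega N$, because $P$ identifies $Y_0(x_0)$ with $-Y_0(x_0)$, closing up the top and bottom. Since $P\circ D^N$ fills in the concatenation of these four loops, counting orientations yields
\[
2\,[P\circ\tilde\gamma_Y] \;=\; [P\circ\ell_N] \quad\text{in }\pi_1(P\Omega N),
\]
where $\ell_N = Y_s\ast\alpha_N(Y_s)^{-1}$ is the boundary-torus ``scattering loop''.

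The remainder of the proof is showing $[P\circ\ell_N]=0$. For $s\in(0,1)$ set $X_s=\varphi^{-1}(Y_s)\in\partial_+\Omega M$; the scattering relation $\alpha_N\circ\varphi=\varphi\circ\alpha_M$ gives $\ell_N=\varphi(\ell_M)$, where $\ell_M=X_s\ast\alpha_M(X_s)^{-1}\subset\partial\Omega M$. The analogous construction $D^M(s,t)=\tilde\gamma_{X_s}(t\,\tau_M(X_s))$ has the decisive new feature that, because $\partial M$ is strictly convex, $\tau_M(X_s)\to 0$ as $s\to 0$ or $1$, so the left and right edges of $D^M$ collapse to single points. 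Hence $P\circ D^M$ is a genuine disk in $P\Omega M$ with boundary exactly $P\circ\ell_M$, proving $[P\circ\ell_M]=0$ in $\pi_1(P\Omega M)$. Because $\varphi$ is a bundle map preserving the decomposition into tangential and normal components, it identifies the ``base'' and ``fiber'' generators of the boundary torus, so the inclusion-induced maps $\pi_1(\partial P\Omega)\to\pi_1(P\Omega)$ for $M$ and $N$ agree after $\varphi_*$. Thus $[P\circ\ell_N]_{P\Omega N}=[P\circ\ell_M]_{P\Omega M}=0$, and combined with the displayed equation, $2[P\circ\tilde\gamma_Y]=0$ in $\pi_1(P\Omega N)\cong\integer$; torsion-freeness forces $[P\circ\tilde\gamma_Y]=0$.

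The principal obstacle is the derivation of the formula $2[P\circ\tilde\gamma_Y]=[P\circ\ell_N]$: this requires tracking orientations carefully and using that the projectivization $P\colon\Omega N\to P\Omega N$ is a double cover on fibers, so a half-turn in $\Omega_{x_0}N$ becomes a full loop in $P\Omega_{x_0}N$. A secondary technical point is accommodating higher multiplicity of $\gamma_Y$ (when the primitive closed geodesic is traversed $m_0>1$ times), but the argument goes through unchanged provided one verifies that $D^N(s,\cdot)$ degenerates continuously to $\tilde\gamma_Y$ (resp.\ $\tilde\gamma_{Y_1}$) as $s\to 0$ (resp.\ $s\to 1$).
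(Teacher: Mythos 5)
Your overall strategy is a genuine alternative to the paper's, and the first two-thirds of it is sound. The ``square'' $D^N(s,t)=\tilde\gamma_{Y_s}(t\,\tau_N(Y_s))$ and the identity $2[P\circ\tilde\gamma_Y]=[P\circ\ell_N]$ in $\pi_1(P\Omega N)\cong\integer$ are correct (after $P$, all four corners collapse to $P(Y_0)$, and $P\circ\tilde\gamma_{Y_1}$ is the orientation-reversal of $P\circ\tilde\gamma_Y$, so the left and right edges contribute $-2[P\circ\tilde\gamma_Y]$). Your observation that the analogous $D^M$ degenerates to a genuine disk in $\Omega M$ because strict convexity forces $\tau_M(X_s)\to 0$ at the endpoints, so that $\ell_M$ is nullhomotopic in $\Omega M$, is also correct. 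The paper instead proves $\tilde\gamma_Y\simeq\tilde\gamma_{-Y}$ directly in $\Omega N$ via the homotopy $H_s$ and then nullhomotopes the auxiliary arc $H_1|_{[1/3,1]}$ by an explicit rotation count; your version packages the same auxiliary loop as $\ell_N$ and tries to kill it by pulling back through $\varphi$.

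The gap is in the transfer step: ``the inclusion-induced maps $\pi_1(\partial P\Omega)\to\pi_1(P\Omega)$ for $M$ and $N$ agree after $\varphi_*$.'' What $\varphi$ being a tangential/normal-preserving bundle map actually gives you is $\varphi_*(F_M)=F_N$ (fiber class) and $\varphi_*([\nu_M])=[\nu_N]$ (normal-section class) in $\pi_1$ of the boundary tori. But the kernel of inclusion into the solid torus is generated by the \emph{meridian}, which is the section that is constant in a global trivialization of the tangent bundle over the disk, not by $[\nu]$ or $F$. You have $[\nu_M]=\mu_M+d_M\,F_M$ and $[\nu_N]=\mu_N+d_N\,F_N$, where $d_M,d_N$ are the turning numbers of $\nu$ relative to the global frame; $\varphi_*(\mu_M)=\mu_N$ only if $d_M=d_N$. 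That equality does hold --- both $M$ and $N$ are disks (the latter by Proposition~\ref{prop:disk}), so Hopf's Umlaufsatz / Poincar\'e--Hopf gives $d_M=d_N=1$ --- but you assert it rather than prove it, and it is precisely the content of the paper's computation $B_1=2\pi$ (the tangent to $\psi(\partial N)$ turns by $2\pi$ around the unit circle). With a sentence invoking the Umlaufsatz to justify $\varphi_*(\mu_M)=\mu_N$, your argument closes. As a minor point, the ``secondary technical point'' about multiplicity is a non-issue here: nothing in either argument uses that $\gamma_Y$ is primitive, only that it is the length-$L$ closed geodesic tangent to $\partial N$ at $x_0$.
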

\begin{proof}
  Pick $p \in \partial \Omega N$.
  Let $Y \in \partial_0 \Omega_p N$ be one of the two unit vector at $p$ which are tangent to $\partial N$.
  Put
  \begin{align*}
      Y_s = \cos(\pi s) Y + \sin(\pi s) \nu(x),
  \end{align*}
  for each $s \in [0, 1]$.

  Now define a continuous family of loops $H : [0, 1] \times \sphere \rightarrow \Omega N$ as
  \begin{align*}
    H_s(t) =
    \begin{cases}
      \gamma_{Y_s}'(3 \tau(Y_s) t) & \text{if $0 \le t \le \frac{1}{3}$,}\\
      \alpha_N( Y_{(2 - 3t)s}) & \text{if $\frac{1}{3} \le t \le \frac{2}{3}$,}\\
      Y_{(3t - 2)s} & \text{if $\frac{2}{3} \le t \le 1$.}
    \end{cases}
  \end{align*}

  We shall show that $H_1|_{[\frac{1}{3}, 1]}$ is contractible.
  Since $N$ is a disk,
  there is a diffeomorphism $\psi : N \rightarrow \{(x, y) \in \real^2 : x^2 + y^2 \le 1\}$.
  For any $X \in \Omega N$
  and $x \in N$,
  let $\xi(x, X)$ be the unit vector based at $x$
  such that $\psi_*(\xi(x, X))$ and $\psi_*(X)$
  have the same directions as two vectors in $\real^2$.

  Since $N$ is a disk,
  $\partial N$ is homotopic the constant curve at $p$.
  So there is a homotopy $q : [0, 1] \times [0, 1] \rightarrow N$ such that
  $q(1, \cdot) = p$ and that
  $q(0, t) = \pi(\alpha_N(Y_{t}))$.
  Next, define a continuous family of loops $G : [0, 1] \times \sphere \rightarrow \Omega N$ as
  \begin{align*}
    G_s(t) =
    \begin{cases}
      \xi(q(s, 2t), \alpha_{M}(Y_{2t})) & \text{if $0 \le t \le \frac{1}{2}$,}\\
      Y_{2 - 2t} & \text{if $\frac{1}{2} \le t \le 1$.}
    \end{cases}
  \end{align*}
  Let $A_t$
  be the angle that $r_1(\bar{t}) := \xi(p, \alpha_M(Y_{\bar{t}}))$ rotates by
  as $\bar{t}$ goes from $0$ to $t$.
  We shall show that $A_1 = \pi$.
  Let $B_t$
  be the angle that $r_2(\bar{t}) := \xi(p, \frac{d}{d \bar{t}}q(0, \bar{t}))$ rotates by
  as $\bar{t}$ goes from $0$ to $t$.
  Notice that $\psi(q(0, \bar{t}))$ goes around the unit circle in $\real^2$ for a full circle
  as $t$ goes from $0$ to $1$.
  Hence $B_1 = 2 \pi$.
  Let $C_t$ be the signed angle between $r_1$ and $r_2$.
  Since $r_1(0)$ and $r_2(0)$ have the same direction,
  $C_t = B_t - A_t$.
  For any $t \in (0, 1)$, $\alpha_M(Y_{\bar{t}})$ is not tangent to $\partial N$,
  and hence $C_t \in [0, \pi]$ for $t \in [0, 1]$.
  Since $r_1(1)$ and $r_2(1)$ have opposite directions,
  $C_1 = \pi$,
  which implies that $A_1 = \pi$.
  Therefore, $\xi(p, \alpha_M(Y_{t}))$
  rotates counterclockwise by $\pi$ as $t$ goes from $0$ to $1$.
  On the other hand, the $Y_{2 - 2t}$ 
  rotate by $\pi$ clockwise as $t$ goes from $\frac{1}{2}$ to $1$.
  Hence $G_1$ is contractible.
  It follows that $G_0$ is contractible,
  and thus $H_1|_{[\frac{1}{3}, 1]}$ is contractible.

  Since $H_0|_{[\frac{1}{3}, 1]}$ is constant
  and $H_0|_{[0, \frac{1}{3}]}$ coincides with $\tilde{\gamma}_{Y}$,
  $H_0$ is homotopic to $\tilde{\gamma}_{Y}$.
  Since $H_1|_{[\frac{1}{3}, 1]}$ is contractible 
  and $H_1|_{[0, \frac{1}{3}]}$ coincides with $\tilde{\gamma}_{-Y}$,
  $H_1$ is homotopic to $\tilde{\gamma}_{-Y}$.
  Therefore, $\tilde{\gamma}_{Y}$ is homotopic to $\tilde{\gamma}_{-Y}$.

  If we rotate each vector counterclockwise by $\pi$,
  then $\tilde{\gamma}_{-Y}$ becomes $R(\tilde{\gamma}_{Y})$.
  Hence $\tilde{\gamma}_{Y}$ is homotopic to $R(\tilde{\gamma}_{Y})$.
  It follows that $[\tilde{\gamma}_{Y}] = [R(\tilde{\gamma}_{Y})] = -[\tilde\gamma_Y]$,
  where $[\alpha]$ means the homology class of $\alpha$.
  Since $N$ is a disk, $\Omega N$ is a solid torus,
  and hence $H_1(\Omega N, \integer) = \integer$.
  Thus $[\tilde{\gamma}_{Y}] = 0$,
  that is,
  $\tilde{\gamma}_{Y}$ is contractible.
  Hence $P \circ \tilde\gamma_Y$ is contractible in $P \Omega N$.
\end{proof}

  Fix an orientation of $\partial N$ and let $X_0(x)$
  be the unit vector tangent to $\partial N$ at $x \in \partial N$
  such that $X_0(x)$ and $\partial N$ have the same orientation.
  Let $h_1 : \real / \integer \rightarrow \partial N$
  be an orientation preserving diffeomorphism.
  Pick $\varepsilon > 0$ small and
  let $T : \partial N \rightarrow \partial N$ be a diffeomorphism
  defined as
  \begin{align*}
      T(x) = h_1(h_1^{-1}(x) + \varepsilon),
  \end{align*}
  and let $X_1(x) \in \partial_+ \Omega_x N$ be the vector
  which is tangent to the geodesic from $x$ to $T(x)$.
  Finally, put $X_2(x) = \alpha(X_1(T^{-1}(x)))$.
  When $T^{-1}(x)$, $x$ and $T(x)$ are close,
  both $X_1(x)$ and $X_2(x)$ are close to $X_0(x)$,
  so we may assume that the angle between $X_1(x)$ and $X_2(x)$
  is smaller than $\pi$
  by picking $\varepsilon$ small enough.
  See Figure \ref{YNT}.
  \begin{figure}[h]
      \center
      \includegraphics[width=200pt]{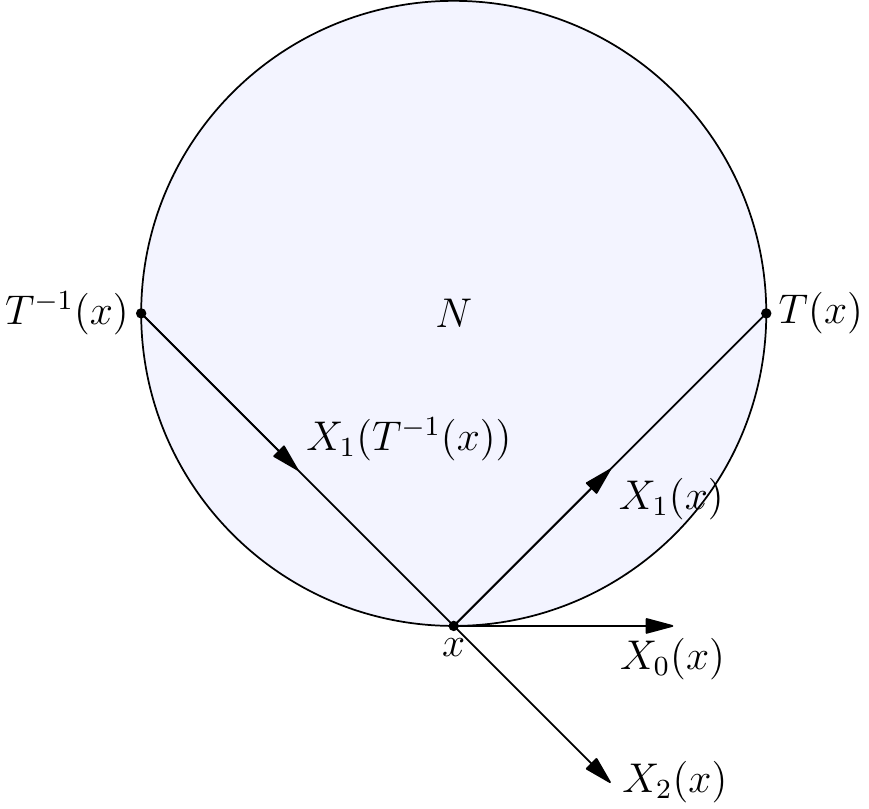}
      \caption{}
      \label{YNT}
  \end{figure}

  Now $X_1(x)$ and $X_2(x)$ separate the circle $\Omega_x N$
  into two segments.
  Let $A(x)$ be the segment containing $X_0(x)$ (which is the shorter segment).
  Then $A = \bigcup_{x \in \partial N} A(x)$ is an annulus with boundaries
  $X_1(\partial N)$ and $X_2(\partial N)$.
  We have a natural diffeomorphism $u : \real / \integer \times [0, 1] \rightarrow A$
  where $u(s, t)$ is the unique vector in $\partial \Omega_{h_1(s)} N$ such that
  \begin{align*}
      t = 
      \frac
      {\text{The angle between $u(x, t)$ and $X_1(h_1(s))$}}
      {\text{The angle between $X_1(h_1(s))$ and $X_2(h_1(s))$}}.
  \end{align*}
  In particular,
  we have $u(s, 0) = X_1(h_1(s))$ and $u(s, 1) = X_2(h_1(s))$.

  Thus we can can break $A$ down to a family of disjoint curves
  $\eta_x : [0, 1] \rightarrow A$
  from $X_1(x)$ to $\alpha(X_1(x))$
  defined as
  \begin{align*}
      \eta_{h_1(s)}(t) = u(s + \varepsilon t, t).
  \end{align*}
  See Figure \ref{fig_eta}.
  \begin{figure}[h!]
      \center
      \includegraphics[width=0.4 \textwidth]{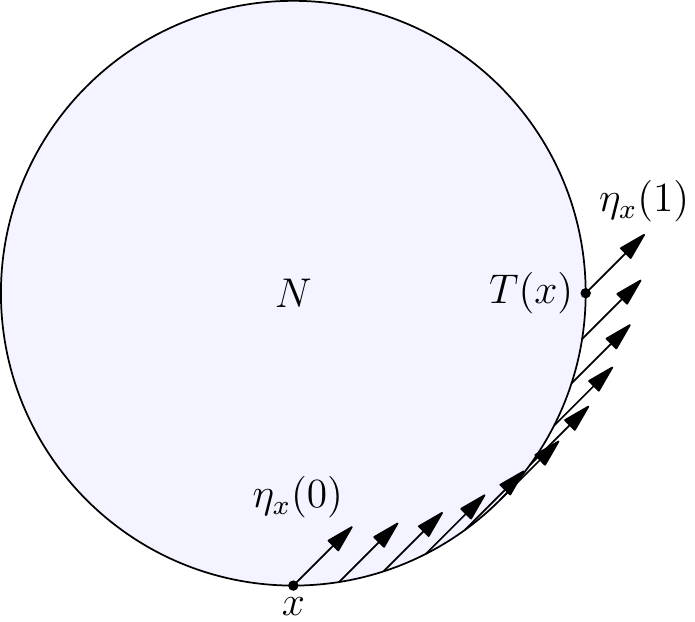}
      \caption{Values of $\eta_x$ from $0$ to $1$}
      \label{fig_eta}
  \end{figure}

  \begin{figure}[h!]
      \center
      \includegraphics[width=0.4 \textwidth]{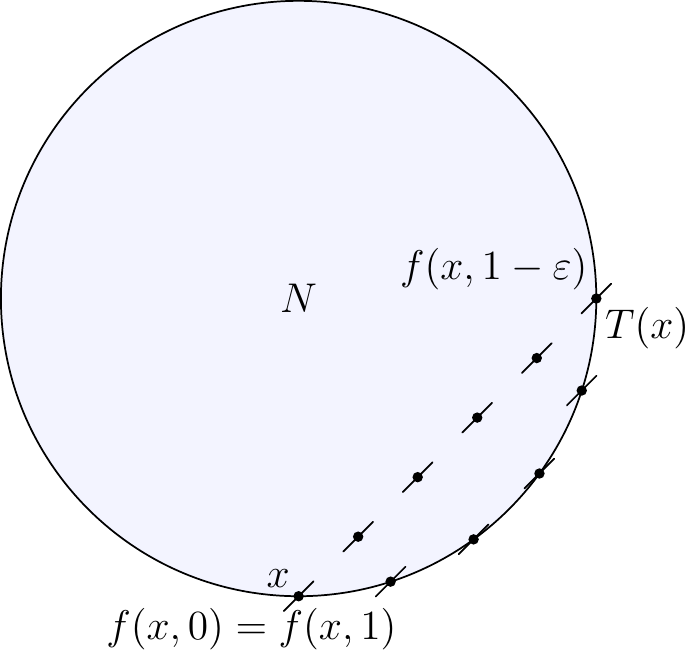}
      \caption{Values of $f(x, \cdot)$ from $0$ to $1$}
      \label{fig_f}
  \end{figure}
  Define
  \begin{align*}
      f : \partial N \times \real / \integer \rightarrow P \Omega N
  \end{align*}
  as
  \begin{align*}
      f(x, t) =
      \begin{cases}
	      P\left(\tilde\gamma_{X_1(x)}(\frac{t}{1 - \varepsilon})\right) & \text{if $0 \le t \le 1 - \varepsilon$,}\\
	      P\left(\eta_x(\frac{1 - t}{\varepsilon})\right) & \text{if $1 - \varepsilon \le t \le 1$.}
      \end{cases}
  \end{align*}
  See Figure \ref{fig_f}.

  \begin{prop}
    $f : \partial N \times \sphere \rightarrow P \Omega N$ is an embedding.
  \end{prop}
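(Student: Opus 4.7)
The plan is to verify that $f$ is continuous, an injective immersion, and then use compactness of the domain to conclude $f$ is an embedding. Continuity amounts to checking that the geodesic and annulus pieces agree on their overlap: at $t = 1 - \varepsilon$, the geodesic piece evaluates to $P(\alpha(X_1(x))) = P(X_2(T(x)))$, while $P(\eta_x(1)) = P(u(h_1^{-1}(x) + \varepsilon, 1)) = P(X_2(T(x)))$; at $t = 0 \equiv 1$, both pieces give $P(X_1(x))$. Smoothness on the interior of each piece is immediate from the construction. To see $f$ is an immersion, I would note that $\partial_t f$ is a nonvanishing tangent to the curve $f(x, \cdot)$, while $\partial_x f$ encodes a genuine variation of the starting data that is linearly independent from $\partial_t f$; at the seam $t = 1-\varepsilon$ one checks the one-sided derivatives separately.

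For injectivity, I would suppose $f(x_1, t_1) = f(x_2, t_2)$ and split into three cases by the locations of $t_1, t_2$. First, if both $t_i \in [1-\varepsilon, 1]$, the points lie in the image of the diffeomorphism $u$; since each fiber $A(p)$ is an arc of angular length strictly less than $\pi$ by the choice of $\varepsilon$, the map $P$ is injective on that fiber, and combined with the fact that the base point determines the first coordinate of $u$, we get $(x_1, t_1) = (x_2, t_2)$. Second, if both $t_i \in [0, 1-\varepsilon)$, the two geodesics $\gamma_{X_1(x_1)}$ and $\gamma_{X_1(x_2)}$ share a point with either equal or opposite tangent vectors in $\Omega N$. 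Equal tangents force the geodesics to coincide up to a time shift; any nonzero shift would place the starting point of one (which lies on $\partial N$ with inward-pointing tangent) at an interior time of the other, contradicting the fact that a geodesic in $N$ cannot meet $\partial N$ transversely at an interior time. Opposite tangents would force $X_1(x) = -X_2(x)$ for some $x \in \partial N$, but this contradicts the choice of $\varepsilon$ small enough to keep the angle between $X_1(x)$ and $X_2(x)$ strictly less than $\pi$. Third, if $t_1 \in [0, 1-\varepsilon)$ and $t_2 \in [1-\varepsilon, 1]$, then $\pi(f(x_2, t_2)) \in \partial N$, so $\gamma_{X_1(x_1)}$ must reach $\partial N$ at an interior time; for $\varepsilon$ small, $\gamma_{X_1(x_1)}$ is a small perturbation of the closed geodesic $\gamma_{X_0(x_1)}$ of length $L$, and by continuity of geodesic flow together with the scattering correspondence $\varphi$ to the simple manifold $M$, it remains in the interior of $N$ except in tiny neighborhoods of its endpoints, so the only possible matches occur at the seam $t_1 \in \{0, 1-\varepsilon\}$, where direct verification yields $(x_1, t_1) = (x_2, t_2)$.

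Since $\partial N \times \sphere$ is compact and $P \Omega N$ is Hausdorff, any continuous injective immersion is automatically a topological embedding, which completes the proof. The main obstacle is the third case of injectivity: ruling out interior contacts of $\gamma_{X_1(x)}$ with $\partial N$ requires $\varepsilon$ small enough that the near-closed geodesic $\gamma_{X_1(x)}$ of length $\approx L$ stays in the interior of $N$ away from its endpoints, which in turn relies on continuous dependence of geodesics on initial data and the scattering correspondence with the simple manifold $M$.
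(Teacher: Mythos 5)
Your overall architecture mirrors the paper's: check injectivity case by case and invoke compactness of $\partial N \times \sphere$ to upgrade a continuous injection to a homeomorphism onto its image. Your first case (both parameters in the annulus piece) agrees with the paper: injectivity follows because each fiber $A(x)$ has angular length strictly less than $\pi$, so $P$ is injective on $A$, and $u$ is a diffeomorphism. Your second case (both in the geodesic piece) is also sound; you spell out the possibility of orientation reversal and kill it via $X_1(x) = -X_2(x)$, which the paper leaves implicit behind the blanket assertion that distinct geodesics intersect transversely.

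Your third case, however, is where you have misdiagnosed the situation. You declare it ``the main obstacle'' and argue that for $\varepsilon$ small the near-closed geodesic $\gamma_{X_1(x)}$ ``remains in the interior of $N$ except in tiny neighborhoods of its endpoints,'' citing continuous dependence on initial data and the scattering correspondence with $M$. None of that is needed, and the quoted claim is in fact wrong as stated: $\gamma_{X_1(x)}$ meets $\partial N$ only at the two endpoints themselves, not on ``tiny neighborhoods'' of them. The reason is definitional. For $X \in \partial_+\Omega N$, the paper defines $\gamma_X$ by extending the geodesic ``as long as possible until it touches the boundary again'' and sets $\tau_X$ to be that length; so by construction the image of $\gamma_{X_1(x)}|_{(0,\tau)}$ lies in $N\setminus\partial N$. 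Hence $\pi\circ f(x,t)\in N\setminus\partial N$ for all $t\in(0,1-\varepsilon)$, while $\pi\circ f(x',t')\in\partial N$ for all $t'\in[1-\varepsilon,1]$ (and for $t'=0$). That boundary-versus-interior dichotomy is the entire content of the paper's one-line argument for this case, and it reduces the mixed case to $t = 0$, which the periodic identification $f(x,0)=P(X_1(x))=f(x,1)$ returns to your first case. No smallness of $\varepsilon$, no perturbation of the limiting closed geodesic, and no appeal to the simplicity of $M$ is involved at this step; you have introduced machinery that is both unnecessary and, taken literally, incorrect.
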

  \begin{proof}
  Suppose that $f(x, t) = f(x', t')$.
  If $0 < t < 1 - \varepsilon$,
  then $\pi(f(x, t))$ is not on the boundary,
  so $\pi(f(x', t'))$ is also not on the boundary,
  which implies that $0 < t' < 1 - \varepsilon$.
  However,
  $\pi \circ f(x, \cdot) |_{(0, 1 - \varepsilon)}$
  and 
  $\pi \circ f(x', \cdot) |_{(0, 1 - \varepsilon)}$
  are geodesics in $N$,
  so they always intersect transversely,
  and thus $f(x, t)$ and $f(x', t')$
  are equal if and only if $(x, t) = (x', t')$.
  If $1 - \varepsilon \le t \le 1$,
  then $1 - \varepsilon \le t' \le 1$.
  Now $f(x, t) = P(\eta_x(t))$ and $f(x', t') = P(\eta_{x'}(t'))$,
  where $P : A \rightarrow P \Omega N$ is injective
  because the angle between $X_1(x)$ and $X_2(x)$ is smaller than $\pi$.
  Hence $f(x, t) = f(x', t')$ if and only if $\eta_x(t) = \eta_{x'}(t)$,
  which, by our definition of $\eta$,
  is equivalent to $(x, t) = (x', t')$.
  Therefore, $f$ is an embedding of the torus $\real / \integer \times \partial N$
  into the solid torus $P \Omega N$.
\end{proof}

\begin{prop}
  $f(x, \cdot)$ is contractible in $P \Omega N$.
\end{prop}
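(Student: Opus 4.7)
The plan is to deform $f(x, \cdot)$ continuously in $P\Omega N$ to the projectivized unit tangent vector field $P \circ \tilde{\gamma}_{X_0(x)}$ of the closed tangent geodesic at $x$, and then invoke Proposition~\ref{prop:trivial}.

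First I would interpolate $X_1(x)$ to $X_0(x)$ through a family $X^{(s)}(x) \in \partial \Omega_x N$, $s \in [0,1]$, that lies in $\partial_+ \Omega_x N$ for $s \in [0,1)$ and equals $X_0(x)$ at $s = 1$. For $s < 1$ the geodesic $\gamma_{X^{(s)}(x)}$ has length $L + \tau_M(\varphi^{-1}(X^{(s)}(x)))$, so by the first variation of arc length it converges uniformly to the closed geodesic $\gamma_{X_0(x)}$ of length $L$ as $s \to 1$. In particular $\alpha(X^{(s)}(x))$ extends continuously to $s=1$, landing on the return tangent vector of $\gamma_{X_0(x)}$, which lies in $\partial_0 \Omega_x N$ and hence equals $\pm X_0(x)$.

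Second, I would construct a compatible family of connecting arcs $\eta_x^{(s)}$ joining $X^{(s)}(x)$ to $\alpha(X^{(s)}(x))$, obtained by shrinking the original annular construction for $\eta_x$ along the deformation. Because the angle between $X_1(x)$ and $X_2(x)$ was arranged to be smaller than $\pi$, this short side of the fiber circle deforms continuously (without wrapping around) as $s$ varies. The concatenation $f^{(s)}(x, \cdot) := (P \circ \tilde{\gamma}_{X^{(s)}(x)}) * (P \circ \eta_x^{(s)})$ then provides a continuous homotopy from $f(x, \cdot)$ at $s = 0$ to a loop at $s = 1$ consisting of $P \circ \tilde{\gamma}_{X_0(x)}$ followed by a short arc in the single fiber $P\Omega_x N$ connecting the two projectivized vectors $P(X_0(x))$ and $P(\alpha(X_0(x)))$.

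Third, observe that the leftover fiber arc is itself null-homotopic: both endpoints equal $P(X_0(x))$ after projectivization (since $\alpha(X_0(x)) = \pm X_0(x)$), so the arc is a loop in the circle $P\Omega_x N$ that lies strictly within the short side of angular measure less than $\pi$, hence contractible in that fiber. Combining with the previous homotopy, $f(x, \cdot)$ is freely homotopic in $P\Omega N$ to $P \circ \tilde{\gamma}_{X_0(x)}$, which is contractible by Proposition~\ref{prop:trivial}. The main obstacle is verifying continuity at $s=1$: the scattering relation $\alpha$ is only defined on $\partial_+ \Omega N$, and one must check that its extension to $X_0(x) \in \partial_0 \Omega N$ agrees with the tangent vector of the limiting closed geodesic, together with the simultaneous control that the family $\eta_x^{(s)}$ stays on the short side of the fiber circle throughout the deformation.
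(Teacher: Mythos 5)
Your proposal is correct and takes essentially the same approach as the paper: deform $f(x,\cdot)$ continuously to $P\circ\tilde\gamma_{X_0(x)}$ and invoke Proposition~\ref{prop:trivial}. The paper realizes this homotopy simply by letting the construction parameter $\varepsilon\to 0$ (which sends $T$ to the identity and $X_1,X_2$ to $X_0$), whereas you interpolate the initial direction by a separate parameter $s$; these amount to the same deformation.
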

\begin{proof}
  We shall show that $f(x, \cdot)$ is homotopic to $P \circ \tilde\gamma_{X_0(x)}$.
  As $\varepsilon \rightarrow 0$,
  $T$ converges to the identity map,
  $X_1$ and $X_2$ converge to $X_0$,
  and $f(x, \frac{t}{1 - \varepsilon})$ converges to $P(\tilde\gamma_{X_0}(t))$ for $t \in [0, 1 - \varepsilon]$.
  Thus $f(x, \cdot)$ is homotopic to $P \circ \tilde\gamma_{X_0(x)}$.
\end{proof}

\begin{prop}
  $f(x, \cdot)$ is isotopically trivial in $P \Omega N$.
  \label{prop:trivial'}
\end{prop}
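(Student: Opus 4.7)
The plan is to upgrade the null-homotopy of $f(x,\cdot)$ (from the previous proposition) to isotopic triviality by applying $3$-manifold topology to the embedded torus $T := f(\partial N \times \sphere)$. Since $N$ is a disk (Proposition \ref{prop:disk}), the unit tangent bundle $\Omega N$ is trivial, so $P\Omega N \cong N \times \sphere$ is a solid torus with $\pi_1(P\Omega N) \cong \integer$. First I would perform a small isotopy pushing the portion of $T$ that lies on $\partial P\Omega N$ (contributed by the arcs $P \circ \eta_x$) slightly into the interior, so that $T$ becomes an embedded torus in the interior of this solid torus.

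Next I would invoke the standard fact that every embedded torus in a solid torus is compressible, obtaining a compressing disk $D \subset P\Omega N$ whose boundary $c$ is an essential simple closed curve on $T$ that is null-homotopic in $P\Omega N$. I would then identify $\pi_1(T) \cong \integer^2$ via the coordinate generators $a := [f(x,\cdot)]$ and $b := [f(\cdot, 0)]$. The previous proposition gives $f_{*}(a) = 0$, so the essential step is to show $f_{*}(b) \neq 0$. For small $\varepsilon$ the vector $X_1(x)$ is close to $X_0(x)$, so the loop $x \mapsto P(X_1(x))$ is freely homotopic in $P\Omega N$ to $x \mapsto P(X_0(x))$. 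Regarded as a section over $\partial N$ of the trivial bundle $\Omega N \cong N \times \sphere$, the loop $X_0|_{\partial N}$ has class in $\pi_1(\Omega N) \cong \integer$ equal to the rotation index of the unit tangent along $\partial N$, which is $\pm 1$ because $N$ is a disk. Since $P \colon \Omega N \to P\Omega N$ restricts on each fiber to the degree-two covering $\sphere \to \sphere$, the induced map $P_{*}$ is multiplication by $2$ on $\pi_1$, yielding $f_{*}(b) = \pm 2 \neq 0$.

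Because $a$ is primitive in $\integer^2$ and $\ker f_{*}$ is the cyclic subgroup generated by $a$, every essential simple closed curve on $T$ that is null-homotopic in $P\Omega N$ is isotopic on $T$ to $f(x,\cdot)$. Applying this to $c$ and sliding $\partial D$ along such an isotopy within $T$ would produce an embedded disk in $P\Omega N$ bounded by $f(x,\cdot)$, yielding isotopic triviality. The main obstacle I foresee is justifying the $\pi_1$-computation $f_{*}(b) = \pm 2$ cleanly through the double cover $P$, together with the technicality of first removing $T$ from $\partial P\Omega N$; the remainder is the standard compression-disk argument in a solid torus.
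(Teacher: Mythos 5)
Your proposal is correct and follows essentially the same route as the paper: compress the embedded torus $T = f(\partial N \times \sphere)$, then argue that the boundary of the compressing disk must be isotopic on $T$ to $f(x,\cdot)$ because $f_*([f(x,\cdot)]) = 0$ while $f_*([f(\cdot, 0)]) \neq 0$. The two places where you deviate are both minor substitutions of one standard fact for another: you invoke compressibility of embedded tori in a solid torus directly, whereas the paper instead cites the criterion that a two-sided surface is incompressible iff $\pi_1$-injective (which $\integer^2 \to \integer$ manifestly is not); and you compute $f_*([f(\cdot, 0)]) = \pm 2$ explicitly via the rotation index of $X_0$ along $\partial N$ and the degree-$2$ behavior of $P$ on fibers, whereas the paper instead lets $\varepsilon \to 0$ so that $f(\cdot, 0)$ limits to the projectivized unit tangent field along $\partial N$ and then cites Proposition \ref{prop:embedded} to get non-contractibility. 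Your attention to first pushing $T$ off $\partial(P\Omega N)$ is a worthwhile extra precaution that the paper elides.
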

\begin{proof}
  Define $\beta_1 : \real / \integer \rightarrow \partial N \times \real / \integer$ as
  \begin{align*}
      \beta_1(t) = (h_1(t), 0),
  \end{align*}
  and define $\beta_2 : \real / \integer \rightarrow \partial N \times \real / \integer$ as
  \begin{align*}
      \beta_2(t) = (h_1(0), t).
  \end{align*}
  Let
  $p = \beta_1(0)$,
  then
  \begin{align*}
      \pi_1(\partial N \times \real / \integer, p)
      = \{[\beta_1]_p^k[\beta_2]_p^l : k, l \in \integer\}
      \simeq \integer^2.
  \end{align*}
  Let  
  \begin{align*}
      f_* : \pi_1(\partial N \times \real / \integer, p) \rightarrow \pi_1(P \Omega N, f(p))
  \end{align*}
  be the induced homomorphism between fundamental groups.
  Since $\pi_1(P \Omega N, f(p)) = \integer$,
  $f_*$ is not injective.
  Since \cite[Corollary 3.3]{hatcher2000notes} a two-sided surface $f$ is incompressible if and only if $f_*$ is injective,
  the torus $f(\partial N \times \real / \integer)$ has a compressing disk embedded in $P \Omega N$.

  As $\varepsilon \rightarrow 0$,
  $f \circ \beta_1$ converges to the projectivized unit tangent vector field along $\partial N$,
  so
  $f_*([\beta_1]_p) \neq 0$ by Proposition \ref{prop:embedded}.
  Since $f(h_1(0), \cdot)$ is homotopic to $\gamma_{X_0(h_1(0))}$,
  both of them are contractible in $P \Omega N$ by Proposition \ref{prop:trivial}.
  Since $\pi_1(P \Omega N, f(p)) = \integer$,
  there is no difference between free homotopy and based homotopy,
  hence
  $f_*([\beta_2]_p) = 0$.

  Now, let $B$ be a compressing disk of the torus $f(\partial N \times \real / \integer)$,
  then $\partial B$ is a circle embedded $f(\partial N \times \real / \integer)$ which is contractible in $P \Omega N$.
  It follows that $\partial B$ is homotopic to $f \circ \beta_2$ on $f(\real / \integer \times S)$.
  Any two simple closed essential curves on a surface
  are isotopic to each other if and only if they are freely homotopic to each other \cite{epstein1966curves}.
  Therefore, $\partial B$ is isotopic to $f \circ \beta_2 = f(h_1(0), \cdot)$ on $f(\real / \integer \times S) \subset P \Omega N$.
  Since $\partial B$ bounds a disk $B$ in $P \Omega N$,
  $\partial B$ is isotopically trivial,
  and so is $f(h_1(0), \cdot)$ in $P \Omega N$.
  This completes the proof of Proposition \ref{prop:trivial'}
\end{proof}

Notice (see below) that Proposition \ref{prop:trivial'} contradicts Theorem \ref{thm:knot_2} when $L > 0$, which proves Theorem \ref{thm:simple}.

\begin{proof}[Proof of Theorem \ref{thm:simple}]
  Suppose that $L > 0$.
  Pick any $x \in \partial M$.
  Let $\gamma_1 = \gamma_{X_1(x)}$
  and $\beta_2 = P \circ \eta_x$.

  If $\gamma_1$ has no self-intersections,
  then $\gamma_{X_0(x)}$,
  the limit of $\gamma_1$ as $\varepsilon \rightarrow 0$,
  also has no self-intersections.
  By Proposition \ref{prop:embedded},
  $P \circ \tilde\gamma_{X_0(x)}$
  is not contractible in $P \Omega N_1$,
  which contradicts Proposition \ref{prop:trivial}.
  Therefore, $\gamma_1$ has at least one self-intersection.

  Let $\beta_1 = P \circ \tilde\gamma_1$
  Notice that $\gamma_1$ and $P \circ \beta_2$ have no intersections except at end points and that $f(x, \cdot)$ is the closed curve obtained by gluing $\beta_1$ and $\beta_2$.
  By Theorem \ref{thm:knot_2},
  $f(x, \cdot)$ is isotopically non-trivial in $P \Omega N_1$,
  which contradicts Proposition \ref{prop:trivial'}.
  Thus $L = 0$,
  which finishes the proof of Theorem \ref{thm:simple}.
\end{proof}
\appendix
\section{Approximating smooth knots by piecewise linear knots}
\label{sec:approx}
The goal of this section is to prove the following lemma,
which allows us to approximate knot isotopies using piecewise-linear knot isotopies.
\begin{lem}
  Suppose that there is a continuous knot isotopy $G : [0, 1] \times \sphere \rightarrow P \Omega N$.
  Then there is continuous family of knot isotopies $H : [0, 1] \times [0, 1] \times \sphere \rightarrow P \Omega N$
  such that $H(0, \cdot, \cdot) = G$,
  that $H(l, s, \cdot)$ is a knot embedded in $P \Omega N$ for each $(l, s) \in [0, 1] \times [0, 1]$,
  and that $H(1, s, \cdot)$ is a piecewise linear knot for each $s \in [0, 1]$.
  \label{lem:isotopy}
\end{lem}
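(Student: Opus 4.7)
The plan is to subdivide $\sphere$ into $n$ equal arcs for a sufficiently large $n$ and, on each arc of each knot $G(s,\cdot)$, interpolate continuously in a parameter $l \in [0,1]$ from $G(s,\cdot)$ to the minimal linear curve $\gamma^0$ joining its endpoints. The construction hinges on choosing $n$ large enough that embeddedness is preserved throughout the deformation, which is the main obstacle.

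Two preliminary uniform estimates are needed. Since $G$ is continuous on the compact space $[0,1]\times\sphere$, it is uniformly continuous: for every $\varepsilon_0 > 0$ there is an $n_0$ with $d^0(G(s,t), G(s,t')) < \varepsilon_0$ whenever $|t-t'|\le 1/n_0$, uniformly in $s$. Since each $G(s,\cdot)$ is an embedding and $[0,1]$ is compact, a standard compactness argument gives a positive lower bound
\[
\rho(\eta) := \min\bigl\{\, d(G(s,t), G(s,t')) : s\in[0,1],\ |t-t'| \ge \eta \,\bigr\} > 0
\]
for every $\eta > 0$. If $G$ is only continuous, I would first prepend a $C^0$-small smoothing isotopy to $H$, so that uniform $C^1$ bounds on the family $\{G(s,\cdot)\}_{s\in[0,1]}$ become available for the local step below.

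Choose $n$ so large that the associated $\varepsilon_0$ is smaller than $\mathrm{inj}(N)$, $\pi/2$, and $\rho(2/n)/3$. Let $p_k(s) = G(s, k/n)$ and let $\lambda_k(s) : [k/n, (k+1)/n]\to P\Omega N$ be the reparametrization of $\gamma^0_{p_k(s),\,p_{k+1}(s)}$ (well defined by the first inequality). In the Riemannian normal chart around $p_k(s)$, which contains both $G(s,t)$ and $\lambda_k(s)(t)$ for $t \in [k/n, (k+1)/n]$, define
\[
H(l, s, t) = \exp_{p_k(s)}\!\Bigl( (1-l)\,\exp_{p_k(s)}^{-1}(G(s,t)) + l\, \exp_{p_k(s)}^{-1}(\lambda_k(s)(t)) \Bigr).
\]
Continuity at the subdivision points $t = k/n$ is automatic since both $G(s, k/n) = \lambda_k(s)(k/n) = p_k(s) = \lambda_{k-1}(s)(k/n)$, so the two one-sided definitions agree. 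By construction $H(0,s,\cdot) = G(s,\cdot)$, $H(1,s,\cdot)$ is piecewise linear with vertices $\{p_k(s)\}$, and $H$ is jointly continuous.

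It remains to verify that each $H(l,s,\cdot)$ is an embedding. For $t, t'$ with $|t-t'| \ge 2/n$, the triangle inequality combined with $d(H(l,s,\cdot), G(s,\cdot)) < \varepsilon_0$ pointwise and $d(G(s,t), G(s,t')) \ge \rho(2/n) > 3\varepsilon_0$ forces $H(l,s,t) \ne H(l,s,t')$. For $|t-t'| < 2/n$ both points lie in the union of two adjacent normal charts, and local injectivity of $H(l,s,\cdot)$ on such short arcs follows from the $C^1$-closeness of $G(s,\cdot)$ and $\lambda_k(s)$ together with the non-vanishing of their tangent vectors in those charts. This last local injectivity is the heart of the matter: it forces $n$ and $\varepsilon_0$ to be chosen relative not only to the modulus of continuity but also to uniform $C^1$-bounds on $\{G(s,\cdot)\}_{s\in[0,1]}$, which is where the preliminary smoothing and compactness of the parameter interval combine to give the required uniform $H$.
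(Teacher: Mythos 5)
Your overall plan---subdivide $\sphere$ into $n$ arcs, interpolate each arc toward the minimal linear curve joining its endpoints, and choose $n$ large enough that embeddedness survives---is the same strategy as the paper's. The interpolation itself is different: the paper grows the linear piece from the left endpoint of each subinterval (setting $H(l,s,(k+t)/n)=\gamma^0_{G(s,k/n),G(s,(k+l)/n)}(t/l)$ for $t<l$ and $=G(s,(k+t)/n)$ for $t\ge l$), whereas you take a pointwise convex combination of $G(s,t)$ and $\lambda_k(s)(t)$ in the normal chart at $p_k(s)$. Either homotopy is reasonable and both have the right endpoints.

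The gap is in the embeddedness verification, and it is exactly at the place you yourself flag as ``the heart of the matter.'' For $|t-t'|<2/n$ your argument appeals to ``$C^1$-closeness of $G(s,\cdot)$ and $\lambda_k(s)$ together with the non-vanishing of their tangent vectors in those charts,'' but this does not dispose of the dangerous case where $t$ lies in $[k/n,(k+1)/n]$ and $t'$ in $[(k+1)/n,(k+2)/n]$. There the curve $H(l,s,\cdot)$ has a corner at $t=(k+1)/n$ (the linear pieces $\lambda_k(s)$ and $\lambda_{k+1}(s)$ meet at an angle, as do the interpolated curves for $0<l<1$), so a local-immersion argument on a single arc does not apply, and a nonzero tangent vector on each piece does not by itself forbid the two pieces from crossing near the common vertex. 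The paper closes precisely this case with a quantitative angle estimate: it shows that a putative self-intersection with $|k_1-k_2|$ small would create a geodesic triangle $p_1,p_2,p_3$ whose angle sum $A(p_1,p_2,p_3)$ exceeds $2\pi-4\varepsilon$, contradicting Corollary \ref{cor:triangle} (which says small triangles have angle sum near $\pi$). Some estimate of this kind---or an explicit uniform $C^1$ bound that makes the union of two adjacent interpolated arcs a graph over a fixed direction in a common chart---is what your proof is missing. Two smaller points: your bound $d(H(l,s,t),G(s,t))<\varepsilon_0$ needs a line of justification about metric distortion in the normal chart (in general one gets a constant multiple of $\varepsilon_0$, which is harmless but should be stated); and the ``prepend a smoothing isotopy'' step, while standard, must be done so as to preserve embeddedness at every intermediate time, which is the same kind of issue you are trying to solve.
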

The following proposition and its corollaries will be our main tool
used in this section.
\begin{prop}
  Assume that $K$ is a compact smooth manifold and $M$ is a Riemannian manifold.
  Suppose that $G : K \times [a, b] \rightarrow M$ is smooth
  and that each $G(s, \cdot)$ is a smooth curve whose speed is never $0$.
  For any $\varepsilon > 0$,
  there is $\delta > 0$ such that the angles between $G(s, \cdot)|_{[t_1, t_2]}$ 
  and the minimal geodesic connecting its end points are smaller than $\varepsilon$
  whenever $|t_1 - t_2| < \delta$.
  \label{prop:close}
\end{prop}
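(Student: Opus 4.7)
The plan is to reduce the statement to a uniform $C^{2}$-estimate on $G$ via compactness, and then to a Taylor-expansion argument in normal coordinates.

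First, I would establish the uniform constants. Since $K \times [a,b]$ is compact and $G$ is smooth, the image $G(K \times [a,b])$ lies in a compact subset $M_0 \subset M$. On $M_0$ there is a uniform positive lower bound $r_0$ on the injectivity radius and a uniform upper bound on $|\mathrm{Sec}|$. Moreover, since $|G_s'(t)|$ is continuous on $K \times [a,b]$ and never zero, there exist constants $c_0 > 0$ and $C_0 < \infty$ with $c_0 \le |G_s'(t)| \le C_0$ for all $(s,t)$. By smoothness of $G$ and compactness, $|\nabla_t G_s'(t)| \le C_1$ uniformly as well.

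Next, I would fix $(s, t_1) \in K \times [a,b]$ and pass to normal coordinates centered at $G(s,t_1)$. Provided $|t_2 - t_1| < r_0 / (2C_0)$, the entire curve $G(s,\cdot)|_{[t_1,t_2]}$ stays inside the normal neighborhood, and so does the minimal geodesic $\gamma_{s,t_1,t_2}$ connecting its endpoints. In these coordinates, $\gamma_{s,t_1,t_2}$ is a straight segment with initial tangent vector proportional to $\exp_{G(s,t_1)}^{-1}(G(s,t_2))$, and one computes directly from Taylor's theorem that
\begin{align*}
\exp_{G(s,t_1)}^{-1}(G(s,t_2)) = (t_2 - t_1)\, G_s'(t_1) + R(s, t_1, t_2),
\end{align*}
with $\|R(s,t_1,t_2)\| \le C_2 (t_2 - t_1)^2$ for a constant $C_2$ depending only on $C_0$, $C_1$, and the curvature bound. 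Similarly, for any $t \in [t_1, t_2]$, the parallel transport of $G_s'(t)$ along $G(s,\cdot)$ back to $G(s,t_1)$ differs from $G_s'(t_1)$ by at most $C_3 |t - t_1|$. Combining these estimates with $|G_s'(t_1)| \ge c_0$ gives an upper bound on the Riemannian angle of the form $C_4 (t_2 - t_1)/c_0$ between (the parallel transport of) $G_s'(t)$ and the initial velocity of $\gamma_{s,t_1,t_2}$; and by parallel transporting along $\gamma_{s,t_1,t_2}$ one obtains the same sort of bound at any interior point of the geodesic.

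Finally, because every constant produced above ($r_0$, $c_0$, $C_0$, $C_1$, $C_2$, $C_3$, $C_4$) is uniform over $K \times [a,b]$, choosing $\delta := \min(r_0/(2C_0),\, c_0 \varepsilon / C_4)$ makes the angle smaller than $\varepsilon$ simultaneously for all $(s,t_1,t_2)$ with $|t_1 - t_2| < \delta$. The main technical point is ensuring that the errors in passing between Euclidean angles in normal coordinates and Riemannian angles in $M$ are controlled uniformly; this is standard from the comparison $g_{ij}(x) = \delta_{ij} + O(|x|^2)$ in normal coordinates, with the implicit constant depending only on the curvature bound. The rest is routine book-keeping of Taylor remainders.
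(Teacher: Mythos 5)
Your argument is correct but takes a genuinely different route from the paper's. The paper avoids normal coordinates entirely: it defines the signed arc length $L(s,t_1,t_2)$ and the signed chord distance $D(s,t_1,t_2)$, sets $Q(s,t_1,t_2) = \frac{\partial_{t_2} D}{\partial_{t_2} L}$, notes $Q(s,t_1,t_1)=1$, and proves $Q$ is continuous near the diagonal of $[a,b]\times[a,b]$ (because $D^2$ is smooth within the injectivity radius). Compactness of $K$ then yields a uniform $\delta$ with $|Q-1|<\varepsilon_1$ on $K\times V_\delta$, and the first variation of arc length identifies $Q$ with $\cos\theta$ at the endpoint $G(s,t_2)$, giving $\theta<\varepsilon$ (the angle at $G(s,t_1)$ follows from the symmetry of $L$ and $D$). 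In contrast, you Taylor-expand $\exp_{G(s,t_1)}^{-1}(G(s,t_2))$ in normal coordinates with uniform $C^2$ and curvature bounds, and chase explicit constants $r_0, c_0, C_0, C_1, C_2, C_3, C_4$ to extract a concrete $\delta$. Both work; the paper's route is cleaner in that it needs only continuity of $Q$ (no curvature bounds, no coordinate choice, no second-derivative estimates), while yours has the merit of giving a quantitative $\delta$. One caveat in your write-up: the sentence about ``the same sort of bound at any interior point of the geodesic'' is murky — the curve $G(s,\cdot)|_{[t_1,t_2]}$ and the minimal geodesic meet only at the two endpoints, so the angles in question are precisely the two endpoint angles, which your Taylor estimate already controls; the parallel-transport remark is not needed and should either be dropped or rephrased as the observation that the bound at $t_1$ transfers to $t_2$ by symmetry.
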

\begin{proof}
  Pick any $\varepsilon > 0$.
  There is $\varepsilon_1 > 0$ such that
  $|\theta| < \varepsilon$ if $|\cos(\theta) - 1| < \varepsilon_1$
  and if $|\theta| \le \pi$.

  Define $L : K \times [a, b] \times [a, b] \rightarrow \real$ as
  \begin{align*}
    L(s, t_1, t_2) =
    \begin{cases}
      \ell(G(s, \cdot)|_{[t_1, t_2]}) & \text{if $t_2 \ge t_1$,}\\
      -L(s, t_1, t_2) & \text{if $t_2 < t_1$},
    \end{cases}
  \end{align*}
  where $\ell(G(s, \cdot)|_{[t_1, t_2]})$ is the length of $G(s, \cdot)|_{[t_1, t_2]}$.
  Similarly, define
  $D : K \times [a, b] \times [a, b] \rightarrow \real$ as
  \begin{align*}
    D(s, t_1, t_2) =
    \begin{cases}
      d(G(s, t_1), G(s, t_2)) & \text{if $t_2 \ge t_1$,}\\
      -D(s, t_1, t_2) & \text{if $t_2 < t_1$}.
    \end{cases}
  \end{align*}
  For any fixed $s$,
  we have 
  \begin{align}
    L(s, t_1, t_2) - D(s, t_1, t_2) 
    = o((t_2 - t_1)^2)
    \label{eq:close}
  \end{align}
  as $t_2 \rightarrow t_1$.
  Put
  \begin{align*}
    Q(s, t_1, t_2) = \frac{\frac{\partial}{\partial t_2}D(s, t_1, t_2)}{\frac{\partial}{\partial t_2}L(s, t_1, t_2)}.
  \end{align*}
  Then we have $Q(s, t_1, t_1) = 1$ by \eqref{eq:close}.

  We shall show the $Q$ is continuous near $K \times \Delta [a, b]$ where $\Delta[a, b]$ is the diagonal of $[a, b] \times [a, b]$.
  Since $G$ is continuous and $K$ is compact,
  there is $\delta_1 > 0$ such that $d(G(s, t_1), G(s, t_2)) < inj(M)$
  if $|t_1 - t_2| < \delta_1$.
  Since the squared distance function is smooth within the injectivity radius,  $D^2$ is smooth on $K \times V_{\delta_1}$ where $V_{\delta_1} = \{(t_1, t_2) \in [a, b] \times [a, b] : |t_1 - t_2| < \delta_1\}$ and hence $\frac{\partial}{\partial t_2}D$ is continuous.
  Also, $\frac{\partial}{\partial t_2}L$ is obviously continuous on $K \times [a, b] \times [a, b]$ (since $L(s, t_1, \cdot)$ is just the signed arc length).
  Therefore, $Q$ is continuous on $K \times V_{\delta_1}$.
  Since $Q$ is continuous and $K$ is compact,
  there is $\delta \in (0, \delta_1)$ such that $|Q(s, t_1, t_2) - Q(s, t_1, t_1)| < \varepsilon_1$ if $|t_1 - t_2| < \delta$,
  that is,
  $|Q - 1| < \varepsilon_1$ on $K \times V_\delta$.

  For any $(s, t_1, t_2) \in K \times {V_\delta \setminus \Delta[a, b]}$,
  let $\theta(s, t_1, t_2)$ be the angle between
  $G(s, \cdot)|_{[t_1, t_2]}$ and the minimal geodesic connecting its end points
  at the endpoint $G(s, t_2)$.
  By the first variation of arc length,
  \begin{align*}
    \cos\theta(s, t_1, t_2) = \frac{\frac{\partial}{\partial t_2}D(s, t_1, t_2)}{\frac{\partial}{\partial t_2}L(s, t_1, t_2)} = Q(s, t_1, t_2).
  \end{align*}
  Hence $\theta < \varepsilon$ on $K \times V_\delta$.
  Therefore
  the angles between $G(s, \cdot)|_{[t_1, t_2]}$
  and the minimal geodesic connecting its end points are smaller than $\varepsilon$
  whenever $|t_1 - t_2| < \delta$.
\end{proof}

For any compact Riemannian surface $N$,
we can apply the above proposition to unit-speed linear curves of length $\le 1$ in $P \Omega N$ (which has the Sasakian metric on it), which is a compact family of curves in $P \Omega N$.
For any $p, q \in P \Omega N$ such that $d(p, q) < inj(P \Omega N)$ and that $d_h(p, q) < inj(N)$,
let $\gamma_{p, q}$ be the minimal geodesic connecting $p$ and $q$.
Recall that $\gamma^0_{p, q}$ is the minimal linear curve connecting $p$ and $q$.
\begin{cor}
  Assume that $N$ is a compact Riemannian manifold.
  For any $\varepsilon > 0$,
  there is $\delta > 0$ such that the angles between
  $\gamma_{p, q}$ and $\gamma^0_{p, q}$ are smaller than $\varepsilon$
  for any $p, q \in P \Omega N$
  such that $0 < d(p, q) < \delta$.
  \label{cor:linear}
\end{cor}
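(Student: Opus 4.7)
The plan is to deduce Corollary~\ref{cor:linear} directly from Proposition~\ref{prop:close}, by assembling all short unit-speed linear curves in $P\Omega N$ into a single compact smooth family $G(s,t)$ of the form required by that proposition.

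First, I would take the parameter space to be $K := T_1 P\Omega N$, the unit sphere bundle of $P\Omega N$ with respect to the Sasakian metric; since $N$ is compact, so is $P\Omega N$, hence $K$ is a compact smooth manifold. For each $v \in K$ based at some $p \in P\Omega N$, decompose $v = v_H + v_V$ into its horizontal and vertical parts and let $\gamma_v$ be the unique unit-speed linear curve starting at $p$ in the direction $v$: namely, $\pi \circ \gamma_v$ is the geodesic in $N$ with initial velocity $d\pi(v_H)$, and $\gamma_v$ rotates vertically at the constant angular rate $|v_V|$. Picking $r > 0$ smaller than $\mathrm{inj}(N)$ ensures that the map $G : K \times [0,r] \to P\Omega N$, defined by $G(v,t) := \gamma_v(t)$, is smooth, and each slice $G(v,\cdot)$ has constant unit speed.

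Next, I would apply Proposition~\ref{prop:close} to this $G$ with the given $\varepsilon$ to obtain $\delta_0 > 0$ such that whenever $|t_1 - t_2| < \delta_0$, the angle between $G(v,\cdot)|_{[t_1,t_2]}$ and the minimal geodesic in $P\Omega N$ joining its endpoints is smaller than $\varepsilon$. To translate this into the statement of the corollary, I note that for $p, q \in P\Omega N$ with $d(p,q)$ small, the Sasakian length of $\gamma^0_{p,q}$ is bounded by $\ell(\gamma^0_{p,q}) \le d_h(p,q) + d_v(p,q)$ (since horizontal and vertical directions are Sasakian-orthogonal), and both of these quantities tend to $0$ with $d(p,q)$: indeed $d_h \le d$ because $\pi$ is $1$-Lipschitz, and $d_v$ is likewise controlled by $d$ near the diagonal.

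Thus, choosing $\delta > 0$ small enough, $d(p,q) < \delta$ forces $\ell(\gamma^0_{p,q}) < \delta_0$, so the unit-speed reparametrization of $\gamma^0_{p,q}$ is exactly a piece $G(v,\cdot)|_{[0,T]}$ with $T < \delta_0$ for some $v \in K$; the proposition then yields the desired bound on the angle between $\gamma^0_{p,q}$ and $\gamma_{p,q}$, since reparametrization leaves tangent directions unchanged. The most delicate step is the preparatory identification of unit-speed linear curves with a smooth family over the compact base $K$, together with the length estimate $\ell(\gamma^0_{p,q}) \to 0$ as $d(p,q) \to 0$; once these two points are verified, the corollary is immediate from Proposition~\ref{prop:close}.
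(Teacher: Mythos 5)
Your proof is correct and takes essentially the same approach as the paper: the paper's own justification of this corollary is just the one-sentence remark preceding it, that Proposition~\ref{prop:close} applies to the compact family of unit-speed linear curves in $P\Omega N$. You have spelled out that argument by parametrizing the family over $T_1 P\Omega N$ and controlling $\ell(\gamma^0_{p,q})$ by $d(p,q)$, which is exactly what the paper leaves implicit.
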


Suppose that $p, q, r \in P \Omega N$ are close enough that there are minimal linear curves $\gamma^0_{p, q}$, $\gamma^0_{q, r}$ and $\gamma^0_{p, r}$.
Denote by $A(p, q, r)$ the sum of the three angles between $\gamma^0_{p, q}$, $\gamma^0_{q, r}$ and $\gamma^0_{p, r}$.
Since the sum of the inner angles of small geodesic triangles are close to $\pi$, Corollary \ref{cor:linear} implies that $A(p, q, r)$ is also close to $\pi$ when $p, q$ and $r$ are close enough. 
\begin{cor}
  Assume that $N$ is a compact Riemannian manifold.
  For any $\varepsilon > 0$,
  there is $\delta > 0$ such that
  $|A(p, q, r) -  \pi| < \varepsilon$
  for any $p, q, r \in P \Omega N$
  such that $d(p, q), d(p, r), d(q, r) \in (0, \delta)$.
  \label{cor:triangle}
\end{cor}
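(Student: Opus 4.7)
The plan is to compare the ``linear triangle'' built from $\gamma^0_{p,q}$, $\gamma^0_{q,r}$, $\gamma^0_{p,r}$ against the genuine geodesic triangle in $P\Omega N$ (with its Sasakian Riemannian structure) whose sides are the minimal geodesics $\gamma_{p,q}$, $\gamma_{q,r}$, $\gamma_{p,r}$. Denote the angle sum of the latter by $A^g(p,q,r)$. The strategy splits naturally into two parts: first show that $|A^g(p,q,r)-\pi|<\varepsilon/2$ once $\delta$ is small, and then show that the linear angle sum $A(p,q,r)$ differs from $A^g(p,q,r)$ by at most $\varepsilon/2$.

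For the first part I would invoke the classical fact that small geodesic triangles in a Riemannian manifold have angle sum close to $\pi$. Since $N$ is compact, so is $P\Omega N$, so its sectional curvatures are uniformly bounded. Pulling $p,q,r$ back to $T_p P\Omega N$ via $\exp_p^{-1}$, the sides $\gamma_{p,q}$ and $\gamma_{p,r}$ become exact line segments, while $\gamma_{q,r}$ becomes a smooth curve whose tangent vectors at $\exp_p^{-1}(q)$ and $\exp_p^{-1}(r)$ agree with those of $\gamma_{q,r}$ up to an $O(\delta^2)$ correction coming from Jacobi field estimates and the curvature bound. The Euclidean triangle with vertices $0,\exp_p^{-1}(q),\exp_p^{-1}(r)$ has angle sum exactly $\pi$, so $|A^g-\pi|=O(\delta^2)<\varepsilon/2$ for $\delta$ small.

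For the second part I would compare vertex by vertex. At $p$, the linear-triangle angle is the angle at $p$ between the initial velocities of $\gamma^0_{p,q}$ and $\gamma^0_{p,r}$, while the geodesic-triangle angle is the angle between those of $\gamma_{p,q}$ and $\gamma_{p,r}$. The triangle inequality for angles on the unit sphere of $T_p P\Omega N$ bounds the difference by the two deviation angles $\angle(\dot\gamma^0_{p,q}(0),\dot\gamma_{p,q}(0))$ and $\angle(\dot\gamma^0_{p,r}(0),\dot\gamma_{p,r}(0))$. By Corollary \ref{cor:linear} each of these can be made less than $\varepsilon/12$ by shrinking $\delta$; the same reasoning at $q$ and $r$ controls all six deviations, yielding $|A(p,q,r)-A^g(p,q,r)|<\varepsilon/2$. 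Combining with the first part gives $|A(p,q,r)-\pi|<\varepsilon$.

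The main obstacle is really bookkeeping rather than mathematics: one must choose a single $\delta$ so that, simultaneously, $d(p,q),d(p,r),d(q,r)$ stay below $inj(P\Omega N)$ (so that the Riemannian geodesic triangle is well-defined), the horizontal distances $d_h$ stay below $inj(N)$ (so that the linear curves $\gamma^0$ are defined and the hypothesis of Corollary \ref{cor:linear} holds), and the deviation bound from Corollary \ref{cor:linear} together with the $O(\delta^2)$ bound in the first part add to less than $\varepsilon$. All of this is achieved by a single uniform choice of $\delta$, using the compactness of $P\Omega N$ and the fact that $d_h\le d$.
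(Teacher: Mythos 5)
Your proposal is correct and follows exactly the route the paper indicates: compare the linear triangle to the true geodesic triangle in $P\Omega N$, use the standard fact that small Riemannian geodesic triangles have angle sum close to $\pi$, and use Corollary~\ref{cor:linear} to bound the per-vertex angle deviations between $\gamma^0_{\cdot,\cdot}$ and $\gamma_{\cdot,\cdot}$. The paper only sketches this in one sentence, and your write-up fills in the same two steps with explicit bookkeeping.
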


The following result follows from Proposition \ref{prop:close} and Corollary \ref{cor:linear}.
\begin{cor}
  Assume that $K$ is a compact smooth manifold and $N$ is a compact Riemannian manifold.
  Suppose that $G : K \times [a, b] \rightarrow P \Omega N$ is smooth
  and that each $G(s, \cdot)$ is a smooth curve whose speed is never $0$.
  For any $\varepsilon > 0$,
  there is $\delta > 0$ such that the angles between $G(s, \cdot)|_{[t_1, t_2]}$ and $\gamma^0_{G(s, t_1), G(s, t_2)}$ are smaller than $\varepsilon$
  whenever $|t_1 - t_2| < \delta$.
  \label{cor:linear2}
\end{cor}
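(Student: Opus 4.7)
The plan is to prove this by combining Proposition \ref{prop:close} and Corollary \ref{cor:linear} via a triangle-inequality argument for angles at a common endpoint.

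First, I would view $G$ as a smooth family of curves in the Riemannian manifold $P\Omega N$ equipped with the Sasakian metric. Since $N$ is compact, so is $P\Omega N$, and the speed $\bigl|\tfrac{\partial G}{\partial t}\bigr|$ is a continuous nowhere-zero function on the compact set $K \times [a,b]$, hence bounded below by some $c > 0$ and above by some $C < \infty$. Given $\varepsilon > 0$, applying Proposition \ref{prop:close} to $G$ (regarded as a smooth family of nowhere-stationary curves in $P\Omega N$) yields a $\delta_1 > 0$ such that the angle between $G(s,\cdot)|_{[t_1,t_2]}$ and the minimal geodesic $\gamma_{G(s,t_1),G(s,t_2)}$ in $P\Omega N$ is smaller than $\varepsilon/2$ whenever $|t_1 - t_2| < \delta_1$.

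Next, I would apply Corollary \ref{cor:linear} to obtain $\delta_2 > 0$ such that the angle between $\gamma_{p,q}$ and $\gamma^0_{p,q}$ is smaller than $\varepsilon/2$ for any $p,q \in P\Omega N$ with $0 < d(p,q) < \delta_2$. The Lipschitz bound $d(G(s,t_1),G(s,t_2)) \le C|t_1-t_2|$ shows there is a $\delta_3 > 0$ such that $d(G(s,t_1),G(s,t_2)) < \delta_2$ whenever $|t_1 - t_2| < \delta_3$. Setting $\delta = \min(\delta_1, \delta_3)$, and using that angles between curves sharing an endpoint satisfy the triangle inequality in the tangent sphere at that endpoint, we conclude that the angle between $G(s,\cdot)|_{[t_1,t_2]}$ and $\gamma^0_{G(s,t_1),G(s,t_2)}$ is bounded by $\varepsilon/2 + \varepsilon/2 = \varepsilon$ whenever $|t_1-t_2| < \delta$.

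There is no serious obstacle here; the statement is a compactness-plus-triangle-inequality glue between the two previously established results. The only mildly delicate point is ensuring that the hypothesis $d(p,q) < \min(\mathrm{inj}(P\Omega N), \mathrm{inj}(N))$ needed to define $\gamma^0_{p,q}$ and $\gamma_{p,q}$ is automatically achieved by shrinking $\delta$ further via the uniform Lipschitz bound on $G$; this uses compactness of $K \times [a,b]$ in an essential way, which is why the hypothesis that $K$ is compact is invoked.
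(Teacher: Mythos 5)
Your proposal is correct and matches the paper's (implicit) argument: the paper simply states that the corollary ``follows from Proposition \ref{prop:close} and Corollary \ref{cor:linear}'' without supplying details, and your triangle-inequality splicing at the common endpoints, together with the compactness/Lipschitz reduction to ensure $d(G(s,t_1),G(s,t_2))$ is small, is exactly the intended filling-in of that assertion.
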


\begin{proof}
  [Proof of Lemma \ref{lem:isotopy}]
  We shall assume that $G$ is smooth since it is standard to approximate
  continuous isotopies by smooth isotopies.

  Put $\varepsilon = 0.1$.
  By Corollary \ref{cor:linear2},
  there is $\delta_1 > 0$ such that
  the angles between $G(s, \cdot)|_{[t_1, t_2]}$ and
  $\gamma^0_{G(s, t_1), G(s, t_2)}$ are smaller than $\varepsilon$
  whenever $|t_1 - t_2| < \delta_1$.
  By Corollary \ref{cor:triangle}
  there is $\varepsilon_0 > 0$ such that
  \begin{align}
    |A(p, q, r) -  \pi| < \varepsilon
    \label{eq:pi}
  \end{align}
  for any $p, q, r \in P \Omega N$
  such that $d(p, q), d(p, r), d(q, r) \in (0, \varepsilon_0)$.
  Also, by corollary \ref{cor:linear},
  there is $\varepsilon_1 \in (0, \varepsilon_0)$ such that the angles between
  $\gamma^0_{p, q}$ and $\gamma_{p, q}$ are smaller than $\varepsilon$
  for any $p, q \in P \Omega N$
  such that $0 < d(p, q) < \varepsilon_1$.
  Since $G$ is continuous,
  there is $\delta_2 \in (0, \delta_1)$ such that
  $d(G(s, t_1), G(s, t_2)) < \varepsilon_1$ if $d(t_1, t_2) < \delta_2$.

  For each $(s, t) \in [0, 1] \times \sphere$,
  let $I(t) = \sphere \setminus (t-\delta_2, t+\delta_2)$,
  then $D(s, t) := d(G(s, t), G(s, I(t))) > 0$.
  Let $\varepsilon_2 = \min(\varepsilon_1, \inf_{(s, t) \in [0, 1] \times \sphere} D(s, t))$, then $\varepsilon_2 > 0$.
  Since $G$ is continuous,
  there is $\delta \in (0, \delta_2)$ such that
  $d(G(s, t_1), G(s, t_2)) < \frac{1}{2}\varepsilon_2$ if $d(t_1, t_2) < \delta$.

  Pick $n \in \NN$ such that $n \delta > 1$.
  Define $H : [0, 1] \times [0, 1] \times \sphere \rightarrow P \Omega N$ as
  \begin{align*}
    H(l, s, \frac{k + t}{n}) =
    \begin{cases}
      \gamma^0_{G(s, \frac{k}{n}), G(s, \frac{k+l}{n})}(\frac{t}{l})
      & \text{if $0 \le t < l$}\\
      G(s, \frac{k + t}{n})
      & \text{if $l \le t \le 1$}
    \end{cases}
  \end{align*}
  where $k \in \integer / n \integer$,
  and $t \in [0, 1]$.
  It is obvious that $H(0, \cdot, \cdot) = G$ and $H(1, \cdot, \cdot)$
  is an isotopy of piecewise linear knots.
  We shall show
  that each $H(l, s, \cdot)$ is a knot embedded in $P \Omega N$.

  Suppose that $H(l, s, \cdot)$ has a self-intersection,
  that is,
  $H(l, s, \frac{k_1 + t_1}{n}) = H(l, s, \frac{k_2 + t_2}{n})$
  for some
  $k_i \in \integer /n \integer$ and
  $t_i \in [0, 1)$ such that
  $k_1 \neq k_2$ or $t_1 \neq t_2$.

  Since $G(s, \cdot)$ is an embedding,
  we have either $t_1 < l$ or $t_2 < l$.
  Without loss of generality,
  assume that $t_1 < l$.
  Write $t_3 = \frac{k_1 + t_1}{n}$ and $t_4 = \frac{k_2 + t_2}{n}$.
  We shall show that
  $\gamma^0_{G(s, \frac{k_1}{n}), G(s, \frac{k_1+l}{n})}$
  is in $N_{\frac{\varepsilon_2}{2}}(G(s, \frac{k_1}{n}))$,
  the ball of radius $\frac{\varepsilon_2}{2}$ centered at $G(s, \frac{k_1}{n})$.
  Put $p = G(s, \frac{k_1}{n})$ and $q = G(s, \frac{k_1+l}{n})$.
  Since $d(\frac{k_1}{n}, \frac{k_1+l}{n}) < \delta < \frac{1}{3}\delta_2$,
  $d(p, q) < \varepsilon_1$.
  Define $Q : [0, 1] \rightarrow \real$ as
  \begin{align*}
    Q(t) = \frac{\frac{\partial}{\partial t} d(p, \gamma^0_{p, q}(t))}{\frac{\partial}{\partial t} \ell(\gamma^0_{p, q}|_{[0, t]})}
  \end{align*}
  By the first variation of arc length,
  $Q(t) = \cos \theta(t)$
  where $\theta(t)$ is the angle between $\gamma^0_{p, q}|_{[0, t]}$
  and the minimal geodesic connecting their end points
  at the end point $\gamma^0_{p, q}(t)$.
  Since $d(p, q) < \frac{1}{2}\varepsilon_2 < \varepsilon_1$,
  $\theta(t) < \varepsilon = 0.01$,
  and hence
  $Q(1) > 0$.
  If $Q(t) = 0$ for some $t \in [0, 1)$,
  then let $t_0 = \sup\{t \in [0, 1) : Q(t) = 0\}$.
  Then we have $Q(t_0) = 0$ and $Q(t) > 0$ for $t \in (t_0, 1]$.
  Since
  \begin{align*}
    d(p, \gamma^0_{p, q}(t_0)) &= d(p, q) - \int_{t_0}^1\frac{\partial}{\partial t} d(p, \gamma^0_{p, q}(t))dt\\
    &= d(p, q) - \int_{t_0}^1\frac{\frac{\partial}{\partial t} \ell(\gamma_{p, q}|_{[0, t]})}{Q(t)}dt\\
    &< d(p, q)\\
    &< \varepsilon_1,
  \end{align*}
  $\theta(t_0) < \varepsilon$,
  and hence $Q(t_0) > 0$,
  which contradict our assumption that $Q(t_0) = 0$.
  So $Q(t) > 0$ for any $t \in [0, 1]$,
  which implies that $d(p, \gamma^0_{p, q}(t))$
  is strictly increasing.
  Hence $d(p, \gamma^0_{p, q}(t)) \le d(p, q) < \frac{\varepsilon_2}{2}$ for all $t \in [0, 1)$,
  that is, $\gamma^0_{G(s, \frac{k_1 + l}{n}), G(s, \frac{k_1}{n})}$ is in $N_{\frac{\varepsilon_2}{2}}(p)$.
  It also follows that $\gamma^0_{G(s, \frac{k_1 + l}{n}), G(s, \frac{k_1}{n})}$ has no self-intersections,
  and thus $k_1 \neq k_2$.

  Assume that $d(\frac{k_1}{n}, \frac{k_2}{n}) \ge \delta_2$.
  Then $G(s, \frac{k_2}{n}) \notin N_{{\varepsilon_2}}(G(s, \frac{k_1}{n}))$.
  Hence we have $N_{\frac{\varepsilon_2}{2}}(G(s, \frac{k_1}{n})) \bigcap N_{\frac{\varepsilon_2}{2}}(G(s, \frac{k_2}{n})) = \emptyset$.
  When $t_2 \le l$,
  $H(l ,s, t_4)$ is on $\gamma^0_{G(s, \frac{k_2}{n}), G(s, \frac{k_2 + l}{n})}$,
  which is contained in $N_{\frac{\varepsilon_2}{2}}(G(s, \frac{k_2}{n}))$,
  and hence $H(l ,s, t_4) \in N_{\frac{\varepsilon_2}{2}}(G(s, \frac{k_2}{n}))$.
  However, for the same reason,
  $H(l, s, t_3) \in N_{\frac{\varepsilon_2}{2}}(G(s, \frac{k_1}{n}))$,
  and hence $H(l, s, t_3) \neq H(l, s, t_4)$, which contradicts our assumption.
  When $t_2 \ge l$,
  $H(l, s, t_4) = G(s, t_4)$.
  Since $d(\frac{k_2}{n}, t_4) = \frac{t}{n} < \frac{1}{n} < \delta$,
  $d(G(s, \frac{k_2}{n}), G(s, t_4)) < \frac{\varepsilon_2}{2}$,
  and hence $H(l ,s, t_4) \in N_{\frac{\varepsilon_2}{2}}(G(s, \frac{k_2}{n}))$.
  Again, $H(l, s, t_3) \neq H(l, s, t_4)$, which contradicts our assumption.
  Therefore,
  $d(\frac{k_1}{n}, \frac{k_2}{n}) < \delta_2$.
  We shall assume that $\frac{k_2}{n} \in (\frac{k_1}{n}, \frac{k_1}{n} + \delta_2)$, and the other case ($\frac{k_1}{n} \in (\frac{k_2}{n}, \frac{k_2}{n} + \delta_2)$) can be addressed similarly.
  Then we have $d(H(l, s, \frac{k_1+l}{n}), H(l, s, \frac{k_2}{n})) < \varepsilon_2$.

  We shall show that $d(G(s, \frac{k_2}{n}), H(l, s, t_4)) < \varepsilon_2$.
  If $t_2 \ge l$, then $H(l, s, t_4) = G(s, t_4)$.
  We have $d(G(s, \frac{k_2}{n}), G(s, t_4)) < \varepsilon_2$
  since $d(\frac{k_2}{n}, t_4) < \delta$.
  If $t_2 < l$, then $d(G(s, \frac{k_2}{n}), H(l, s, t_4))
  < d(G(s, \frac{k_2}{n}), H(l, s, \frac{k_2+l}{n})) < \varepsilon_2$.
  Using the same argument, we have
  $d(G(s, \frac{k_1+l}{n}), H(l, s, t_3)) < \varepsilon_2$.

  Write $p_1 = H(l, s, \frac{k_1 + l}{n})$, $p_2 = H(l, s, \frac{k_2}{n})$ and $p_3 = H(l, s, t_3) = H(l, s, t_4)$.
  Then the angle between $\gamma^0_{p_i, p_j}$ and
  $G(s, \cdot)$ is at most $\varepsilon$ for any $i \neq j$.
  Hence the angle between $\gamma^0_{p_1, p_2}$ and $\gamma^0_{p_1, p_3}$
  and the angle between $\gamma^0_{p_1, p_2}$ and $\gamma^0_{p_2, p_3}$
  are at least $\pi - 2\varepsilon$.
  Hence $A(p_1, p_2, p_3) > 2 \pi - 4 \varepsilon$,
  which contradicts \eqref{eq:pi}.

  This completes the proof of Lemma \ref{lem:isotopy}.
\end{proof}

\bibliography{mybib}{}
\bibliographystyle{amsplain}
\end{document}